\newcommand{\inv}{^{-1}}
\newcommand{\Aut}{{\rm Aut}}
\title{The smallest nontrivial snarks of oddness 4}
\author{Jan Goedgebeur\thanks{Supported by a Postdoctoral Fellowship
of the Research Foundation Flanders (FWO).}\\
\small Department of Applied Mathematics, Computer Science and Statistics\\[-0.8ex]
\small Ghent University\\[-0.8ex]
\small 9000 Ghent, Belgium\\
\small Computer Science Department\\[-0.8ex]
\small University of Mons\\[-0.8ex]
\small 7000 Mons, Belgium\\[-0.8ex]
\small\tt jan.goedgebeur@ugent.be\\
\\
Edita M\' a\v cajov\' a\thanks{Partially supported by VEGA 1/0876/16, VEGA 1/0813/18, and by
APVV-15-0220.}\\
\small Department of Computer Science\\[-0.8ex]
\small Comenius University\\[-0.8ex]
\small 842 48 Bratislava, Slovakia\\[-0.8ex]
\small\tt macajova@dcs.fmph.uniba.sk\\
\\
Martin \v Skoviera$^{\dag}$\\
\small Department of Computer Science\\[-0.8ex]
\small Comenius University\\[-0.8ex]
\small 842 48 Bratislava, Slovakia\\[-0.8ex]
\small\tt skoviera@dcs.fmph.uniba.sk\\
\\
}
\begin{document}

\maketitle


\begin{abstract}
The oddness of a cubic graph is the smallest number of odd
circuits in a 2-factor of the graph. This invariant is widely
considered to be one of the most important measures of
uncolourability of cubic graphs and as such has been repeatedly
reoccurring in numerous investigations of problems and
conjectures surrounding snarks (connected cubic graphs
admitting no proper $3$-edge-colouring). In [Ars Math.\
Contemp.\ 16 (2019), 277--298] we have proved that the smallest
number of vertices of a snark with cyclic connectivity 4 and
oddness 4 is 44. We now show that there are exactly 31 such
snarks, all of them having girth 5. These snarks are built up
from subgraphs of the Petersen graph and a small number of
additional vertices. Depending on their structure they fall
into six classes, each class giving rise to an infinite family
of snarks with oddness at least $4$ with increasing order. We
explain the reasons why these snarks have oddness 4 and prove
that the 31 snarks form the complete set of snarks with cyclic
connectivity 4 and oddness 4 on 44 vertices. The proof is a
combination of a purely theoretical approach with extensive
computations performed by a computer.
\end{abstract}


\section{Introduction}

This paper is a sequel of~\cite{GMS1} where we have proved that
the smallest number of vertices of a snark -- a connected cubic
graph whose edges cannot be properly coloured with three
colours -- which has cyclic connectivity $4$ and oddness at
least $4$ is $44$. The purpose of the present paper is to show
that there are precisely 31 such snarks, all of them having
oddness exactly~$4$, resistance $3$, and girth $5$. Together
with~\cite{GMS1}, this paper provides a partial answer to the
following question posed in \cite[Problem~2]{BrGHM}, leaving
open the existence of cyclically $5$-edge-connected snarks of
oddness at least $4$ on fewer than 44 vertices:
\medskip

\noindent\textbf{Problem~\cite{BrGHM}.} Which is the smallest
snark (with cyclic connectivity $\ge 4$ and girth $\ge 5$) of
oddness strictly greater than 2?

\medskip

The \textit{oddness} of a bridgeless cubic graph $G$ is the
smallest number of odd circuits in a $2$-factor of $G$, and the
\textit{resistance} of $G$ is the smallest number of vertices
(or edges) of $G$ whose removal yields a $3$-edge-colourable
graph. Both invariants are important measures of
uncolourability of cubic graphs and have been investigated by
numerous authors~\cite{Al, FMS-survey, HG, Hagglund, HK, LiSt,
Steffen:meas}. One of the reasons why these invariants have
recently received so much attention resides in the fact that
snarks with large resistance or oddness may provide potential
counterexamples to several profound conjectures such as the
cycle double cover conjecture, the 5-flow conjecture, and
others~\cite{HG, HK, J}.

The set $\mathcal{M}$ of all $31$ snarks of order 44, cyclic
connectivity $4$, and oddness $4$ has been constructed with the
help of a computer. A detailed description of its members
appears in Section~\ref{sec:31} where we also give a
computer-free proof that each of them has oddness at least $4$.
The equality can be established easily by specifying a
$2$-factor containing four odd circuits, which can be checked
directly.

The snarks constituting the set $\mathcal{M}$ fall into six
classes depending on their structure. A remarkable feature of
the set is that all of them are built up from subgraphs of the
Petersen graph and a small number of additional vertices.
The verification that $\mathcal{M}$ is a complete set of
snarks of order 44, cyclic connectivity~$4$, and oddness at
least $4$ combines purely mathematical considerations with
extensive computations; the proof can be found in
Section~\ref{sec:complete}. Its mathematical part is
essentially a mixture of edge-colouring and cyclic connectivity
arguments. In the computational part we perform an operation
that takes two cyclically $4$-edge-connected snarks $G_1$ and
$G_2$ of order at most 36 and creates from them -- in all
possible ways -- a cubic graph by either removing two adjacent
vertices or two nonadjacent edges, and connecting the resulting
$2$-valent vertices in $G_1$ with those in $G_2$. In total,
more than $2 \times 10^{13}$ graphs have been constructed and
checked for oddness~4. The entire computational effort
for this project amounts to 25 CPU years.

In Section~\ref{sect:properties} we analyse a sample of
887\,152 cyclically $4$-edge-connected snarks of oddness $4$
whose orders range from 46 to 52, as well as 872 snarks of
oddness $4$ with lower connectivity constructed in~\cite{G}.
We evaluate various invariants for them such as resistance,
perfect matching index, circumference, and others. The purpose
of this investigation is to provide grounds for possible
prediction of certain properties that snarks with higher
oddness might have in general.

We conclude this paper with several open problems. At the
end we append the adjacency lists of all 31 snarks constituting
the set $\mathcal{M}$.

\section{Preliminaries}\label{sec:prelim}
This section collects the most basic definitions and notation
needed for understanding the present paper. For a more detailed
introduction to the topic we refer the reader to our preceding
paper~\cite{GMS1}.

\medskip\noindent\textbf{2.1. Graphs and multipoles.}
All graphs in this paper are finite and for the most part
simple. However, for the sake of completeness we have to permit
graphs containing multiple edges or loops, although these
features will be usually excluded by the imposed connectivity
or colouring restrictions. For a graph $G$ and a subgraph
$H\subseteq G$ we let $|G|$ denote the number of vertices of
$G$, and $G[H]$ the subgraph of $G$ induced by the vertex set
of $H$.

Throughout this paper we use multipoles as a convenient tool
for constructing graphs. Every edge of a multipole has two ends
and each end can, but need not, be incident with a vertex. An
edge which has one end incident with a vertex and the other not
is called a \textit{dangling edge}, and if neither end of an
edge is incident with a vertex, it is called an
\textit{isolated edge}. An end of an edge that is not incident
with a vertex is called a \textit{semiedge}. A multipole with
$k$ semiedges is called a \textit{$k$-pole}. Two semiedges $s$
and $t$ of a multipole can be joined to produce an edge $s*t$
connecting the end-vertices of the corresponding dangling
edges. Given two $k$-poles $M$ and $N$ with semiedges $s_1,
\ldots, s_k$ and $t_1,\ldots, t_k$, respectively, we define
their \textit{complete junction} $M*N$ to be the graph obtained
by performing the junctions $s_i*t_i$ for each $i\in\{1,\ldots,
k\}$. A \textit{partial junction} is defined in a similar way
except that a proper subset of semiedges of $M$ is joined to
semiedges of $N$. Partial junctions can be used to construct
larger multipoles from smaller ones. In either case, whenever a
junction of two multipoles is to be performed, we assume that
their semiedges are assigned a fixed linear order.

Semiedges in multipoles are often grouped into pairwise
disjoint sets, called \textit{connectors}.  The \textit{size}
of a connector is the number of its semiedges. A connector of
size $n$ is often referred to as an \textit{$n$-connector}. An
$(n_1, n_2,\ldots, n_k)$-pole is a multipole with
$n_1+n_2+\cdots + n_k$ semiedges which are distributed into $k$
connectors $S_1,S_2,\ldots, S_k$ such that the connector $S_i$
is of size $n_i$. A multipole with two connectors is also
called a \textit{dipole}.

Let $D'(S_1',S_2')$ and $D''(S_1'',S_2'')$ be two dipoles with
connectors $S_1'$, $S_2'$ and $S_1''$, $S_2''$, respectively.
If $|S_1'|=|S_2|$, and each of these two connectors is endowed
with a linear order, we can construct a new dipole
$$D(S_1,S_2)=D'(S_1',S_2')\circ D''(S_1'',S_2'')$$
with connectors $S_1=S_1'$ and $S_2=S_2''$ by performing the
junctions of the semiedges from $S_2'$ with those $S_1''$ with
respect to the corresponding orderings. The resulting dipole is
called the \textit{junction} of $D'(S_1',S_2')$ and
$D''(S_1'',S_2'')$.

\medskip\noindent\textbf{2.2. Cyclic connectivity.}
Let $G$ be a connected graph. An \textit{edge-cut} of a
graph~$G$ is any set $S$ of edges of $G$ such that $G-S$ is
disconnected. For example, if $H$ a proper subset of vertices
or induced subgraph of $G$, then the set $\delta_G(H)$ of all
edges with exactly one end in $H$ is an edge-cut in $G$. An
edge-cut $S$ is said to be \textit{cycle-separating} if at
least two components of $G-S$ contain cycles. We say that a
connected graph $G$ is \textit{cyclically $k$-edge-connected}
if no set of fewer than $k$ edges is cycle-separating in $G$.
Let $\beta(G)=|E(G)|-|V(G)|+1$ denote the cycle rank of $G$.
The \textit{cyclic connectivity} of $G$, denoted by $\zeta(G)$,
is the largest number $k\le\beta(G)$ for which $G$ is
cyclically $k$-connected (cf.~\cite{NS:cc,R}). It is not
difficult to see that $\zeta(G)=\beta(G)$ if and only if $G$
any two curcuits of $G$ have a vertex in common. For cubic
graphs this can happen only when $G$ is the complete bipartite
graph $K_{3,3}$, the complete graph $K_4$ on four vertices, or
the graph consisting of two vertices and three parallel edges
joining them.

For a cubic graph $G$ with $\zeta(G)\le 3$, the value
$\zeta(G)$ coincides with the usual vertex-connectivity and
edge-connectivity of $G$, so cyclic connectivity provides a
natural extension of the classical connectivity parameters for
cubic graphs.  Another useful observation is that the value of
cyclic connectivity remains invariant under subdivisions and
adjoining new vertices of degree~$1$. In particular,
homeomorphic graphs have the same value of cyclic connectivity.

For edge-cuts that separate an acyclic component from the rest
of the graph we have the following easy but useful observation.

\begin{lemma}\label{lemma:acyclic}
A connected acyclic $k$-pole has $k-2$ vertices.
\end{lemma}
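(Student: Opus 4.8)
The plan is to argue by induction on the number of vertices, using the fact that a connected acyclic multipole is, combinatorially, a tree with some pendant semiedges attached. First I would set up the base case: a connected acyclic $k$-pole with no vertices at all is a single isolated edge, which contributes $2$ semiedges, so $k=2$ and indeed $k-2=0$ vertices, as required. (One should check this is the only vertex-free connected acyclic multipole — a connected multipole with no vertices and more than one edge would force the edges to share ends at vertices, a contradiction, or produce several components.)

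For the inductive step, suppose $M$ is a connected acyclic $k$-pole with at least one vertex. Since $M$ is acyclic and each vertex has degree (counting semiedges) equal to $3$ in the cubic setting — but more robustly, since the underlying graph obtained by suppressing semiedges is a tree — there exists a \emph{leaf} vertex $v$, i.e.\ a vertex incident with exactly one edge leading to the rest of $M$; all its other incident ends are semiedges. Delete $v$: this removes one vertex, removes the edges that were dangling or isolated at $v$, and turns the one edge joining $v$ to $M-v$ into a new semiedge. If $v$ had degree $d$ in the underlying graph-with-semiedges (so $d-1$ of its incident ends were semiedges, assuming $v$ is a genuine leaf with one "real" edge), then $M-v$ is a connected acyclic $k'$-pole with $k' = k - (d-1) + 1 = k - d + 2$ semiedges and one fewer vertex. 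By induction $M-v$ has $k'-2 = k-d$ vertices, so $M$ has $k-d+1$ vertices — which matches $k-2$ precisely when $d=3$, i.e.\ in the cubic case.

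Since all multipoles relevant to this paper arise as sub-multipoles of cubic graphs, every vertex has degree exactly $3$, so the count closes: $M$ has $(k-2)$ vertices. Alternatively, and perhaps more cleanly, I would avoid induction entirely with a double-counting argument in the cubic case: if $M$ has $n$ vertices and $k$ semiedges, then summing degrees gives $3n = 2\cdot(\text{number of edges with both ends in }M) + k$, while acyclicity and connectivity of the underlying graph on $n$ vertices force the number of internal edges to be exactly $n-1$. Substituting yields $3n = 2(n-1) + k$, hence $n = k-2$.

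The main obstacle is mostly a matter of bookkeeping rather than genuine difficulty: one must be careful about the convention for counting isolated edges and about whether "acyclic" is meant in the underlying simple graph or allows the degenerate vertex-free configurations. Once the degree-$3$ condition is made explicit (inherited from the ambient cubic graphs, as is standard throughout the paper), the double-counting identity $3n = 2(n-1)+k$ is immediate and the lemma follows in one line; the only thing to verify with care is that a connected acyclic multipole on $n$ vertices really does have exactly $n-1$ internal edges, which is just the standard fact that a connected acyclic graph is a tree.
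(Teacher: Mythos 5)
Your argument is correct: the paper states this lemma without proof (as an ``easy but useful observation''), and your double-counting identity $3n=2(n-1)+k$, valid because every vertex of a multipole arising from a cubic graph has degree $3$ and a connected acyclic multipole on $n\ge 1$ vertices has exactly $n-1$ internal edges and no isolated edges, is exactly the standard one-line justification the authors had in mind. Your care with the vertex-free case ($k=2$, a single isolated edge) and with the degree-$3$ convention is appropriate; the inductive version is also fine but unnecessary given the counting argument.
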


\medskip\noindent\textbf{2.3. Edge-colourings.}
A \textit{$3$-edge-colouring} of a graph $G$ is a mapping
$\varphi\colon E(G)\to\{1,2,3\}$ such that adjacent edges
receive distinct colours; the same definition applies to
multipoles. A graph or a multipole which admits a
$3$-edge-colouring will be called \textit{colourable},
otherwise it will be called \textit{uncolourable}. A
$2$-connected uncolourable cubic graph is called a
\textit{snark}. A snark is \textit{nontrivial} if it is
cyclically $4$-edge-connected and has girth at least $5$.

In the study of colourings of cubic graphs it is often
convenient to take the colours to be the non-zero elements of
the group $\mathbb{Z}_2\times\mathbb{Z}_2$, because in this
case 3-edge-colourings correspond to nowhere-zero
$\mathbb{Z}_2\times\mathbb{Z}_2$-flows. We identify the colours
$(0,1)$, $(1,0)$, and $(1,1)$ with $1$, $2$, and $3$,
respectively.

The following well known lemma -- in fact, an immediate
consequence of flow continuity -- is a fundamental tool in the
study of snarks.

\begin{theorem}\label{lemma:parity} {\rm (Parity Lemma)}
Let $M$ be a $k$-pole endowed with a proper $3$-edge-colouring
with colours $1$, $2$, and $3$. If the set of all semiedges of
$M$ contains $k_i$ edges of colour $i$ for $i\in\{1,2,3\}$,
then
$$
k_1\equiv k_2\equiv k_3\equiv k\pmod 2.
$$
\end{theorem}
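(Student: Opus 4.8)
The plan is to reduce the whole statement to a single parity count carried out separately for each colour. Fix a colour $i\in\{1,2,3\}$. Since the colouring is proper and every vertex of $M$ has degree three, each vertex of $M$ is incident with exactly one edge-end of colour $i$. I would then sort the edges coloured $i$ into three types according to how many of their ends sit at vertices: a \emph{proper} edge of colour $i$ has both ends at vertices and hence accounts for two vertices through their (unique) colour-$i$ ends; a \emph{dangling} edge of colour $i$ accounts for exactly one vertex; and an \emph{isolated} edge of colour $i$ accounts for none. Writing $p_i$, $d_i$, $s_i$ for the numbers of proper, dangling, and isolated edges of colour $i$, counting the vertices of $M$ through their colour-$i$ ends gives $|V(M)| = 2p_i + d_i$, while counting the semiedges of colour $i$ gives $k_i = d_i + 2s_i$. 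Hence $k_i \equiv d_i \equiv |V(M)| \pmod 2$.

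Applying this for $i=1,2,3$ immediately yields $k_1 \equiv k_2 \equiv k_3 \equiv |V(M)| \pmod 2$. Finally, since $k = k_1+k_2+k_3$, we get $k \equiv 3|V(M)| \equiv |V(M)| \pmod 2$, so $k$ is congruent to the common value as well, which is exactly the assertion. (An equivalent route, more in line with the flow interpretation recalled above, is to sum the colours $\bigl((0,1),(1,0),(1,1)\bigr)$ over all vertex-incident edge-ends: at each vertex the three ends carry the three distinct non-zero elements of $\mathbb{Z}_2\times\mathbb{Z}_2$, whose sum is $0$, so the total is $0$; on the other hand every proper and every isolated edge contributes its colour twice, so the total equals $k_1(0,1)+k_2(1,0)+k_3(1,1)$. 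Reading off the two coordinates gives $k_2+k_3\equiv 0$ and $k_1+k_3\equiv 0 \pmod 2$, hence $k_1\equiv k_2\equiv k_3$, and then $k\equiv k_1 \pmod 2$.)

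There is no genuinely hard step here; the only point requiring care is the bookkeeping for dangling and isolated edges. Because the multipoles in this paper really do admit semiedges that are not attached to any vertex, one cannot pretend that every semiedge sits at a vertex, so the three edge-types must be separated as above. Once that is done the congruences drop out with no further work.
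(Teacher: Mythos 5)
Your proof is correct. The paper itself gives no proof of the Parity Lemma; it only records it as ``an immediate consequence of flow continuity,'' and your parenthetical second argument --- summing the colours, viewed as the nonzero elements of $\mathbb{Z}_2\times\mathbb{Z}_2$, over edge-ends and using that the three ends at each cubic vertex sum to zero --- is precisely that flow-continuity argument, while your primary double count ($|V(M)|=2p_i+d_i$ and $k_i=d_i+2s_i$, hence $k_i\equiv|V(M)|\pmod 2$ for each $i$) is an equivalent elementary version. The bookkeeping for dangling and isolated edges, which is the only place one could slip, is handled correctly; the sole implicit hypothesis you rely on is that the $k$-pole is cubic, which is the standing convention for multipoles in this paper.
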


In this paper we study snarks that are far from being
$3$-edge-colourable. Two measures of uncolourability are
relevant for this paper. The \emph{oddness} $\omega(G)$ of a
bridgeless cubic graph $G$ is the smallest number of odd
circuits in a 2-factor of $G$. The \emph{resistance} $\rho(G)$
of a cubic graph $G$ is the smallest number of vertices of $G$
which have to be removed in order to obtain a colourable graph.
Somewhat surprisingly, the required number of vertices to be
deleted is the same as the number of edges that have to be
deleted in order to get a $3$-edge-colourable graph (see
\cite[Theorem~2.7]{Steffen:class}). In fact, in many cases it
is more convenient to delete edges rather than vertices.

Obviously, if $G$ is colourable, then $\omega(G)=\rho(G)=0$. If
$G$ is uncolourable, then both $\omega(G)\ge 2$ and $\rho(G)\ge
2$. Observe that for every bridgeless cubic graph $G$ we have
$\rho(G) \le \omega(G)$ since deleting one edge from each odd
circuit in a $2$-factor leaves a colourable graph. On the other
hand, the Parity Lemma implies that $\rho(G)$ never equals $1$,
which together with a standard Kempe chain recolouring argument
yields that $\rho(G)=2$ if and only if $\omega(G)=2$
\cite[Lemma~2.5]{Steffen:class}. The difference between
$\omega(G)$ and $\rho(G)$ can be arbitrarily large in
general~\cite{Steffen:meas}, nevertheless, resistance can serve
as a convenient lower bound for oddness because it is somewhat
easier to handle.

By the Parity Lemma, every colouring of a $4$-pole has one of
the following types: $1111$, $1122$, $1212$, and $1221$ (for a
precise definition of the type of a colouring see~\cite{GMS1}).
Observe that every colourable $4$-pole admits at least two
different types of colourings. Indeed, we can start with any
colouring and switch the colours along an arbitrary Kempe chain
to obtain a colouring of another type. Colourable $4$-poles
thus can have two, three, or four different types of
colourings. Those attaining exactly two types are particularly
important for the study of snarks; we call them
\textit{colour-open} $4$-poles, as opposed to
\textit{colour-closed} multipoles discussed in more detail
in~\cite{NS:dec}.

There are two types of colour-open $4$-poles. A $4$-pole $M$
will be called \textit{isochromatic} if its semiedges can be
partitioned into two pairs such that in every colouring of $M$
the semiedges within each pair receive the same colour. A
$4$-pole $M$ will be called \textit{heterochromatic} of its
semiedges can be partitioned into two pairs such that in every
colouring of $M$ the semiedges within each pair receive
distinct colours. Typical examples of isochromatic and
heterochromatic $4$-poles are depicted in Figure~\ref{fig:I} and
Figure~\ref{fig:H}, respectively.

The adjectives ``isochromatic" and ``heterochromatic" can be
similarly applied to graphs: a subgraph of a cubic graph will
be called \textit{isochromatic} if attaching a dangling edge to
every $2$-valent vertex produces an isochromatic $4$-pole; a
\textit{heterochromatic} subgraph is defined similarly.

\section{The 31 snarks}\label{sec:31}
Let $\mathcal{S}_{36}$ denote the set of all cyclically
4-edge-connected snarks with at most 36 vertices; as mentioned
in~\cite{GMS1}, the set $\mathcal{S}_{36}$ consists of
$432~105~682$ nonisomorphic graphs. For any two snarks $G_1$
and $G_2$ from $\mathcal{S}_{36}$ let us apply the following
operation:
\begin{itemize}
\item From each $G_i$ form a $4$-pole $M_i$ by either
    removing two adjacent vertices or two nonadjacent edges
    and by retaining the dangling edges.
\item Construct a cubic graph $G$ by identifying the
    semiedges of $M_1$ with those of $M_2$ after possibly
    applying a permutation to the  semiedges of $M_1$ or
    $M_2$.
\end{itemize}
The resulting graph $G$ will be called a \textit{$4$-join} of
$G_1$ and $G_2$. Define $\mathcal{M}$ to be the set all
pairwise nonisomorphic cyclically $4$-edge-connected snarks of
order $44$ with oddness at least $4$ that can be expressed as a
$4$-join of two not necessarily distinct graphs from
$\mathcal{S}_{36}$.

We have implemented a program which applies a $4$-join in all
possible ways to any two input graphs; see~\cite{GMS1} for more
details concerning the program. We have applied this program in
all possible ways to every pair of graphs from
$\mathcal{S}_{36}$ that lead to a graph on 44 vertices, and
then tested which of the constructed graphs have oddness at
least 4. This computation took approximately 2 CPU years on a
cluster consisting of Intel Xeon E5-2660 CPU's at 2.60GHz and
produced $31$ snarks with oddness exactly $4$.

\begin{observation} \label{obs:31_snarks}
The set $\mathcal{M}$ consists of exactly $31$ snarks, each of
them having oddness exactly $4$ and girth $5$.
\end{observation}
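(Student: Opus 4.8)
The plan is to obtain the statement essentially as a corollary of the computation described just above, with only a handful of elementary supporting remarks. First I would observe that $\mathcal{M}$ is a well-defined finite set which can be enumerated effectively. By \cite{GMS1} the set $\mathcal{S}_{36}$ is explicitly known and consists of $432\,105\,682$ graphs; moreover, a $4$-join of two members $G_1,G_2\in\mathcal{S}_{36}$ that yields a graph of order $44$ forces $|G_1|+|G_2|\in\{44,46,48\}$, according to whether each of the two $4$-poles is produced by deleting two nonadjacent edges or two adjacent vertices. For each of the finitely many admissible pairs there are only finitely many choices of the deleted edges or vertices and of the bijection used to identify the two sets of semiedges. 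Running the $4$-join program of \cite{GMS1} over all of these choices, discarding the outputs that fail to be cyclically $4$-edge-connected snarks, testing the rest for oddness at least $4$, and reducing the survivors modulo isomorphism therefore computes $\mathcal{M}$; the program returns exactly $31$ graphs.

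It then remains to verify that each $G\in\mathcal{M}$ has oddness exactly $4$ and girth $5$ (membership in $\mathcal{M}$ already guarantees that $G$ is a cyclically $4$-edge-connected snark, and in particular uncolourable, since $\omega(G)\ge 4>0$). For the oddness, the key point is that $|G|=44$ is even, so every $2$-factor of $G$ contains an even number of odd circuits and hence $\omega(G)\in\{4,6,8,\dots\}$. Consequently it suffices to exhibit, for each of the $31$ graphs, a single $2$-factor containing exactly four odd circuits; such a $2$-factor can be extracted from the structural description given in Section~\ref{sec:31} (or located in the adjacency lists in the appendix) and checked directly, which gives $\omega(G)\le 4$ and therefore $\omega(G)=4$.

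For the girth, I would first note that $G$ is triangle-free: a triangle in a cubic graph on $44$ vertices has exactly three edges leaving it, and these three edges form a cycle-separating edge-cut, contradicting cyclic $4$-edge-connectivity. The absence of $4$-circuits --- and hence the fact that the girth is exactly $5$, a $5$-circuit being present in every case --- is confirmed by direct inspection of the explicit graphs, or, equivalently, as part of the computer search.

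I expect the only genuine obstacle to be the reliability of the computation: the statement is as trustworthy as the exhaustiveness of the $4$-join enumeration and the correctness of the oddness and cyclic-connectivity tests. The strictly mathematical content needed here --- finiteness of the search space, parity of the number of odd circuits in a $2$-factor of an even-order cubic graph, and the triangle-free argument --- is entirely routine, so the proof reduces to carrying out (and, where possible, independently re-verifying) the computations, exactly as elsewhere in \cite{GMS1} and in this paper.
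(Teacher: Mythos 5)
Your proposal is correct and follows essentially the same route as the paper: Observation~\ref{obs:31_snarks} is established by the exhaustive $4$-join computation over $\mathcal{S}_{36}$, with the upper bound $\omega(G)\le 4$ obtained by exhibiting a $2$-factor with four odd circuits and the girth checked directly. Your supplementary remarks (parity of the number of odd circuits in a $2$-factor, the triangle exclusion via cyclic $4$-edge-connectivity, and the need to verify the absence of $4$-circuits by inspection) are all sound and consistent with what the paper does.
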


In the remainder of this section we describe the 31 snarks in
detail and provide a computer-free proof that each of them has
oddness at least $4$. Their adjacency lists, displayed in the
order as they were generated, can be found in Appendix. Graph
number 28 is illustrated in Figure~\ref{fig:graph28}, two more
of the 31 graphs are depicted in Figure~1 and Figure~4 in our
previous paper~\cite{GMS1} which represent graphs number 15 and
17, respectively.

\begin{figure}[htbp]
	\centering
	\includegraphics[width=0.4\textwidth]{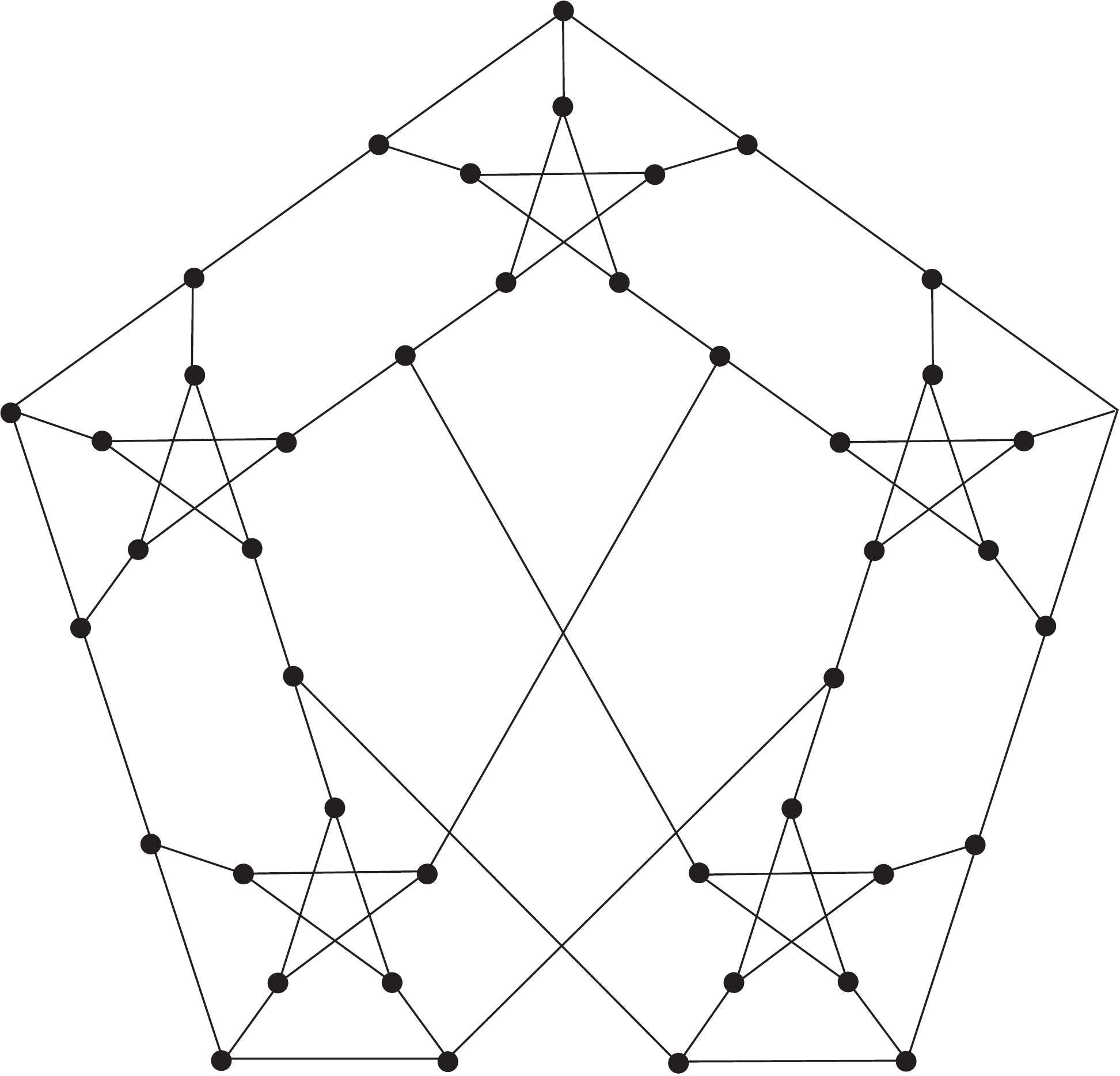}
  \caption{Graph number 28 of the 31 graphs in $\mathcal{M}$}
  \label{fig:graph28}
\end{figure}

\subsection*{Building blocks}
The basic building blocks of all snarks constituting the set
$\mathcal{M}$ are five multipoles $\mathbf{I}$, $\mathbf{H}_1$,
$\mathbf{H}_2$, $\mathbf{T}$, and $\mathbf{N}$, described
below, all of them arising from the Petersen graph by removing
vertices or severing edges. Every individual member of
$\mathcal{M}$ may have a small number of additional vertices
not belonging to any of these subgraphs.

\begin{figure}[htbp]
	\centering
	\includegraphics[width=0.4\textwidth]{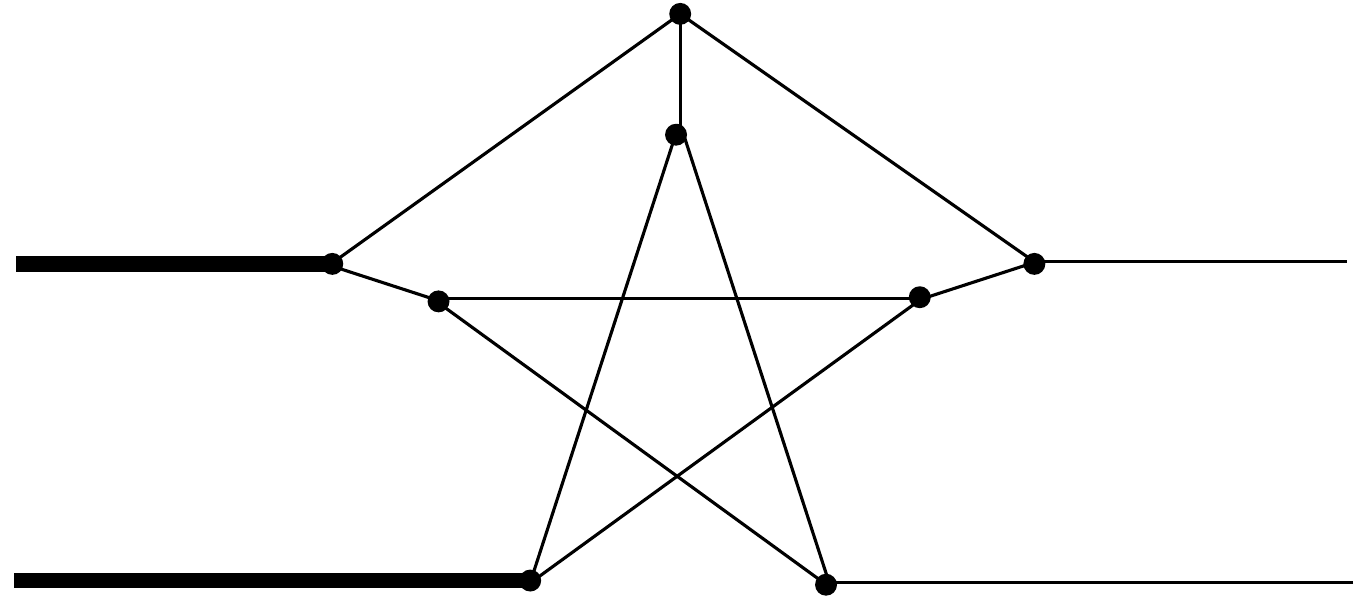}
  \caption{Building block $\mathbf{I}$}
  \label{fig:I}
\end{figure}

\begin{itemize}
\item[1.] Let $\mathbf{I}$ denote the $(2,2)$-pole arising
    from the Petersen graph by removing two adjacent
    vertices and grouping the semiedges formerly incident
    with the same vertex to the same connector; it is shown
    in Figure~\ref{fig:I}. Since every $3$-edge-colouring of
    $\mathbf{I}$ assigns the edges in the same connector
    the same colour, $\mathbf{I}$ is an is an isochromatic
    $4$-pole. It is the only isochromatic $4$-pole on eight
    vertices and at the same time the smallest connected
    isochromatic $4$-pole. In the symbolic representation
    of $\mathbf{I}$ we represent the edges in one of the
    connectors by bold lines, see Figure~\ref{fig:basic}.
    This clearly determines the other connector as well.
    Due to the symmetry of the Petersen graph, the two
    connectors of $\mathbf{I}$ are interchangeable.

    We emphasise that the connectors of $\mathbf{I}$, as
    well as those of the other four building blocks
    $\mathbf{H}_1$, $\mathbf{H}_2$, $\mathbf{T}$, and
    $\mathbf{N}$, are unordered. Different orderings are
    needed for the construction of the members of
    $\mathcal{M}$, a fact which partially explains a
    relatively large size of this set.

\begin{figure}[htbp]
	\centering
	\includegraphics[width=0.75\textwidth]{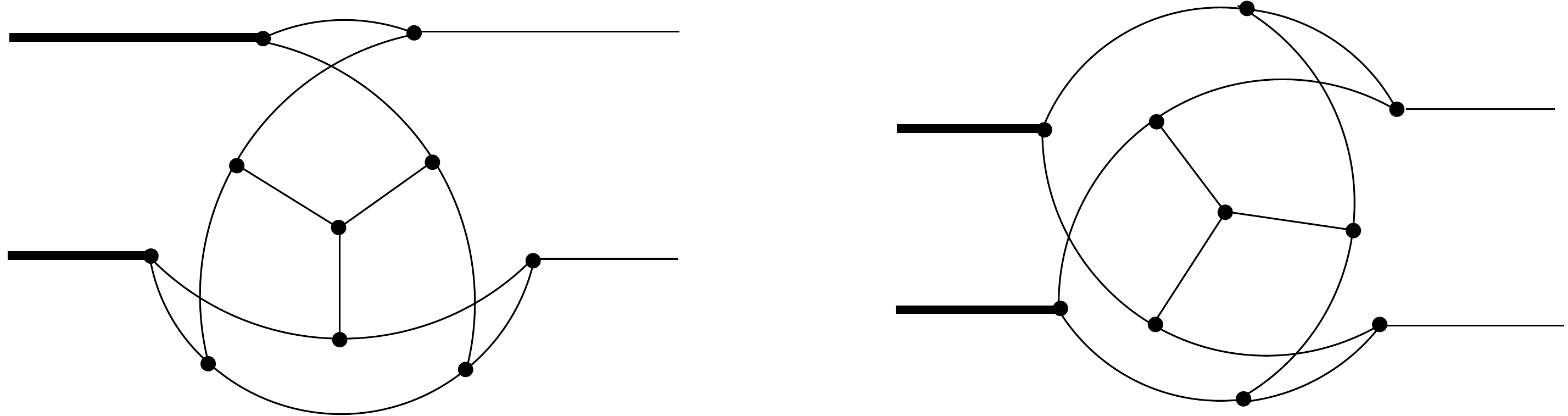}
  \caption{Building blocks $\mathbf{H}_1$ and $\mathbf{H}_2$}
  \label{fig:H}
\end{figure}

\item[2.] Let $\mathbf{H}$ denote a $(2,2)$-pole formed
    from the Petersen graph by severing two independent
    edges and grouping the semiedges arising from the same
    edge to the same connector. Every $3$-edge-colouring of
    $\mathbf{H}$ assigns different colours to the dangling
    edges within the same connector, so $\mathbf{H}$ is a
    heterochromatic $4$-pole. There are two ways how to
    select a pair of independent edges in the Petersen
    graph -- either at distance $1$ or at distance $2$.
    Accordingly, there exist two nonisomorphic
    heterochromatic $4$-poles on ten vertices, denoted by
    $\mathbf{H}_1$ and $\mathbf{H}_2$, respectively.
    Observe that there exists no bridgeless heterochromatic
    $4$-pole with fewer vertices. The two heterochromatic
    $4$-poles on ten vertices are displayed in
    Figure~\ref{fig:H}. In the symbolic representation of
    $\mathbf{H}$ we distinguish the two connectors of
    $\mathbf{H}$ again by using bold lines for one of the
    connectors, see Figure~\ref{fig:basic}. Again, the
    connectors of both $\mathbf{H}_1$ and $\mathbf{H}_2$
    are interchangeable.

\begin{figure}[htbp]
	\centering
	\includegraphics[width=0.4\textwidth]{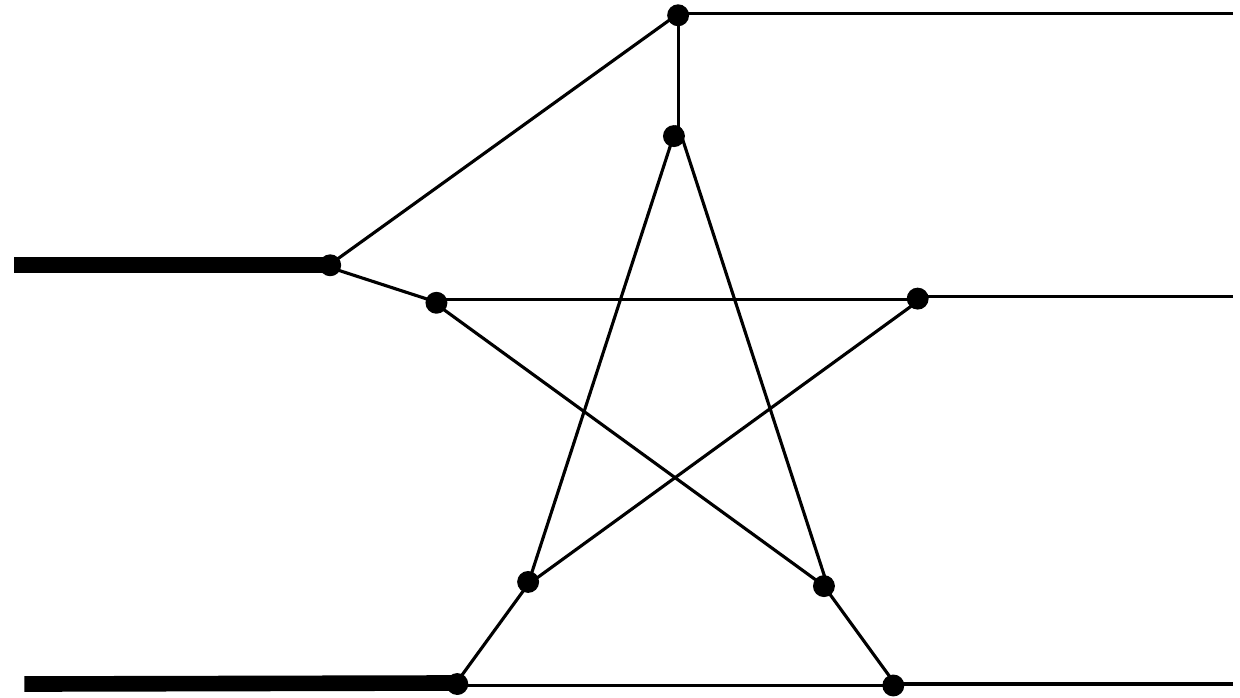}
  \caption{Building block $\mathbf{T}$}
  \label{fig:T}
\end{figure}

\item[3.] Let $\mathbf{T}$ denote the $(2,3)$-pole obtained
    from the Petersen graph by removing an arbitrary vertex
    $v$ and severing an edge $e$ not incident with $v$; the
    semiedges formerly incident with $v$ are put to one
    connector and those arising by severing $e$ are put to
    the other connector. The resulting $(2,3)$-pole is
    shown in Figure~\ref{fig:T}. Given an arbitrary
    $3$-edge-colouring of $\mathbf{T}$, the edges of the
    $2$-connector receive two distinct colours $x$ and $y$,
    while the edges of the $3$-connector receive the
    colours $x+y$, $z$, $z$, where $z$ is any colour from
    $\{1,2,3\}$. In the symbolic representation of
    $\mathbf{T}$ the edges of the $2$-connector are drawn
    bold, see Figure~\ref{fig:basic}.

\begin{figure}[htbp]
	\centering
	\includegraphics[width=0.4\textwidth]{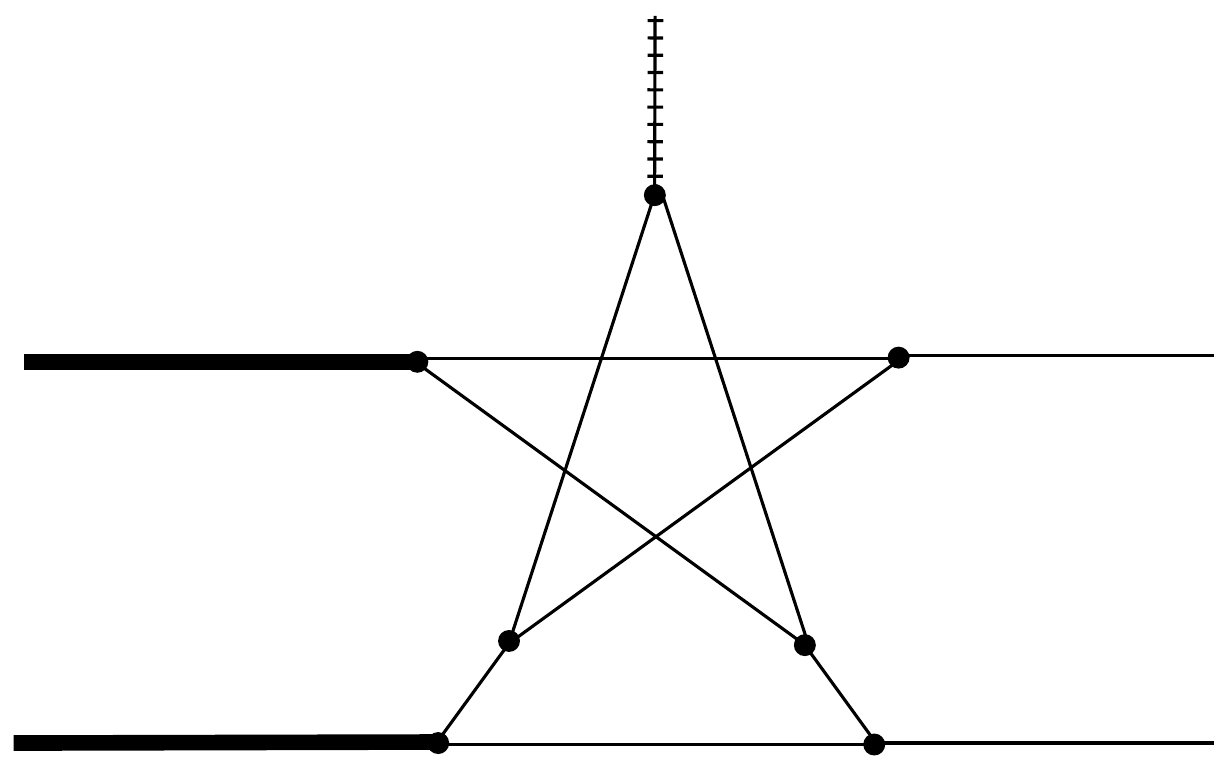}
  \caption{Building block $\mathbf{N}$}
  \label{fig:N}
\end{figure}

\item[4.] Let $\mathbf{N}$ denote the $(2,2,1)$-pole
    arising from the  Petersen graph by removing a path of
    length $2$. The two $2$-connectors consist of the edges
    formerly incident with the same end-vertex of the path,
    the $1$-connector gets the remaining edge. The
    resulting $(2,2,1)$-pole is shown in Figure~\ref{fig:N}.
    The important property of $\mathbf{N}$ consists in the
    fact that every $3$-edge-colouring of $\mathbf{N}$
    assigns the edges of one of the $2$-connectors two
    distinct colours $x$ and $y$ while the edges of the
    other $2$-connector receive the same colour
    $z\in\{1,2,3\}$; the fifth edge of $\mathbf{N}$ is
    coloured $x+y$. In the symbolic representation of
    $\mathbf{N}$ the edges of one $2$-connector are drawn
    bold and the $1$-connector edge is drawn dashed, see
    Figure~\ref{fig:basic}. As with $\mathbf{I}$ and
    $\mathbf{H}$ before, the two $2$-connectors of
    $\mathbf{N}$ are interchangeable.
\end{itemize}

\begin{figure}[htbp]
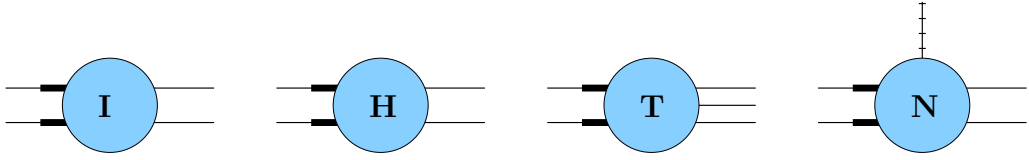
%
  \begin{center}
    \subfigure
    {\scalebox{0.48}{\input{figures/dipI}}}
    \hspace{5mm}
    \subfigure
    {\scalebox{0.48}{\input{figures/dipHi}}}
    \hspace{5mm}
    \subfigure
    {\scalebox{0.48}{\input{figures/dipT}}}
    \hspace{5mm}
    \subfigure
    {\scalebox{0.48}{\input{figures/dipN}}}\\[-10pt]
    \caption{Basic building blocks}%
    \label{fig:basic}%
  \end{center}
\end{figure}

\subsection*{Six classes}
We divide the 31 snarks of $\mathcal{M}$ into six classes
depending on the number of disjoint copies of $\mathbf{I}$,
$\mathbf{H}$, $\mathbf{T}$, and $\mathbf{N}$, and on the number
of additional vertices in the graph. For example, by
$2\mathbf{H}+2\mathbf{I}+\mathbf{N}+1$ we denote the set of all
snarks from $\mathcal{M}$ that consist of two copies of
$\mathbf{H}$, which need not be isomorphic, two copies of
$\mathbf{I}$, one copy of $\mathbf{N}$, and one additional
vertex. We do not distinguish between the two varieties
$\mathbf{H}_1$ and $\mathbf{H}_2$ of $\mathbf{H}$ because both
of them play the same structural role within the snark in
question and their contribution to increasing the oddness is
the same. The six classes of $\mathcal{M}$ are
$$
2\mathbf{H}+3\mathbf{I},\
2\mathbf{H}+2\mathbf{I}+\mathbf{N}+1,\
\mathbf{H}+4\mathbf{I}+2,\
\mathbf{H}+3\mathbf{I}+\mathbf{T}+1,\
5\mathbf{I}+4, \text{ and }\ 4\mathbf{I}+\mathbf{T}+3.
$$

In the rest of this section we describe each of the classes in
detail and prove that every member $G\in\mathcal{M}$ has
$\omega(G)\ge 4$. In our discussion we will be employing
certain standard combinations of the multipoles $\mathbf{I}$,
$\mathbf{H}$, and $\mathbf{T}$ defined by means of junctions.
The order of semiedges in connectors is in all cases
irrelevant. We define the $(2,2)$-poles
$\mathbf{Z}_1=\mathbf{I}\circ\mathbf{H}$ and
$\mathbf{Z}_2=\mathbf{I}\circ\mathbf{H}\circ\mathbf{I}$, the
$(2,3)$-pole $\mathbf{Z}_4=\mathbf{I}\circ\mathbf{T}$, where
the junction involves the $2$-connector of~$\mathbf{T}$, and
the $(2,2,1)$-pole $\mathbf{Z}_3$ defined as follows: in the
$(2,2)$-pole $\mathbf{I}\circ\mathbf{I}$ subdivide one of the
edges between the two copies of $\mathbf{I}$ and subsequently
attach a dangling edge to the new vertex of degree $2$; the
connectors of $\mathbf{Z}_3$ are defined in the obvious way.
The multipoles $\mathbf{Z}_1$, $\mathbf{Z}_2$, $\mathbf{Z}_3$,
and $\mathbf{Z}_4$ are illustrated in
Figure~\ref{fig:dipolesZ}.

\begin{figure}[htbp]
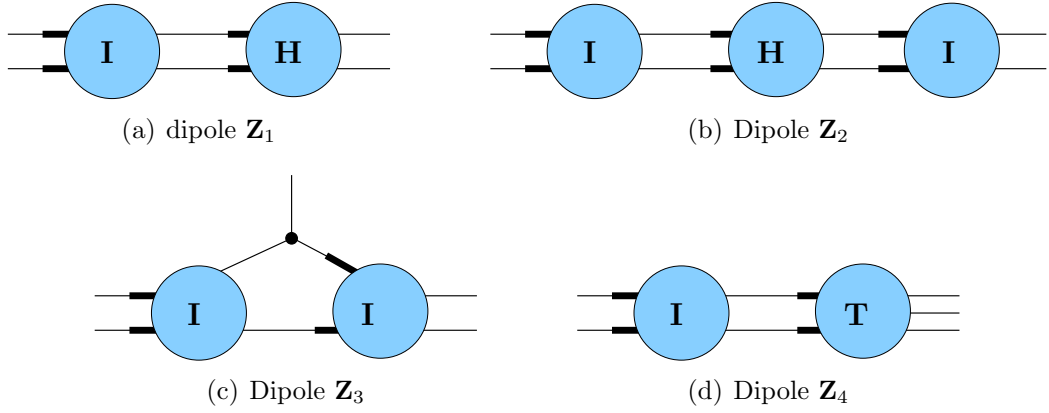
%
  \begin{center}
    \subfigure[dipole $\mathbf{Z}_1$]{\scalebox{0.48}{\input{figures/dipZ1}}}
    \hspace{1cm}
    \subfigure[Dipole $\mathbf{Z}_2$]{\scalebox{0.48}{\input{figures/dipZ2}}}\\
    \subfigure[Dipole $\mathbf{Z}_3$]{\scalebox{0.48}{\input{figures/dipZ3}}}
    \hspace{1cm}
    \subfigure[Dipole $\mathbf{Z}_4$]{\scalebox{0.48}{\input{figures/dipZ4}}}\\[-10pt]
    \caption{Dipoles $\mathbf{Z}_i$}%
    \label{fig:dipolesZ}%
  \end{center}
\end{figure}

\begin{lemma}\label{lemma:Z}
The following statements hold true:
$$\rho(\mathbf{Z}_1)=\rho(\mathbf{Z}_3)=\rho(\mathbf{Z}_4)=1\
\text { and } \ \rho(\mathbf{Z}_2)=2.$$
\end{lemma}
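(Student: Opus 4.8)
The plan is to establish each equality by a two-sided argument: first exhibit an explicit colourable $4$-pole (or $5$-pole) obtained by deleting the stated number of edges, and then use the Parity Lemma together with the isochromatic/heterochromatic characterisations of the building blocks to rule out the possibility of deleting fewer edges.

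First I would handle the lower bounds, which amount to showing that none of $\mathbf{Z}_1,\mathbf{Z}_3,\mathbf{Z}_4$ is colourable and that $\mathbf{Z}_2$ remains uncolourable after deleting any single edge. For $\mathbf{Z}_1=\mathbf{I}\circ\mathbf{H}$: in any $3$-edge-colouring the two semiedges of the free connector of $\mathbf{I}$ receive equal colours (isochromatic), which forces the two semiedges identified with $\mathbf{H}$ along the junction to receive equal colours as well; but on the $\mathbf{H}$ side those two semiedges lie in one connector of $\mathbf{H}$ and hence must receive \emph{distinct} colours (heterochromatic), a contradiction, so $\rho(\mathbf{Z}_1)\ge 1$. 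The same clash of ``iso forces equal'' against ``hetero forces distinct'' across the single internal $\mathbf{I}$--$\mathbf{H}$ junction gives uncolourability of $\mathbf{Z}_4=\mathbf{I}\circ\mathbf{T}$ (using that the $2$-connector of $\mathbf{T}$ always gets two distinct colours) and, for $\mathbf{Z}_3$, I would combine the two $\mathbf{I}$'s: each copy of $\mathbf{I}$ forces its two pairs of semiedges to be monochromatic, and tracing this through the $\mathbf{I}\circ\mathbf{I}$ junction and the subdivided edge bearing the dangling semiedge yields a forbidden colour pattern. Hence $\rho(\mathbf{Z}_1),\rho(\mathbf{Z}_3),\rho(\mathbf{Z}_4)\ge 1$. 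For $\mathbf{Z}_2=\mathbf{I}\circ\mathbf{H}\circ\mathbf{I}$ I would argue $\rho(\mathbf{Z}_2)\ge 2$: deleting a single edge $e$ produces a multipole in which at least one of the two copies of $\mathbf{I}$ is still intact (the Petersen graph minus one edge is still an isochromatic contributor in the relevant sense, since $\mathbf{I}$ on eight vertices minus an edge keeps enough structure), and then the same iso-versus-hetero contradiction across the surviving $\mathbf{I}$--$\mathbf{H}$ junction persists; a short case check on which edge is removed (inside an $\mathbf{I}$, inside $\mathbf{H}$, or on a junction edge) completes this.

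Next I would supply the matching upper bounds by direct construction. For $\mathbf{Z}_1,\mathbf{Z}_4$ and $\mathbf{Z}_3$ I would delete one carefully chosen edge — in $\mathbf{Z}_1$ one of the two junction edges between $\mathbf{I}$ and $\mathbf{H}$, say — and then explicitly $3$-edge-colour what remains, using that $\mathbf{I}$, $\mathbf{H}$, and $\mathbf{T}$ individually admit colourings of several types (indeed the Petersen graph minus a vertex or minus an edge is colourable), so one only needs to pick compatible types on the two sides of the remaining junction edges; the Parity Lemma guarantees that once the interface colours match on the undeleted edges a global colouring exists. Symmetrically, for $\mathbf{Z}_2$ I would delete two edges, one junction edge on each side of the central $\mathbf{H}$ (one $\mathbf{I}$--$\mathbf{H}$ edge on the left and one on the right), decoupling the three blocks enough that each can be coloured independently and the interface constraints on the surviving edges are satisfiable.

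The main obstacle I anticipate is the $\rho(\mathbf{Z}_2)\ge 2$ lower bound, specifically the case analysis for deleting a single edge: one must verify that removing any one edge of $\mathbf{Z}_2$ still leaves at least one copy of $\mathbf{I}$ ``functionally isochromatic'' on the junction semiedges it feeds into, and that the heterochromatic obstruction on the central $\mathbf{H}$ survives the deletion. This is where I would be most careful — possibly invoking the minimality statements from the building-block discussion (that $\mathbf{I}$ is the smallest connected isochromatic $4$-pole, and that there is no bridgeless heterochromatic $4$-pole on fewer than ten vertices) to argue that a single edge deletion cannot simultaneously destroy the isochromatic behaviour of both $\mathbf{I}$'s and the heterochromatic behaviour of $\mathbf{H}$. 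Everything else reduces to finite, checkable colouring computations on the Petersen graph and its small vertex- and edge-deleted subgraphs.
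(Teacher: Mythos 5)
Your plan for $\mathbf{Z}_1$, $\mathbf{Z}_4$, $\mathbf{Z}_3$ and for all the upper bounds matches the paper's proof: the lower bounds come from clashing the isochromatic property of $\mathbf{I}$ against the heterochromatic behaviour of $\mathbf{H}$ (respectively against the forced bichromatic $2$-connector of $\mathbf{T}$, respectively against the two necessarily distinct colours meeting at the subdivision vertex of $\mathbf{Z}_3$), and the upper bounds are explicit colourings, which the paper likewise leaves to the reader.

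The gap is exactly where you predicted it, in $\rho(\mathbf{Z}_2)\ge 2$, and your proposed repair does not work. In the case where the deleted edge (or vertex) lies inside the central copy of $\mathbf{H}$, you need ``the heterochromatic obstruction on the central $\mathbf{H}$ survives the deletion''; but heterochromaticity is a property quantified over \emph{all} colourings of the undamaged $4$-pole and is simply not inherited by $\mathbf{H}$ minus an edge or a vertex, nor do the minimality statements you cite (no bridgeless heterochromatic $4$-pole on fewer than ten vertices, $\mathbf{I}$ the smallest isochromatic one) say anything about the damaged block. What actually closes this case --- and what the paper does --- is the opposite manoeuvre: since both copies of $\mathbf{I}$ are intact, they force \emph{both} connectors of the damaged $\mathbf{H}$ to be monochromatic; joining the two semiedges within each connector then converts the hypothetical colouring into a $3$-edge-colouring of a snark with one vertex removed (the paper uses the cubic graph $K$ obtained by closing up $\mathbf{Z}_2$; one could equally rejoin the two severed edges of $\mathbf{H}$ and land on the Petersen graph), which is impossible because, by the Parity Lemma, no snark has resistance $1$. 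So the missing ingredient is not a robustness property of $\mathbf{H}$ but the fact that a snark stays uncolourable after a single deletion, exploited by re-closing the multipole. Your side remark that a copy of $\mathbf{I}$ minus an edge ``keeps enough structure'' to remain isochromatic is likewise false, but harmless: in that case the other copy of $\mathbf{I}$ together with the intact $\mathbf{H}$ already contains an uncolourable $\mathbf{Z}_1$, which is the argument you (and the paper) in fact use there.
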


\begin{proof}
Observe that $\mathbf{Z}_1$ is uncolourable because it is a
junction of an isochromatic $(2,2)$-pole $\mathbf{I}$ with a
heterochromatic $(2,2)$-pole $\mathbf{H}$. Hence,
$\rho(\mathbf{Z}_1)\ge 1$. Similarly, every colouring of
$\mathbf{T}$ assigns its $2$-connector two distinct colours
while $\mathbf{I}$ is isochromatic, so
$\mathbf{Z}_4=\mathbf{I}\circ\mathbf{T}$ is uncolourable too,
and therefore $\rho(\mathbf{Z}_4)\ge 1$ again. If the
$(2,2,1)$-pole $\mathbf{Z}_3$ was colourable, then every
$3$-edge-colouring of $\mathbf{Z}_3$ would assign the edges
adjacent to the dangling edge of the $1$-connector two distinct
colours. At most one of these colours would match the colour of
the edge connecting the two copies of $\mathbf{I}$ in
$\mathbf{Z}_3$. Therefore, the isochromatic property of at
least one copy of $\mathbf{I}$ in $\mathbf{Z}_3$ would always
be violated. Hence, $\mathbf{Z}_3$ is uncolourable, and
therefore $\rho(\mathbf{Z}_3)\ge 1$.

Finally we prove that $\rho(\mathbf{Z}_2)\ge 2$. Since
$\mathbf{Z}_1$ is contained in $\mathbf{Z}_2$ and
$\rho(\mathbf{Z}_1)\ge 1$, we infer that $\rho(\mathbf{Z}_2)\ge
1$ as well. In particular, the graph $K$ obtained from
$\mathbf{Z}_2$ by identifying the semiedges within each
connector is a snark. To prove that $\rho(\mathbf{Z}_2)\ge 2$
suppose to the contrary that $\rho(\mathbf{Z}_2)=1$, and let
$v$ be a vertex such that $\mathbf{Z}_2-v$ is
$3$-edge-colourable. Clearly, $v$ cannot belong to a copy of
$\mathbf{I}$ for otherwise $\mathbf{Z}_2-v$ would still contain
a copy of $\mathbf{Z}_1$ and therefore would be uncolourable.
Thus $v$ must belong to the copy of $\mathbf{H}$, and hence
every $3$-edge-colouring of $\mathbf{Z}_2-v$ must assign the
same colour to both semiedges in any of the connectors. Now we
can match the semiedges of each connector, thereby obtaining a
$3$-edge-colouring of $K-v$. Since $K$ is a snark, such a
colouring does not exist. This contradiction proves that
$\rho(\mathbf{Z}_2)\ge 2$.

To establish the required equalities for each particular
multipole one has to display the corresponding colourings.
Finding such colourings is straightforward, and therefore is
left to the reader.
\end{proof}

In our analysis of the $31$ snarks we will often need to
distinguish between different copies of the same basic building
block $\mathbf{B}\in\{\mathbf{I}, \mathbf{H}, \mathbf{T},
\mathbf{N}\}$. For this purpose we will be using upper indices,
for example $\mathbf{B}^1$, $\mathbf{B}^2$, etc.

\subsection*{Class 1: \textbf{2H + 3I}\, (7 graphs)} 

\begin{figure}[htbp]%
  \begin{center}
    \subfigure[Class~1a (3 graphs)]{\scalebox{0.4}{\input{figures/sk1a}}}
    \hspace{1cm}
    \subfigure[Class~1b (4 graphs)]{\scalebox{0.4}{\input{figures/sk1b}}}\\[-10pt]
    \caption{Class 1}%
    \label{fig:class1}%
  \end{center}
\end{figure}

This class splits into two subclasses, Class~1a and Class~1b,
depending on how the building blocks are connected between each
other. Both subclasses are illustrated in
Figure~\ref{fig:class1}. Each graph $G$ from Class~1 contains a
copy of $\mathbf{Z}_1$ and a copy of $\mathbf{Z}_2$. Since the
copies of  $\mathbf{Z}_1$ and $\mathbf{Z}_2$ are disjoint,
Lemma~\ref{lemma:Z} implies that $\rho(G)\ge 3$ and hence
$\omega(G)\ge 4$.

Class~1a consists of graphs 15, 17, and 18, while Class~1b
contains graphs 1, 4, 21, and 24.

\subsection*{Class 2: \textbf{2H + 2I + N + 1}\, (4 graphs)}
The structure of graphs from Class~2 is illustrated in
Figure~\ref{fig:class2}. It can be seen that every graph $G$ from
Class~2 contains two disjoint copies of $\mathbf{Z}_1$,
therefore $\rho(G)\ge 2$. Note each of the  induced subgraphs
$[\mathbf{H}^1\cup\mathbf{N}\cup\mathbf{H}^2]$,
$[\mathbf{H}^1\cup\mathbf{I}^1]$,
$[\mathbf{H}^2\cup\mathbf{I}^2]$, and
$[\mathbf{I}^1\cup\mathbf{I}^2\cup \{v\}]$ is uncolourable, the
last of them being isomorphic to $\mathbf{Z}_3$. Thus if there
exist two vertices $u$ and $w$ in $G$ such that $G-\{u,w\}$ is
colourable, then either one of them lies in $\mathbf{H}^1$ and
the other lies in $\mathbf{I}^{2}$, or one of them lies in
$\mathbf{H}^2$ and the other lies in $\mathbf{I}^{1}$. In all
other cases one of the mentioned uncolourable subgraphs will
remain intact. Without loss of generality we may assume that
$u$ belongs to $\mathbf{H}^1$ and $w$ belongs $\mathbf{I}^2$.
Since $\mathbf{I}^1$ is isochromatic, the edges between
$\mathbf{I}^1$ and $\mathbf{H}^1$ have the same colour. Because
$\mathbf{H}^2$ is heterochromatic, the colours the edges
between $\mathbf{H}^2$ and $\mathbf{N}$ are different. The
inverting property of $\mathbf{N}$ ensures that the edges
between $\mathbf{N}$ and $\mathbf{H}^1$ also have the same
colour. If we now identify the dangling edges within each
connector of $\mathbf{H}^1$, we obtain a colouring of the
Petersen graph with one vertex removed. This contradiction
proves that that $\rho(G)>2$, which means that $\omega(G)\ge
4$, as required.

Class~2 contains graphs 10, 11, 19, and 27.

\begin{figure}[htbp]
  \centerline{
   \scalebox{0.4}{
     \input{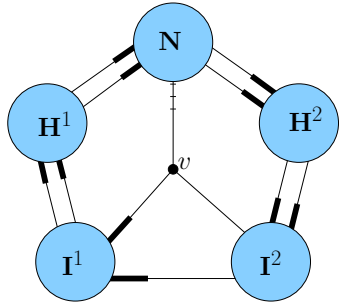}
   }
  }
   \caption{Class 2 (4 graphs)}
   \label{fig:class2}
\end{figure}

\subsection*{Class 3: \textbf{H + 4I + 2}\, (6 graphs) } 

\begin{figure}[htbp]%
  \begin{center}
    \subfigure[Class 3a (2 graphs)]{\scalebox{0.37}{\input{figures/sk3ane}}}
    \hspace{1mm}
    \subfigure[Class 3b (2 graphs)]{\scalebox{0.37}{\input{figures/sk3bne}}}
    \\
    \subfigure[Class 3c (2 graphs)]{\scalebox{0.265}{\input{figures/sk3cne}}}
    \caption{Class 3}%
    \label{fig:class3}%
  \end{center}
\end{figure}

This class splits into three subclasses, 3a, 3b, and 3c. Their
structure is represented in Figure~\ref{fig:class3}. Every graph
$G$ from Class~3a and Class~3b contains a copy of
$\mathbf{Z}_2$ and a copy $\mathbf{Z}_3$, which are disjoint,
therefore $\rho(G)\ge 3$ and $\omega(G)\ge 4$. Class~3c is
somewhat different and requires a separate argument. Consider
an arbitrary graph $G$ graph from  Class~3c. Since $G$ contains
a copy of $\mathbf{Z}_1$, we have $\rho(G)\ge 2$. Note that
each of the subgraphs $[\mathbf{H}\cup\mathbf{I}^1]$,
$[\mathbf{I}^2\cup\mathbf{I}^3\cup\{x\}]$, and
$[\mathbf{I}^3\cup\mathbf{I}^4\cup\{y\}]$ has resistance 1.
Thus if there exist two vertices $u$ and $w$ in $G$ such that
$G-\{u,w\}$ is $3$-edge-colourable, then one of them, say $u$,
lies in $[\mathbf{H}\cup\mathbf{I}^1]$ and the other belongs to
$\mathbf{I}^3$. Since $\mathbf{I}^2$ and $\mathbf{I}^4$ remain
intact in $G-\{u,w\}$, the isochromatic property of
$\mathbf{I}$ implies that the edge $e$ connecting
$\mathbf{I}^2$ to $\mathbf{H}$ has the same colour as the edge
$f$ connecting $\mathbf{I}^4$ to $\mathbf{H}$. Since
$\mathbf{H}$ is heterochromatic, we conclude that $u$ lies in
$\mathbf{H}$. As a consequence, $\mathbf{I}^1$ remains intact
in $G-\{u,w\}$ too, which means that the two edges $e'$ and
$f'$ joining $\mathbf{H}$ to $\mathbf{I}^1$ are equally
coloured as well. But then the induced colouring of
$\mathbf{H}$ yields a $3$-edge-colouring of the Petersen graph
with a single vertex removed. This contradiction proves that
$\rho(G)>2$ and $\omega(G)\ge 4$.

Class~3a contains graphs 5 and 6, Class~3b contains graphs 7
and 23, and Class~3c contains graphs 9 and 26.

\subsection*{Class 4: \textbf{H + 3I + T + 1}\, (10 graphs) }
\begin{figure}[htbp]%
  \begin{center}
    \subfigure[Class 4a (4 graphs)]{\scalebox{0.4}{\input{figures/sk4ane}}}
    \hspace{1cm}
    \subfigure[Class 4b (6 graphs)]{\scalebox{0.4}{\input{figures/sk4bne}}}\\[-10pt]
    \caption{Class 4}%
    \label{fig:class4}%
  \end{center}
\end{figure}
Class~4 has two subclasses 4a and 4b, both shown in
Figure~\ref{fig:class4}. Every graph $G$ from Class~4 contains
disjoint copies of $\mathbf{Z}_2$ and $\mathbf{Z}_4$, so
$\rho(G)\ge 3$ and $\omega\ge 4$.

Class~4a consists of graphs 2, 3, 13, and 14, and Class~4b
contains graphs 8, 12, 16, 20, 22, and 25.

\begin{figure}[htbp]%
  \begin{center}
    \subfigure[Class 5a (1 graph)]{\scalebox{0.4}{\input{figures/sk5ab}}}
    \hspace{1cm}
    \subfigure[Class 5b
    (1 graph)]{\scalebox{0.4}{\input{figures/sk5bb}}}\\[-10pt]
    \caption{Class 5}%
    \label{fig:class5}%
  \end{center}
\end{figure}

\subsection*{Class 5: \textbf{5I + 4}\, (2 graphs)}
Class~5 has two subclasses Class~5a and Class~5b; they are
represented in Figure~\ref{fig:class5}. Observe that every member
of Class~5 contains a $5$-pole isomorphic to $\mathbf{Z}_3$,
namely $[\mathbf{I}^1\cup\mathbf{I}^2\cup\{v_1\}]$, whose
resistance is~1 by Lemma~\ref{lemma:Z}. Hence, every member of
Class~5 is indeed a snark.

Now, let us consider a graph $G$ from Class~5a. To prove that
$\omega(G)\ge 4$ suppose the contrary. Then
$\omega(G)=\rho(G)=2$, which means that there exist vertices
$u$ and $w$ in $G$ such that $G-\{u,w\}$ is colourable. Since
each of the $5$-poles $[\mathbf{I}^k\cup\mathbf{I}^{k+1}\cup
\{v_k\}]$, with $k\in\{1,2,3,4\}$, is isomorphic to
$\mathbf{Z}_3$, and $\rho(\mathbf{Z}_3)=1$, none of them
survives in $G-\{u,w\}$. It follows that one of $u$ and $w$
lies in $\mathbf{I}^2$ and the other one lies in
$\mathbf{I}^4$. Let $\varphi$ be a $3$-edge-colouring of
$G-\{u,w\}$. Without loss of generality we may assume that
$\varphi(f_1)=1$, $\varphi(g_1)=2$ and $\varphi(h_1)=3$. Since
$\mathbf{I}^1$ remains intact in $G-\{u,w\}$, we have
$\varphi(e_1)=1$. Similarly, since $\mathbf{I}^5$ is intact, we
have $\varphi(h_2)=\varphi(h_1)=3$. Consider the colours of
$e_2$ and $f_2$. Since $\mathbf{I}^3$ remains intact
$G-\{u,w\}$, we deduce that $\varphi(e_2)=\varphi(g_2)$ and
hence $\varphi(e_2)\neq\varphi(f_2)$. Furthermore,
$\{1,2\}=\{\varphi(e_1),\varphi(g_1)\}\ne\{\varphi(e_2),\varphi(f_2)\}$
because otherwise the induced colouring of $\mathbf{I}^2$ would
give rise to a $3$-edge-colouring of the Petersen graph with
one vertex removed, which is impossible. Therefore
$\{\varphi(e_2),\varphi(f_2)\}=\{1,3\}$ or $\{2,3\}$, but since
$\varphi(f_2)\ne\varphi(h_2)=3$ we conclude that
$\varphi(e_2)=3$. However, the isochromatic property of
$\mathbf{I}^3$ forces $\varphi(g_2)=\varphi(e_2)=3$, which is
in conflict with the value $\varphi(h_2)=3$ because $g_2$ and
$h_2$ are adjacent. Therefore $\rho(G)\ge 3$ and $\omega(G)\ge
4$.

Next, let us consider a graph $G$ from Class~5b. We wish to
prove that $\omega(G)\ge 4$. Suppose the contrary. Then
$\rho(G)=2$, which again means that there exist vertices $u$
and $w$ in $G$ such that $G-\{u,w\}$ is $3$-edge-colourable. In
this case it is easy to see that one of the vertices belongs
$\mathbf{I}^2$ and the other one belongs to
$\mathbf{I}^4\cup\mathbf{I}^5\cup\{v_4\}$. Suppose that the
vertex $w$ lies in $\mathbf{I}^5\cup \{v_4\}$ so that
$\mathbf{I}^4$ remains intact. Take an arbitrary
$3$-edge-colouring $\varphi$ of $G-\{u,w\}$, and let
$\varphi(f_3)=x$. Since $\mathbf{I}^1$ and $\mathbf{I}^3$ are
also left intact in $G-\{u,w\}$, we deduce that
$\varphi(e_3)=x$, $\varphi(g_2)=x$, and finally
$\varphi(g_1)=x$. However, the edges $g_1$ and $e_3$ are
adjacent, so this is impossible. Therefore $w\in\mathbf{I}^4$,
which means that $\mathbf{I}^5$ is left intact. In this
situation we can derive a contradiction in a similar way as for
Class~5a. This proves that $\omega(G)\ge 4$ for every graph
from Class~5b.

Both Class~5a and Class~5b consist of a single graph each --
graphs 28 and 30, respectively.

\begin{figure}[htbp]
  \centerline{
   \scalebox{0.4}{
     \input{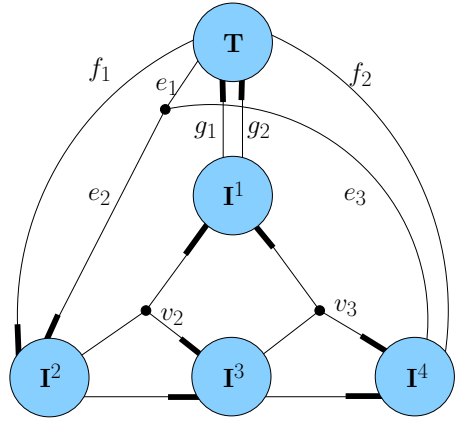}
   }
  }
   \caption{Class 6 (2 graphs)}
   \label{fig:class6}
\end{figure}

\subsection*{Class 6: \textbf{4I + T + 3}\, (2 graphs) }
The structure of Class~6 is displayed in Figure~\ref{fig:class6}.
Let us first observe that every member $G$ of Class~6 is
uncolourable because it contains the subgraph
$[\mathbf{I}^1\cup\mathbf{T}]$ isomorphic to $\mathbf{Z}_4$,
whose resistance equals 1 by Lemma~\ref{lemma:Z}. It follows
that $\rho(G)\ge 2$. To prove that $\omega(G)\ge 4$ suppose to
the contrary that $\omega(G)=2=\rho(G)$. Since each of the
subgraphs $[\mathbf{I}^k\cup\mathbf{I}^{k+1}\cup \{v_k\}]$,
with $k\in\{2,3\}$, is also uncolourable, there exist vertices
$u\in\mathbf{I}^3$ and $w\in [\mathbf{I}^1\cup\mathbf{T}]$ such
that $G-\{u,w\}$ is $3$-edge-colourable. Let $\varphi$ be one
such colouring. Without loss of generality we may assume that
$\varphi(e_1)=1$, $\varphi(e_2)=2$, and $\varphi(e_3)=3$. Since
$\mathbf{I}^2$ and $\mathbf{I}^4$ are left intact, we have
$\varphi(f_1)=2$ and $\varphi(f_2)=3$. Put together, the edges
of the $3$-connector of $\mathbf{T}$ receive three distinct
colours from the colouring $\varphi$. As previously mentioned,
every $3$-edge-colouring of $\mathbf{T}$ forces a repeated
colour in the $3$-connector. Therefore $w$ must belong to
$\mathbf{T}$. It follows that $\mathbf{I}^1$ remains intact and
therefore $\varphi(g_1)=\varphi(g_2)$. But then the induced
colouring of $\mathbf{T}$ yields a $3$-edge-colouring of the
Petersen graph with a single vertex removed. This contradiction
proves that $\omega(G)\ge 4$.

Class~6 contains two nonisomorphic graphs -- 29 and 31.

\subsection*{Properties of graphs in $\mathcal{M}$}

We have determined the values of several invariants for the 31
snarks in $\mathcal{M}$. In most cases the computations were
performed by a computer. The evaluated invariants can be
divided into two groups. The first group is constituted by
general invariants: namely, the order of the automorphism
group, \textit{genus} (minimum genus of an orientable surface
upon which a given graph can be drawn without intersections), diameter, radius, and \textit{circumference} (the
maximal circuit length in a graph). The values of these
invariants for individual members of $\mathcal{M}$ are
summarised in Table~\ref{table:31_snarks-general invariants}.
In particular, all members of $\mathcal{M}$ have circumference
$41$, that is, $n-3$ where $n$ is the number of vertices. The
values of the remaining invariants vary over the set
$\mathcal{M}$. It is quite remarkable that the automorphism
group of every graph in $\mathcal{M}$ is a $2$-group (or is
trivial).


	\begin{table}[htb!]
	\small
		\centering 
		 \renewcommand{\arraystretch}{1.1} 
		\begin{tabular}{crrrrrr} 
			\toprule 
Class & Graph number & $|\Aut(G)|$  & Genus & Diameter & Radius &
Circumference\\
			\midrule 
1a & 15 & 4 & 4 & 8 & 6 & 41 \\
1a & 17 & 64 & 5 & 8 & 7 & 41\\
1a & 18 & 8 & 5 & 8 & 7 & 41\\
\hline
1b & 1 & 16 & 5 & 8 & 6 & 41\\
1b & 4 & 1 & 4 & 8 & 6 & 41\\
1b & 21 & 4 & 5 & 8 & 6 & 41\\
1b & 24 & 4 & 5 & 8 & 6 & 41\\
\hline
2 & 10 & 2 & 4 & 8 & 5 & 41\\
2 & 11 & 2 & 4 & 8 & 5 & 41\\
2 & 19 & 16 & 5 & 8 & 5 & 41\\
2 & 27 & 2 & 5 & 8 & 5 & 41\\
\hline
3a & 5 & 4 & 5 & 8 & 6 & 41\\
3a & 6 & 1 & 4 & 8 & 6 & 41\\
3b & 7 & 2 & 5 & 8 & 6 & 41\\
3b & 23 & 8 & 5 & 8 & 6 & 41\\
\hline
3c & 9 & 4 & 4 & 8 & 6 & 41\\
3c & 26 & 1 & 4 & 8 & 6 & 41\\
\hline
4a & 2 & 8 & 5 & 8 & 7 & 41\\
4a & 3 & 4 & 5 & 8 & 7 & 41\\
4a & 13 & 2 & 4 & 8 & 6 & 41\\
4a & 14 & 1 & 4 & 8 & 6 & 41\\
\hline
4b & 8 & 1 & 4 & 8 & 6 & 41\\
4b & 12 & 1 & 4 & 8 & 6 & 41\\
4b & 16 & 4 & 5 & 8 & 6 & 41\\
4b & 20 & 1 & 5 & 8 & 6 & 41\\
4b & 22 & 4 & 5 & 8 & 6 & 41\\
4b & 25 & 4 & 5 & 8 & 6 & 41\\
\hline
5a & 28 & 2 & 4 & 8 & 6 & 41\\
\hline
5b & 30 & 2 & 5 & 7 & 6 & 41\\
\hline
6 & 29 & 2 & 5 & 7 & 6 & 41\\
6 & 31 & 1 & 4 & 8 & 6 & 41\\
			\bottomrule 
		\end{tabular}
		\caption{Basic invariant values for the 31 snarks of
$\mathcal{M}$.
}
        \label{table:31_snarks-general invariants}
	\end{table}	


The second group of invariants comprises those which are
of particular interest for snarks: perfect matching index
$\pi$, resistance $\rho$, weak oddness $\omega'$, and two
invariants introduced in~\cite{LiSt, Steffen:pmcovers,
Steffen:5-flows} and denoted by $\mu_3$ and~$\gamma_2$; see
also \cite{FMS-survey} for a recent survey. The
\textit{perfect matching index} of a bridgeless cubic graph
$G$, denoted by $\pi(G)$ (also known as \textit{excessive
index} and denoted by $\chi'_e$), is the smallest number of
perfect matchings that cover all the edges of $G$~\cite{BoCa,
FouqVa}. The \textit{weak oddness} of a cubic graph $G$,
denoted by $\omega'(G)$, is the smallest number of components
of odd order in an even factor of $G$; by an \textit{even
factor} we mean a spanning subgraph with all degrees even.
Given a bridgeless cubic graph $G$, we define $\gamma_2(G)$ to
be the smallest number of common edges that two perfect
matchings of $G$ can have, and let $\mu_3(G)$ be the smallest
number of edges of $G$ that are left uncovered by the union of
any three perfect matchings of $G$.

The perfect matching index $\pi(G)$ of $G$ is bounded below by
$3$ and equals $3$ if and only if $G$ is $3$-edge-colourable.
It is believed, by a conjecture of Berge (see~\cite{S}), that
$\pi(G)\le 5$ for every bridgeless cubic graph $G$. This
conjecture, if true, thus divides all snarks into two
subclasses, those with perfect matching index equal to $4$, and
those with perfect matching index $5$. We have determined that
$\pi(G)=4$ for every $G\in\mathcal{M}$.

The remaining invariants, along with oddness, can be regarded
as measures of uncolourability as they take value $0$ on
$3$-edge-colourable graphs, and positive values otherwise.
Their comparison with oddness and resistance may therefore be
very instructive.

First of all, using a computer we have determined that
$\rho(G)=3$ for all snarks $G\in\mathcal{M}$. As regards weak
oddness, it is easy to see that $\omega'(G)$ is an even integer
such that $\rho(G)\le\omega'(G)\le\omega(G)$.
Furthermore, if $\omega(G)=4$, then necessarily $\omega'(G)=4$
as well, because otherwise $\omega'(G)=2$ would immediately
yield $\rho(G)=2$ whence $\omega(G)=2$. In particular, for all
snarks $G\in\mathcal{M}$ we have $\omega'(G)=\omega(G)=4$.
It is known, however, that in general both the difference
$\omega(G)-\omega'(G)$ and $\omega'(G)-\rho(G)$ can be
arbitrarily large (see~\cite{Al, LM}).

We finish this section by discussing the previously mentioned
invariants $\gamma_2$ and $\mu_3$. In
\cite[Proposition~2.1]{Steffen:5-flows} Steffen proved  that
$\omega(G)\le 2\gamma_2(G)$ for every bridgeless cubic
graph~$G$. Using a computer we have determined that
$\gamma_2(G)=2$ for every $G\in\mathcal{M}$, which shows that
every snark from $\mathcal{M}$ fulfils the upper bound on
$\omega$ set by $\gamma_2$ with equality. Similarly, in
\cite[Corollary~2.4]{LiSt} Jin and Steffen proved that
$\omega(G)\le 2\mu_3(G)/3$ for every bridgeless cubic graph.
Using a computer we have determined that all snarks
$G\in\mathcal{M}$ have $\mu_3(G)=6$, which means that they
again reach the upper bound on $\omega$ in terms of $\mu_3$
with equality. Snarks with the latter property have a very
special structure of sets of edges left uncovered by three
perfect matchings and therefore deserve special attention.

The values of perfect matching index, resistance, weak oddness,
$\gamma_2$, and $\mu_3$ for the snarks of $\mathcal{M}$ are
summarised in Table~\ref{table:31_snarks-special invariants}.

\begin{table}[htb!]
		\centering 
		 \renewcommand{\arraystretch}{1.1} 
		\begin{tabular}{crrrr} 
			\toprule 
$\pi$ & $\rho$   & $\omega'$ & $\gamma_2$ & $\mu_3$ \\
			\midrule 
4 & 3 & 4 & 2 & 6 \\
			\bottomrule 
		\end{tabular}
	\caption{Values of perfect matching index and
unclourability measures for the snarks of
$\mathcal{M}$}\label{table:31_snarks-special invariants}
	\end{table}	

\subsection*{Infinite families}
Each of the six classes described above gives rise to an
infinite family of snarks with oddness at least $4$ and cyclic
connectivity $4$. It is sufficient to replace the basic
building blocks $\mathbf{I}$, $\mathbf{H}$, $\mathbf{T}$, and
$\mathbf{N}$ obtained from the Petersen graph with similar
structures created from any cyclically $4$-edge-connected
snark. With a little additional care one can construct infinite
families of snarks with increasing oddness.

\section{Completeness of $\mathcal{M}$}\label{sec:complete}

In this section we prove that the set $\mathcal{M}$,
constructed and analysed in Section~\ref{sec:31}, is the
complete set of pairwise nonisomorphic snarks with cyclic
connectivity $4$, oddness at least $4$, and minimum order. Our
point of departure is the following theorem proved in~\cite{GMS1}.

\begin{theorem}\label{thm:44}
The smallest number of vertices of a snark with cyclic
connectivity $4$ and oddness at least $4$ is $44$. The girth of
each such snark is at least $5$.
\end{theorem}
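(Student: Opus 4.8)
The plan is to argue by contradiction: let $G$ be a snark with $\zeta(G)=4$ and $\omega(G)\ge 4$ of smallest possible order, and deduce $|G|\ge 44$; together with the $44$-vertex snarks of $\mathcal{M}$ described in Section~\ref{sec:31} this gives equality, and the reductions below also yield girth at least $5$ for every such snark of minimum order. The first move is to trade oddness for resistance: since $\rho(G)\le\omega(G)$, since $\rho(G)$ is never equal to $1$ by the Parity Lemma, since $\rho(G)=2$ holds exactly when $\omega(G)=2$, and since $\omega(G)$ is always even, the condition $\omega(G)\ge 4$ is \emph{equivalent} to $\rho(G)\ge 3$. I will work with $\rho(G)$ — equivalently, the least number of edges whose deletion makes $G$ colourable — because edge deletions interact cleanly with edge-cuts and with junctions of multipoles.

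The second step removes small girth. Girth $3$ is impossible: the three edges leaving a triangle form a $3$-edge-cut, which cannot be cycle-separating because $\zeta(G)=4$, so the non-triangle side is acyclic, hence a single vertex by Lemma~\ref{lemma:acyclic}, forcing $G=K_4$, a colourable graph. For girth $4$, choose a chordless $4$-circuit $C$; the four edges leaving $C$ form a cycle-separating $4$-edge-cut, and the $4$-pole obtained from $C$ by attaching a dangling edge at each vertex realises colourings of only three of the four Parity-Lemma types — the type in which the two diagonal pairs of semiedges are monochromatic is not attainable. The opposite side of the cut must therefore be an \emph{uncolourable} $4$-pole, and one may contract $C$ — deleting it and joining the two adjacent pairs of cut-edges by a pair of independent edges — to obtain a strictly smaller snark that still has cyclic connectivity $4$ and resistance at least $3$, contradicting the minimality of $G$. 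The one delicate point is verifying that cyclic $4$-edge-connectivity survives the contraction, which is arranged by choosing the circuit (equivalently, the cut) well. Consequently $G$ has girth at least $5$, and — since the reduction applies whenever the girth is at most $4$ — so does every snark with cyclic connectivity $4$ and oddness $\ge 4$ of minimum order.

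The core of the proof is the analysis of a cycle-separating $4$-edge-cut $R$, which exists because $\zeta(G)=4$. Writing $G=M_1*M_2$ for the complete junction of the two $4$-poles determined by $R$, I record, for each $M_i$, its set of colouring types among $1111$, $1122$, $1212$, $1221$ — the only possibilities by the Parity Lemma. Uncolourability of $G$ forces these two type-sets to be disjoint, and since every colourable $4$-pole realises at least two of the four types, classifying the colour-open $4$-poles of small order returns precisely the building blocks $\mathbf{I}$, $\mathbf{H}_1$, $\mathbf{H}_2$ of Section~\ref{sec:31}, together with a short list of further small configurations. The main quantitative ingredient I would then establish is a superadditivity principle for resistance across a $4$-join: the number of edges one has to delete from $G$ to make it colourable is at least the sum of the separate ``colouring defects'' of $M_1$ and $M_2$ relative to each other's type-repertoires. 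Combined with $\rho(G)\ge 3$, this forces a nontrivial defect on \emph{both} sides of $R$; choosing $R$ so that neither side can be reduced further, and closing each side into a cubic graph by its appropriate minimal gadget (two adjacent vertices, or two independent edges, according to its type-set), one concludes that $G$ is a $4$-join in the sense of Section~\ref{sec:31} of two cyclically $4$-edge-connected snarks of order at most $36$.

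At this point the problem is finite: it suffices to form, in all possible ways and for every identification of the semiedges, the $4$-join of each pair of cyclically $4$-edge-connected snarks of order at most $36$ (the set $\mathcal{S}_{36}$), and to test the oddness of the results. This is exactly the computation carried out in Section~\ref{sec:complete}; it yields no graph of order below $44$ with oddness $4$ — the desired lower bound — and on $44$ vertices it yields precisely the $31$ snarks of $\mathcal{M}$, all of girth $5$. I expect the main obstacle to be twofold, which is why the argument is half theoretical and half computational. On the theoretical side one must carry out the colouring and cyclic-connectivity bookkeeping carefully enough to guarantee that the two pieces genuinely are cyclically $4$-edge-connected snarks of order at most $36$ — this is where choosing the cut well and controlling resistance under junctions is most delicate. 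On the computational side one must make the search over all pairs from $\mathcal{S}_{36}$, ranging over all semiedge identifications, actually feasible; the more than $10^{13}$ candidate graphs and the many CPU-years mentioned in the introduction are the price paid for that.
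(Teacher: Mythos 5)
First, a point of orientation: this paper does not prove Theorem~\ref{thm:44} at all. It is imported from \cite{GMS1} and stated as the ``point of departure'' for Section~\ref{sec:complete}; all the present paper adds is the stronger structural statement, Theorem~\ref{thm:4decomp-detailed} (also quoted from \cite{GMS1}), and the completeness result Theorem~\ref{thm:main}. So your proposal must be judged as a reconstruction of the argument of \cite{GMS1}, using what this paper reveals about it. At that level your architecture is the right one --- pass from oddness to resistance (the equivalence $\omega\ge 4\Leftrightarrow\rho\ge 3$ is correct), decompose along a cycle-separating $4$-edge-cut, show both sides are defective, close each side up to a cyclically $4$-edge-connected snark in $\mathcal{S}_{36}$, and finish by exhaustive computation. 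But the central step is missing. Your ``superadditivity principle for resistance across a $4$-join'' is not a lemma you can simply invoke: resistance adds across \emph{disjoint uncolourable} subgraphs (that is how Section~\ref{sec:31} gets $\rho\ge 3$ for members of $\mathcal{M}$), but across a single $4$-cut one side is typically colourable, its ``defect'' in the resistance sense is zero, and no quantitative inequality of the kind you describe is available. What the real proof needs, and what your sketch does not deliver, is the trichotomy of Theorem~\ref{thm:4decomp-detailed}: one side uncolourable, the other uncolourable, heterochromatic, or isochromatic, \emph{and} --- crucially --- each side completable to a cyclically $4$-edge-connected snark by adding two vertices or two isolated edges. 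That completion claim is the genuinely hard part (the entire proof of Theorem~\ref{thm:main}, Claims 1--4 and the analysis of crossing balanced $3$-cuts, exists precisely because it can fail naively), and it is the only step that converts ``both sides are defective'' into ``both sides live in $\mathcal{S}_{36}$'', which is what bounds the order and makes the problem finite.

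Two further gaps. The girth-$4$ reduction is asserted rather than proved at exactly the delicate points: you must show that contracting the $4$-circuit preserves resistance $\ge 3$ (deleting one of the two new edges has no obvious preimage among deletions in $G$), and that the resulting graph again has cyclic connectivity \emph{exactly} $4$ --- if it drops the reduced graph is outside the class, and if it rises to $5$ the minimality hypothesis gives no contradiction, since the theorem deliberately leaves the cyclically $5$-edge-connected case open. Finally, the computation you appeal to is not the one in Section~\ref{sec:complete}: the program there is run only on pairs from $\mathcal{S}_{36}$ whose $4$-join has $44$ vertices, so it establishes nothing about orders below $44$; the sub-$44$ search that actually yields the lower bound is part of \cite{GMS1}, not of this paper.
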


This result is a consequence of the following stronger and more
detailed result from~\cite{GMS1} which will be needed for the
proof of the main result of this paper.

\begin{theorem} \label{thm:4decomp-detailed}
Let $G$ be a snark with oddness at least $4$, cyclic
connectivity $4$, and minimum number of vertices. Let $S$ be a
cycle-separating $4$-edge-cut in $G$ whose removal leaves
components $G_1$ and $G_2$. Then, up to permutation of the
index set $\{1,2\}$, exactly one of the following occurs.
\begin{itemize}
\item[{\rm (i)}] Both $G_1$ and $G_2$ are uncolourable, in
    which case each of them can be extended to a cyclically
    $4$-edge-connected snark by adding two vertices.
\item[{\rm (ii)}]  $G_1$ is uncolourable and $G_2$ is
    heterochromatic, in which case $G_1$ can be extended to
    a cyclically $4$-edge-connected snark by adding two
    vertices, and $G_2$ can be extended to a cyclically
    $4$-edge-connected snark by adding two isolated edges.
\item[{\rm (iii)}]$G_1$ is uncolourable and $G_2$ is
    isochromatic, in which case $G_1$ can be extended to a
    cyclically $4$-edge-connected snark by adding two
    vertices, and $G_2$ can be extended to a cyclically
    $4$-edge-connected snark by adding two vertices, except
    possibly $\zeta(G_2)=2$. In the latter case, $G_2$ is a
    partial junction of two colour-open $4$-poles, which
    may be isochromatic or heterochromatic in any
    combination.
\end{itemize}
\end{theorem}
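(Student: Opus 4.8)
The plan is to establish the trichotomy by first pinning down the colourability types of the two sides of a cycle-separating $4$-edge-cut $S$ in a minimum snark $G$ of oddness $\ge 4$, and then to control the cyclic connectivity of the resulting $4$-poles by a minimality argument. Write $S=\delta_G(G_1)=\delta_G(G_2)$, and let $M_i$ be the $4$-pole obtained from $G_i$ by retaining the four dangling edges coming from $S$; each $M_i$ is a colour-open $4$-pole or uncolourable, since by the Parity Lemma every $4$-pole colouring has type $1111$, $1122$, $1212$, or $1221$, and a colour-open $4$-pole is precisely one realising only two of these. The first step is to show that both $M_1$ and $M_2$ cannot be colourable. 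Indeed, if both were colourable, one would build a $2$-factor of $G$ with at most two odd circuits by choosing compatible colourings on the two sides (the standard argument that a $4$-join of two colourable $4$-poles whose colouring-types overlap is colourable, and even when they do not overlap one still gets oddness $\le 2$ by combining near-colourings along $S$), contradicting $\omega(G)\ge 4$. Hence at least one side, say $G_1$, is uncolourable. Next, I would observe that if both $G_1$ and $G_2$ are uncolourable we are in case (i); otherwise $G_2$ is colour-open, hence either isochromatic or heterochromatic, giving the two alternatives in (ii) and (iii). The case where $G_2$ is colourable but admits three or four colouring types must be excluded: here one shows $\omega(G)\le 2$ by the same compatibility argument, since such a $G_2$ is ``flexible enough'' to match whatever a near-$3$-colouring of $G_1$ demands on $S$. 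This reduces everything to analysing uncolourable, isochromatic, and heterochromatic $4$-poles.

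The second main step is the extension claims. For an uncolourable $G_i$, adding two new adjacent vertices and joining the four dangling edges of $S$ to them (as in the inverse of the $4$-join operation of Section~\ref{sec:31}) produces a cubic graph $H_i$; one must check $H_i$ is a snark and that $\zeta(H_i)\ge 4$. Uncolourability of $H_i$ is immediate since $G_i$ sits inside it as an uncolourable $4$-pole. Cyclic $4$-edge-connectivity follows from minimality of $G$: if $H_i$ had a cycle-separating edge-cut of size $\le 3$, one could splice the two sides of that small cut across the $4$-join with $G_{3-i}$ (or re-route) to obtain a smaller snark of oddness $\ge 4$, or else produce a cycle-separating cut of size $\le 3$ in $G$ itself, contradicting $\zeta(G)=4$; girth $\ge 5$ of the extension comes from Theorem~\ref{thm:44}. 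For a heterochromatic $G_2$, the natural extension is to add two \emph{isolated} edges matching the two connectors (so that each $2$-connector of $G_2$ is closed into a single edge), yielding a cubic graph; heterochromaticity guarantees the closure is uncolourable, hence a snark, and again minimality gives $\zeta\ge 4$. For an isochromatic $G_2$, closing each connector with a new vertex (two vertices total) is the analogous move; isochromaticity forces uncolourability of the closure. The only delicate point is that this isochromatic closure might fail to be cyclically $4$-edge-connected, dropping to $\zeta=2$; this is exactly the escape clause recorded in (iii).

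The third step is therefore to analyse precisely when the isochromatic closure of $G_2$ has $\zeta(G_2)=2$ (here I mean $\zeta$ of the closed-up graph, or equivalently a $2$-edge-cut inside the $4$-pole $G_2$ separating its connectors). A $2$-edge-cut in a cubic-except-for-four-$2$-valent-vertices object, together with the minimality of $G$, forces the cut to separate the two connectors of $G_2$, so $G_2$ is a junction of two $4$-poles $A$ and $B$ along two edges, with $A$ carrying one connector and $B$ the other. One then argues that each of $A$ and $B$, regarded as a $4$-pole by adding back the two cut-edges as dangling, must be colour-open — if one of them were uncolourable, say $A$, then $A$ together with the whole of $G_1$ across $S$ would already be an uncolourable proper subgraph giving a smaller snark of oddness $\ge 4$ (after an appropriate $2$- or $4$-vertex completion), again contradicting minimality; and the isochromatic constraint on $G_2$ propagates to say $A$ and $B$ are each colour-open (isochromatic or heterochromatic, in any combination), which is the last sentence of (iii).

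The hard part will be the very first reduction — ruling out two colourable sides, and more subtly ruling out a colourable side of type-count $\ge 3$ — because it requires carefully exhibiting a $2$-factor of $G$ with $\le 2$ odd circuits by gluing (near-)colourings across $S$; this is where the ``edge-colouring and cyclic connectivity'' mixture advertised in the introduction really bites, and where one must be careful about which of the four colouring-types of each side are available and how Kempe switches let one adjust them. The extension/minimality arguments in steps two and three are more routine once one sets up the bookkeeping for how a small cut in $H_i$ or $G_2$ pulls back to a forbidden configuration in $G$, but keeping track of girth $\ge 5$ through the vertex-additions needs a small separate check using Theorem~\ref{thm:44}.
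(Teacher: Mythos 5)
First, a point of reference: the present paper does not prove Theorem~\ref{thm:4decomp-detailed} at all --- it is quoted from the earlier paper~\cite{GMS1} ("the following stronger and more detailed result from~\cite{GMS1}"), so there is no in-paper argument to compare yours against and your proposal must be judged on its own. Its overall architecture (colourability trichotomy, then extension claims via minimality, then the $\zeta(G_2)=2$ escape clause) is the right shape, but it has a genuine gap at its pivotal step: the exclusion of a colourable $G_2$ with three or four colouring types. You propose to show $\omega(G)\le 2$ there by ``matching whatever a near-$3$-colouring of $G_1$ demands on $S$''. That presupposes the uncolourable $4$-pole $G_1$ becomes colourable after deleting two vertices or edges, which nothing in the hypotheses guarantees: $G_1$ may have resistance $3$ or more as a multipole, and then $\rho(G)\ge 3$ and $\omega(G)\ge 4$ no matter how flexible $G_2$ is. Concretely, take a snark $K$ with $\rho(K)\ge 5$, let $G_1=K-\{u,v\}$ for adjacent $u,v$, and let $G_2$ be a colourable $4$-pole realising all four types; then $G_1*G_2$ has oddness at least $4$ and a cycle-separating $4$-edge-cut whose colourable side is not colour-open. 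So the trichotomy cannot follow from $\omega(G)\ge 4$ together with the compatibility argument alone; it must use the minimality of $G$, which your first step never invokes.

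The repair intertwines your first two steps rather than keeping them sequential: since the type set of $G_2$ has at least three elements, it contains a two-element type pair $T$ realised by one of the six small closures of $G_1$ (two adjacent vertices or two isolated edges, in one of three pairings); one proves that this closure is a cyclically $4$-edge-connected snark on fewer than $44$ vertices, invokes minimality to conclude it has resistance $2$, and transfers the resulting near-colouring --- whose induced type on $S$ lies in $T$ and is therefore matched by $G_2$ --- back to $G$ to force $\rho(G)=2$, a contradiction. This makes the cyclic-connectivity upgrade of the extensions, which you dismiss as routine splicing, a prerequisite for the trichotomy rather than an afterthought; it is where most of the work lies (compare the balanced/unbalanced and incomparable cut analysis the present paper needs in Claims 3 and 4 of the proof of Theorem~\ref{thm:main} merely to add two edges to one component). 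A smaller slip: in your third step you argue that neither $A$ nor $B$ can be uncolourable ``by minimality'', but this is immediate since both are subgraphs of the colourable $G_2$; what actually requires an argument is that each has exactly two types, which follows by applying the already-established trichotomy to the cycle-separating $4$-edge-cuts $\delta_G(A)$ and $\delta_G(B)$ (after disposing of the acyclic case via Lemma~\ref{lemma:acyclic}).
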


Here is our main result:

\begin{theorem}\label{thm:main}
The set $\mathcal{M}$ is the complete set of snarks with cyclic
connectivity $4$ and oddness at least $4$ on $44$ vertices.
\end{theorem}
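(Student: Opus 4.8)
The plan is to combine Theorem~\ref{thm:44}, Theorem~\ref{thm:4decomp-detailed}, and Observation~\ref{obs:31_snarks} with a large but structured computer search. By Theorem~\ref{thm:44} every snark with cyclic connectivity $4$ and oddness at least $4$ on $44$ vertices has girth at least $5$, and since it is \emph{not} cyclically $5$-edge-connected (that would contradict minimality only if such small cyclically $5$-connected examples existed, but in any case our target graphs \emph{have} cyclic connectivity exactly~$4$), it admits a cycle-separating $4$-edge-cut $S$. Fix such a cut and let $G_1$, $G_2$ be the two sides. First I would invoke Lemma~\ref{lemma:acyclic}: a connected acyclic $4$-pole has only $2$ vertices, so a cycle-separating cut genuinely splits off two parts each containing a cycle, and in particular each $G_i$ has at least, say, $5$ vertices (using girth~$5$). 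Together with $|G_1|+|G_2|=44$ this already constrains the split, but the real leverage comes from Theorem~\ref{thm:4decomp-detailed}, which says each $G_i$ is either uncolourable-and-completable-to-a-snark-by-adding-two-vertices, or heterochromatic-and-completable-by-two-isolated-edges, or isochromatic with an analogous completion (with one exceptional low-connectivity subcase). In every case $G_i$ embeds into a cyclically $4$-edge-connected snark $\widehat{G_i}$ with $|G_i|+2$ vertices, hence $|\widehat{G_i}|\le 44-|G_{3-i}|+2$. Since each $G_j$ has at least $8$ vertices (the smallest building block $\mathbf{I}$ has $8$, and one checks the uncolourable/heterochromatic/isochromatic $4$-poles cannot be smaller — this is where the ``no bridgeless heterochromatic $4$-pole with fewer than ten vertices'' and ``$\mathbf{I}$ is the smallest connected isochromatic $4$-pole'' remarks from Section~\ref{sec:31} enter), we get $|\widehat{G_i}|\le 44-8+2=38$; a slightly more careful count, separating the three cases of the theorem and tracking whether two vertices or two isolated edges are added, pushes this down to $36$. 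Thus each $\widehat{G_i}$ lies in the precomputed catalogue $\mathcal{S}_{36}$ of all $432\,105\,682$ cyclically $4$-edge-connected snarks on at most $36$ vertices.

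Next I would reverse the construction. Given the catalogue $\mathcal{S}_{36}$, for each ordered pair $(\widehat{G_1},\widehat{G_2})$ with $|\widehat{G_1}|+|\widehat{G_2}|=48$ I remove two adjacent vertices or two nonadjacent edges from each side (the inverse of the ``adding two vertices / two isolated edges'' operation of Theorem~\ref{thm:4decomp-detailed}), obtaining $4$-poles $M_1,M_2$, and then form the $4$-join $M_1*M_2$ over all identifications of the four semiedges, i.e.\ over all permutations in $S_4$. This is exactly the operation defining $\mathcal{M}$ in Section~\ref{sec:31}. The theory above guarantees that \emph{every} snark $G$ with cyclic connectivity $4$, oddness $\ge 4$, and $44$ vertices arises this way: pick a cycle-separating $4$-cut, extend each side to a snark in $\mathcal{S}_{36}$ by Theorem~\ref{thm:4decomp-detailed}, and then $G$ is recovered as one of the $4$-joins of those two snarks. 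Conversely, by Observation~\ref{obs:31_snarks} the set of $4$-joins that are actually cyclically $4$-edge-connected snarks of order $44$ with oddness $\ge 4$ is precisely the $31$-element set $\mathcal{M}$. Hence $\mathcal{M}$ is complete. The one remaining wrinkle is case~(iii)'s exceptional subcase, where $\zeta(G_2)=2$ and $G_2$ is a partial junction of two colour-open $4$-poles; here I would argue separately that such a $G_2$ still decomposes further through a cycle-separating cut of size at most $4$ inside it, so that $G$ still arises as an iterated $4$-join of graphs from $\mathcal{S}_{36}$, or else rule it out directly on vertex-count grounds, since a partial junction of two $4$-poles each of which is at least $8$ vertices again forces the relevant pieces into $\mathcal{S}_{36}$.

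Concretely the key steps, in order, are: (1) from Theorem~\ref{thm:44}, reduce to a $44$-vertex graph of girth $\ge 5$ with a cycle-separating $4$-edge-cut; (2) apply Theorem~\ref{thm:4decomp-detailed} to both sides of the cut, obtaining completions to snarks in $\mathcal{S}_{36}$, with the $\zeta=2$ subcase handled by a secondary decomposition; (3) observe that therefore $G$ is a $4$-join of two members of $\mathcal{S}_{36}$ in the sense of Section~\ref{sec:31}, so $G\in\mathcal{M}$; (4) cite Observation~\ref{obs:31_snarks}, whose validity rests on the computation already described — iterating the $4$-join over all pairs from $\mathcal{S}_{36}$ summing to $48$ vertices and all $24$ semiedge-identifications, testing each of the more than $2\times 10^{13}$ resulting cubic graphs for cyclic $4$-edge-connectivity, $44$ vertices, and oddness $\ge 4$, and finding exactly $31$, all of girth $5$ and oddness exactly $4$. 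The main obstacle, and the part deserving the most care, is step~(2): one must verify that the bounds from Theorem~\ref{thm:4decomp-detailed} really do force \emph{both} completed snarks to have at most $36$ vertices (not just $38$), which requires a tight lower bound on $|G_i|$ in each of the three structural cases, and one must correctly dispatch the exceptional $\zeta(G_2)=2$ subcase of~(iii) without it escaping the catalogue; the rest is either cited from~\cite{GMS1} or delegated to the verified computation underpinning Observation~\ref{obs:31_snarks}.
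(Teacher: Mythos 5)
There is a genuine gap at exactly the step you yourself flag as ``deserving the most care.'' Your claim that a ``slightly more careful count \ldots pushes this down to $36$'' is false, and no counting refinement can rescue it. The critical configuration is the one the paper identifies as the core of the proof: $G_2$ is the $8$-vertex isochromatic $4$-pole $\mathbf{I}$ (which Claim~1 of the paper shows is in fact \emph{forced} for every cycle-separating $4$-edge-cut of a putative counterexample), so $G_1$ has $36$ vertices, and the only extension of $G_1$ guaranteed by Theorem~\ref{thm:4decomp-detailed}(iii) is by \emph{adding two vertices}, producing a $38$-vertex snark that lies outside the catalogue $\mathcal{S}_{36}$. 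To stay inside the catalogue one must instead show that $G_1$ can be completed to a cyclically $4$-edge-connected snark of order $36$ by adding two \emph{edges} between the four endpoints of the cut, and this is not given by any cited theorem: it is the content of Claims~2--4 of the paper's proof and the subsequent two-case analysis of incomparable balanced $3$-edge-cuts in $G_1$ (showing $\zeta(G_1)=3$, extracting two incomparable balanced $3$-edge-cuts, and then a vertex-counting and path-disjointness argument in the four quadrants $X_i\cap Y_j$ to exhibit a pairing of the cut-vertices that kills every surviving cycle-separating $3$-edge-cut). Your proposal contains no substitute for this argument, so the reduction of an arbitrary $G\in\mathcal{N}$ to a $4$-join of two members of $\mathcal{S}_{36}$ is not established.

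A secondary, smaller gap: your treatment of the exceptional $\zeta(G_2)=2$ subcase of (iii) is a placeholder (``argue separately \ldots or else rule it out on vertex-count grounds''). In the paper this subcase cannot be ruled out by counting; it forces $G_2$ to be a partial junction of two copies of $\mathbf{I}$, whence $G=(P\odot P)\otimes H$ with $H$ a $30$-vertex snark, and this family must be eliminated by an \emph{additional} computer search beyond the one underlying Observation~\ref{obs:31_snarks}. Your overall architecture (decompose via Theorem~\ref{thm:4decomp-detailed}, land in $\mathcal{S}_{36}$, invoke the exhaustive $4$-join computation) matches the paper's, but the two places where the paper does real work are precisely the two places your proposal waves at without an argument.
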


Let $\mathcal{N}$ be the set of all snarks with cyclic
connectivity $4$ and oddness at least $4$ on $44$ vertices. To
prove Theorem~\ref{thm:main} it suffices to show that
$\mathcal{N}\subseteq\mathcal{M}$. The general strategy of the
proof is to show that every snark $G\in\mathcal{N}$ is a
$4$-join of two cyclically $4$-edge-connected snarks of order
at most 36. As soon as this is done, one can perform $4$-joins
in all possible ways that give rise to a cyclically
$4$-edge-connected snark of order $44$, identify those whose
oddness equals $4$, and check whether all of them belong to
$\mathcal{M}$.

In order to apply this strategy we employ
Theorem~\ref{thm:4decomp-detailed}. It implies that we can
split each $G\in\mathcal{N}$ into two subgraphs $G_1$ and $G_2$
each of which can be extended to cyclically a
$4$-edge-connected snark by adding at most two vertices. The
difficult part of the proof arises when one of the subgraphs,
namely $G_1$, has 36 vertices and $G_2$ is an isochromatic
$4$-pole on $8$ vertices (see (iii) of
Theorem~\ref{thm:4decomp-detailed}). Adding two adjacent
vertices to $G_1$ is now useless because the list of all
cyclically $4$-edge-connected snarks is known only up to 36
vertices~\cite{BrGHM,GMS1}. Instead, we show that it is
possible to add two isolated edges to $G_1$ in such a way that
a cyclically $4$-edge-connected snark of order 36 is created. A
detailed analysis that precedes this step is the core of the
proof of Theorem~\ref{thm:main}, which now follows.

\begin{proof}
Let $G$ be a snark with cyclic connectivity 4 and oddness at
least 4 on 44 vertices. We wish to prove that
$G\in\mathcal{M}$. Suppose the contrary. In order to derive a
contradiction we first establish four claims.

\medskip\noindent
\textbf{Claim 1.} \textit{Every cycle-separating $4$-edge-cut
of $G$ determines two components, one uncolourable on $36$ and
one isochromatic on $8$ vertices. Both components are
$2$-edge-connected.}

\medskip\noindent
Proof of Claim~1. Let $S$ be an arbitrary cycle-separating
$4$-edge-cut in $G$ and let $G_1$ and $G_2$ be the components
of $G-S$. Since $S$ is a minimum cycle-separating edge-cut,
both $G_1$ and $G_2$ are easily seen to be $2$-edge-connected.
>From Theorem~\ref{thm:4decomp-detailed} we deduce that one of
the components, say $G_1$, is uncolourable. If $G_2$ was either
uncolourable or heterochromatic, then it would have at least 10
vertices and hence $G_1$ has at most $34$ vertices. Using
Theorem~\ref{thm:4decomp-detailed} again we could conclude that
both $G_1$ and $G_2$ can be extended to snarks of order at most
$36$, so $G$ would be a $4$-join of two snarks from
$\mathcal{S}_{36}$ and therefore a member of $\mathcal{M}$.
This contradiction shows that $G_2$ is isochromatic.

Now we prove that the isochromatic component produced by an
arbitrary cycle-sepa\-rating $4$-edge-cut has only eight
vertices. Suppose to the contrary that there exists a cycle
separating edge-cut $R$ in $G$ such that the isochromatic
component, denoted by $J$, has at least ten vertices. Choose
$R$ to minimise the number of vertices of $J$. Clearly, the
other component $K$ of $G-R$ has at most $34$ vertices and, by
the first part of the proof, it is uncolourable. It follows
that $K$ also has at least ten vertices, so $J$ has at most
$34$ vertices too. Theorem~\ref{thm:4decomp-detailed} further
implies that $K$ can be extended to a cyclically
$4$-edge-connected snark $\tilde K$ by adding two vertices. If
$\zeta(J)\ge 3$, the same theorem implies that $J$ can also be
extended to a cyclically $4$-edge-connected snark $\tilde J$ by
adding two vertices. Since both $\tilde J$ and $\tilde K$ have
order at most 36, $G$ belongs to $\mathcal{M}$ -- a
contradiction. Therefore $J$ has a $2$-edge-cut. By
Theorem~\ref{thm:4decomp-detailed}~(iii), $J$ is a partial
junction of two colour-open $4$-poles $J_1$ and~$J_2$. Since
both $\delta_G(J_1)$ and $\delta_G(J_2)$ are cycle-separating
$4$-edge-cuts, both $J_1$ and $J_2$ must be isochromatic. If
any of them had more than eight vertices, then the
corresponding edge-cut would contradict the choice of $R$.
Therefore $J$ is a partial junction of two copies of the
isochromatic $4$-pole $\mathbf{I}$ on eight vertices, so $J$
has 16 vertices and $K$ has 28 vertices. It follows that
$G$ can be expressed in the form $(P\odot P)\otimes H$ where
$P$ is the Petersen graph, $H$ is a snark on 30 vertices, and
$L_1\odot L_2$ denotes a $4$-join of cubic graphs $L_1$ and
$L_2$ which employs $4$-poles resulting from the removal of two
adjacent vertices from both $L_1$ and~$L_2$, while $Q_1\otimes
Q_2$ denotes a $4$-join of cubic graphs $Q_1$ and $Q_2$ which
employs a $4$-pole obtained from $Q_1$ by removing two
nonadjacent edges and a $4$-pole obtained from $Q_2$ by
removing two adjacent vertices. Using a computer we have
constructed all graphs $G$ arising in this way and verified
that each of them either belongs to $\mathcal{M}$ or has
oddness at most~$2$. This contradiction establishes Claim~1.

\medskip\noindent
\textbf{Claim 2.} \textit{Every $4$-edge-cut in $G$ separates a
subgraph with at most eight vertices from the rest of $G$.}

\medskip\noindent
Proof of Claim~2. Let $Q$ be a $4$-edge-cut in $G$. If one of
the components of $G-Q$ is acyclic, then, by
Lemma~\ref{lemma:acyclic}, this component has two vertices. If
$Q$ is cycle-separating, then the conclusion follows from
Claim~1. Claim~2 is proved.

\medskip

Fix a cycle-separating $4$-edge-cut $S$ in $G$; we will refer
to $S$ as the \textit{principal} $4$-edge-cut of~$G$. As
before, let $G_1$ and $G_2$ be the components of $G-S$ where
$G_1$ is uncolourable. By Claim~1, the other component $G_2$ is
isochromatic on eight vertices. Let $A$ be the set of
end-vertices of $S$ in $G_1$. Since $S$ is independent, we have
$|A|=4$.

\medskip

The remainder of the proof is devoted to proving the following
fact:
\begin{itemize}
\item[] \textit{The component $G_1$ can be extended to a cyclically
$4$-edge-connected snark  by adding two edges between the
vertices of $A$.}
\end{itemize}
As a consequence, $G$ will be a $4$-join of
two graphs from $\mathcal{S}_{36}$, and therefore a member of
$\mathcal{M}$. This will provide a final contradiction.

\medskip\noindent
\textbf{Claim 3.}\textit{$G_1$ is cyclically $3$-edge-connected
and has a cycle-separating $3$-edge-cut.}

\medskip\noindent
Proof of Claim~3. If $\zeta(G_1)\ge 4$, then adding to $G_1$
two edges joining the vertices of $A$ in an arbitrary manner
would produce a cyclically $4$-edge-connected snark of order
36. Consequently, $G$ would belong to $\mathcal{M}$. Therefore
$\zeta(G_1)\le 3$. To prove that $\zeta(G_1)\ge 3$ suppose to
the contrary that $G_1$ has a cycle-separating 2-edge-cut $U$,
and let $G_{11}$ and $G_{12}$ be the two components of $G_1-U$.
Since $G$ is cyclically $4$-edge-connected, two edges of $S$
join $G_2$ to $G_{11}$ and other two edges of $S$ join $G_2$ to
$G_{12}$. Thus $\delta_G(G_{11})$ is a cycle-separating
$4$-edge-cut which separates $G_{11}$ from $G_{12}\cup G_2$.
Since $G_{12}\cup G_2$ contains more then eight vertices,
Claim~1 implies that $G_{11}$ is an isochromatic dipole on
eight vertices. Likewise, $G_{12}$ is an isochromatic dipole on
eight vertices. Put together, $G$ has altogether $24$ vertices,
which is again a contradiction. Therefore $\zeta(G_1)=3$.
Finally, $G_1$ is homeomorphic to a certain cubic graph $\bar
G_1$ with $\zeta(\bar G_1)=\zeta(G_1)$ on 32 vertices, so $G_1$
is not a subdivision of the complete graph $K_4$ and therefore
contains a cycle-separating $3$-edge-cut. This establishes
Claim~3.

\medskip

Before we can proceed we need two definitions. First, a
cycle-separating $3$-edge-cut $Q$ in $G_1$ will be called
\textit{balanced} if each component of $G_1-Q$ is incident with
exactly two edges of the principal $4$-edge-cut $S$. Otherwise,
$Q$ will be called \textit{unbalanced}. Second, let $R$ and $T$
be two cycle-separating $3$-edge-cuts in $G_1$. Let $X_1$ and
$X_2$ be the components of $G_1-R$ and let $Y_1$ and $Y_2$ be
the components of $G_1-T$. The edge-cuts $R$ and $T$ in $G_1$
will be called \textit{comparable} if $X_i\subseteq Y_j$ or
$Y_j\subseteq X_i$ for some $i,j\in\{1,2\}$.

\medskip\noindent
\textbf{Claim 4.} \textit{$G_1$ contains two incomparable balanced
$3$-edge-cuts.}

\medskip\noindent
Proof of Claim~4. Let us first observe that if $Q$ is an
arbitrary unbalanced $3$-edge-cut in $G_1$, then adding any two
edges between the vertices of $A$ in an arbitrary manner will
produce a cubic graph $\tilde G_1$ where $Q$ has ceased to be
an edge-cut. It follows that if every cycle-separating
$3$-edge-cut in $G_1$ is unbalanced, then $\tilde G_1$ is a
cyclically $4$-edge-connected snark and $G$ is a $4$-join of
$\tilde G_1$ with the Petersen graph. Since $G_1$ has $36$
vertices, $G\in\mathcal{M}$ and we have arrived at a
contradiction. Therefore $G_1$ must contain a balanced
$3$-edge-cut.

\begin{figure}[htbp]
  \centerline{
   \scalebox{0.4}{
     \input{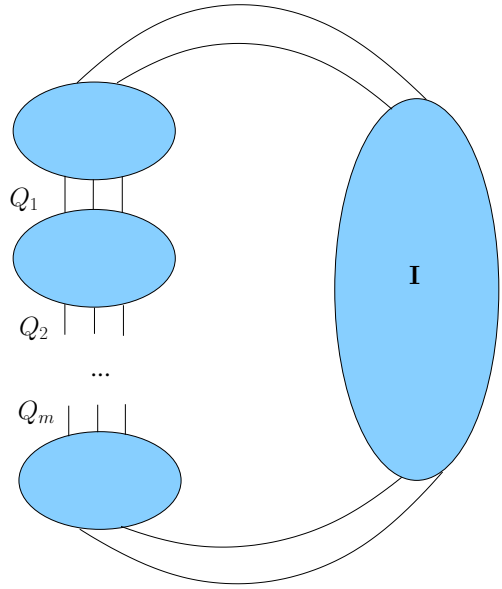}
   }
  }
   \caption{Comparable balanced $3$-edge-cuts}
   \label{fig:comparable}
\end{figure}

If every pair of balanced $3$-edge-cuts in $G_1$ is comparable,
we can arrange the balanced $3$-edge-cuts in an increasing
linear order, say $Q_1, \ldots, Q_m$. Clearly, there is a
component $H_1$ of $G_1-Q_1$ and a component $H_m$ of $G_1-Q_m$
such that both of them are disjoint from all of $Q_1, \ldots,
Q_m$. It is easy to see that $H_1$ contains two vertices of $A$
while $H_m$ contains the other two, see
Figure~\ref{fig:comparable}. Thus we can connect the two vertices
of $H_1\cap A$ to those of $H_m\cap A$ by two edges, producing
a cyclically $4$-edge-connected snark $\tilde G_1$. Again, $G$
is a $4$-join of $\tilde G_1$ with the Petersen graph, so $G$
belongs to $\mathcal{M}$ contrary to the assumption. Therefore
$G_1$ contains two incomparable balanced $3$-edge-cuts. This
proves Claim~4.

\begin{figure}[htbp]
  \centerline{
   \scalebox{0.6}{
     \input{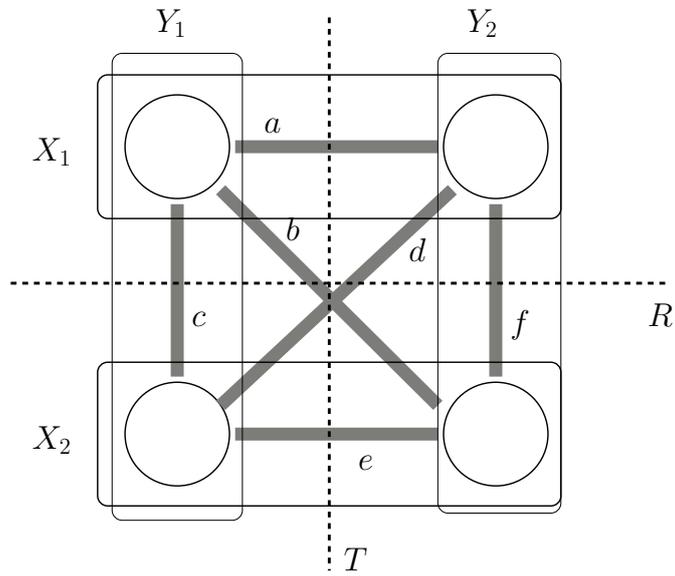}
   }
  }
   \caption{Crossing edge-cuts $R$ and $T$}
   \label{fig:sets}
\end{figure}

\bigskip

In the rest the proof we explore the structure of $G_1$ arising
from a pair of incomparable balanced $3$-edge-cuts. Let $R$ and
$T$ be any two incomparable balanced 3-edge-cuts in $G_1$.
Clearly, $G_1-R$ has two components, say $X_1$ and $X_2$, and
$G_1-T$ has two components, say $Y_1$ and $Y_2$. The definition
of comparable edge-cuts readily implies that each of the
subgraphs $X_i\cap Y_j$ is non-empty. Let $a$ be the number of
edges between $X_1\cap Y_1$ and $X_1\cap Y_2$, $b$~the number
of edges between $X_1\cap Y_1$ and $X_2\cap Y_2$, $c$~the
number of edges between $X_1\cap Y_1$ and $X_2\cap Y_1$, $d$
the number of edges between $X_1\cap Y_2$ and $X_2\cap Y_1$,
$e$ the number of edges between $X_2\cap Y_1$ to $X_2\cap Y_2$,
and finally $f$ the number of edges between $X_1\cap Y_2$ and
$X_2\cap Y_2$; see Figure~\ref{fig:sets}.

Since $X_1$, $X_2$, $Y_1$ and $Y_2$ are all connected, we have
\begin{align}\label{eq:aecf}
a\ge 1, \quad e\ge 1, \quad c\ge 1, \quad f\ge 1.
\end{align}
Next,
\begin{align}\label{eq:cutR}
|R|=|\delta_{G_1}(X_1)|=c+f+b+d=3
\end{align}
and
\begin{align}\label{eq:cutT}
|T|=|\delta_{G_1}(Y_1)|=a+e+b+d=3.
\end{align}
By combining (\ref{eq:aecf}) and (\ref{eq:cutR}) we can further
conclude that
\begin{align}\label{eq:bd}
b+d\le 1.
\end{align}

We now consider two cases according to whether there exists a
set $X_i\cap Y_j$ such that $X_i\cap X_j\cap A=\emptyset$ or
not.

\medskip

\noindent \textbf{Case 1:} \textit{There exists a subgraph
$X_i\cap Y_j$, with $i,j\in\{1,2\}$, such that $X_i\cap X_j\cap
A=\emptyset$.} In view of symmetry we can clearly assume that
$X_1\cap X_2\cap A=\emptyset$. In this situation $X_1\cap Y_2$
is incident with two edges of $S$, because $R$ is balanced, and
$X_2\cap Y_1$ is incident with the other two edges of $S$,
because $T$ is balanced. It follows that $|X_2\cap Y_2\cap
A|=0$ as well.

With (\ref{eq:aecf}) in mind, we first prove that $a=e=c=f=1$.
Suppose to the contrary that one of these values is strictly
greater than $1$. In view of symmetry we can assume that $a>1$.
Now (\ref{eq:aecf}) and (\ref{eq:cutT}) forces $a=2$, $e=1$,
$b=0$, and $d=0$, which together with (\ref{eq:cutR}) implies
that either $c=2$ and $f=1$, or $c=1$ and $f=2$. In the former
case, $|\delta_G(X_2\cap Y_2)|=b+e+f=2$ contrary to the fact
that $G$ is $3$-edge-connected, while in the latter case
$X_2\cap Y_1$ is separated from the rest of $G_1$ by $c+d+e=2$
edges, contradicting Claim~3. Therefore $a=e=c=f=1$.

Now, (\ref{eq:cutR}) and (\ref{eq:bd}) imply that $b+d=1$.
However, $b\ge 1$ for otherwise $|\delta_G(X_1\cap
Y_1)|=a+b+c=2$, contrary to the fact that $G$ is
$3$-edge-connected. So $b=1$ and $d=0$. It follows that
$|\delta_G(X_1\cap Y_1)|=a+b+c=3$ and $|\delta_G(X_2\cap
Y_2)|=b+e+f=3$. Since $G$ is cyclically $4$-edge-connected,
both $X_1\cap Y_1$ and $X_2\cap Y_2$ are acyclic, and
therefore, by Lemma~\ref{lemma:acyclic}, both consist of a
single vertex. Further, $|\delta_G(X_1\cap Y_2)|=2+a+d+f=4$ and
$|\delta_G(X_2\cap Y_1)|=2+c+d+e=4$, so $|X_1\cap Y_2|\ge 2$
and $|X_2\cap Y_1|\ge 2$. By Claim~2, one of the components
determined by $\delta_G(X_1\cap Y_2)$ has at most eight
vertices. Since the number of vertices of $G$ lying outside
$X_1\cap Y_2$ is
$$|G_2|+|X_2\cap Y_1|+|X_1\cap Y_1|+|X_2\cap Y_2|\ge
8+2+1+1=12,$$ we conclude that $|X_1\cap Y_2|\le 8$. Similarly,
$|X_2\cap Y_1|\le 8$. Summing up,
$$|G_1|\le |X_1\cap Y_1|+|X_2\cap Y_2| + |X_1\cap Y_2|+|X_2\cap Y_1|\le
1+1+8+8<36,$$ which contradicts Claim~1. This establishes Case~1.

\medskip

\noindent \textbf{Case 2:} \textit{$X_i\cap X_j\cap
A\ne\emptyset$ for each $i,j\in\{1,2\}$.} Clearly, this is only
possible when each of the subgraphs $X_i\cap Y_i$ is incident
with exactly one edge of the principal $4$-edge-cut $S$. We may
assume that $A=\{a_1,a_2,a_3,a_4\}$ where $a_1$ lies in
$X_1\cap Y_1$, $a_2$ lies in $X_1\cap Y_2$, $a_3$ lies in
$X_2\cap Y_1$, and $a_4$ lies in $X_2\cap Y_2$.

For each $X_i\cap Y_i$ we count the number of edges of $R\cup
T$ this subgraph is incident with.  According to
(\ref{eq:aecf}) we obtain
\begin{align}\label{eq:cobdr}
a+b+c\ge 2, \quad a+d+f\ge 2, \quad c+d+e\ge 2, \quad b+e+f\ge 2.
\end{align}
If all the inequalities in (\ref{eq:cobdr}) are strict, then summing
the first two of them yields
$$6\le 2a+(b+c+d+f)=2a+|R| = 2a +3,$$
whence $a\ge 2$. Similarly, summing the latter two inequalities
implies that $e\ge 2$. But then $$|T|=a+b+d+e\ge 4,$$ which is
a contradiction.

Therefore at least one of the inequalities in (\ref{eq:cobdr})
holds with equality. Due to symmetry we may assume that
$a+b+c=2$. From (\ref{eq:aecf}) we now infer that $a=c=1$ and
$b=0$. If we plug these values into (\ref{eq:cutR}) and
(\ref{eq:cutT}), we get $d+f=2=d+e$. Since $d\le 1$ on account
of (\ref{eq:bd}), we conclude that either $d=e=f=1$, or $d=0$
and $e=f=2$.

First assume that $d=e=f=1$.  In this case $|\delta_G(X_1\cap
Y_1)| = 3 = |\delta_G(X_2\cap Y_2)|$, so $|X_1\cap Y_1| = 1
=|X_2\cap Y_2|$ by Lemma~\ref{lemma:acyclic}. Furthermore,
$|\delta_G(X_1\cap Y_2)| = |\delta_G(X_2\cap Y_1)| =4$. By
Claim~2, one of the components determined by $\delta_G(X_1\cap
Y_2)$ has at most eight vertices. As in Case~1, the number of
vertices of $G$ outside $X_1\cap Y_2$ is at least $12$, so
$|X_1\cap Y_2|\le 8$. Similarly $|X_2\cap Y_1|\le 8$, and
therefore $|G_1|\le 1+1+8+8<36$, which contradicts Claim~1.

Next assume that $d=0$ and $e=f=2$. Recall that $a=c=1$ and
$b=0$. It follows that $|\delta_G(X_1\cap Y_1)|=3$ which means that
$X_1\cap X_1$ consists of a single vertex $a_1$. Further,
$|\delta_G(X_1\cap Y_2)|=4$ and  $|\delta_G(X_2\cap Y_1)|=4$,
so $|X_1\cap Y_2|\le 8$ and $|X_2\cap Y_1|\le 8$. Also,
$|\delta_G(X_2\cap Y_2)|=5$. In fact, Claim~1 implies that
$X_1\cap Y_2$ and $X_2\cap Y_1$ are isochromatic $4$-poles on
eight vertices, because $R$ and $T$ are cycle-separating
edge-cuts and hence $X_1\cap Y_2$ and $X_2\cap Y_1$ must
contain circuits.

We show that $X_2\cap Y_2$ is $2$-edge-connected. If $X_2\cap
Y_2$ was disconnected, it would have a component $B$ with
$|\delta_G(B)|\le 2$, contradicting the fact that $G$ is
$3$-edge-connected. If there was a bridge in $X_2\cap Y_2$,
then the bridge would join a subgraph $B_1$ with
$\delta_G(B_1)=3$ to a subgraph $B_2$ with $\delta_G(B_2)=4$.
By Lemma~\ref{lemma:acyclic} and Claim~2, $|B_1|=1$ while
$|B_2|\le 8$, so $|G_1|\le 1+8+8+1+8<36$, contrary to Claim~1.
Therefore $X_2\cap Y_2$ is $2$-edge-connected.

Let us now extend $G_1$ to a cubic graph $\tilde G_1$ by adding
the edges $a_1a_4$ and $a_2a_3$. Since $G_1$ is uncolourable,
$\tilde G_1$ is a snark. We wish to show that $\tilde G_1$ is
cyclically $4$-edge-connected. To this end, observe that a
cycle-separating $3$-edge-cut of $G_1$ that separates either
$a_1$ from $a_4$ or $a_2$ from $a_3$ fails to be an edge-cut in
$\tilde G_1$. Therefore every cycle-separating $3$-edge-cut
that might survive from $G_1$ to $\tilde G_1$ separates
$\{a_1,a_4\}$ from $\{a_2,a_3\}$. We prove that such a cut does
not exist, displaying four edge-disjoint
$\{a_1,a_4\}$-$\{a_2,a_3\}$-paths in $G_1$.

\begin{figure}[htbp]
  \centerline{
   \scalebox{0.5}{
     \input{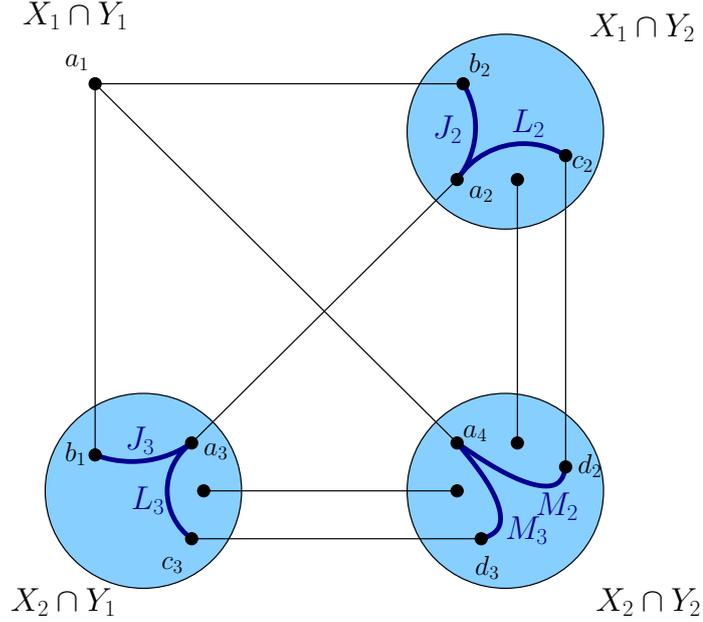}
   }
  }
   \caption{The required paths in Case 2}
   \label{fig:case2}
\end{figure}

Let $a_1b_2$ and $a_1b_3$ be the edges that join $a_1$ to a
vertex $b_2$ in $X_1\cap Y_2$ and to a vertex $b_3$ in $X_2\cap
Y_1$, respectively. Let $c_2d_2$ be an edge joining a vertex
$c_2$ in $X_1\cap Y_2$ to a vertex $d_2$ in $X_2\cap Y_2$, and
let $c_3d_3$ be an edge joining a vertex $c_3$ in $X_2\cap Y_1$
to a vertex $d_3$ in $X_2\cap Y_2$ (see Figure~\ref{fig:case2}).
Since $X_1\cap Y_2$, $X_2\cap Y_1$, and $X_2\cap Y_2$ are all
$2$-edge-connected, there exist
\begin{itemize}
\item[--] edge-disjoint paths $J_2$ and $L_2$ in $X_1\cap
    Y_2$ from $a_2$ to $b_2$ and $c_2$, respectively;

\item[--] edge-disjoint paths $J_3$ and $L_3$ in $X_2\cap
    Y_1$ from $a_3$ to $b_3$ and $c_3$, respectively; and
\item[--] edge-disjoint paths $M_2$ and $M_3$ from $a_4$ to
    $d_2$ and $d_3$, respectively.
\end{itemize}
It follows that $a_1b_2J_2\inv$, $a_1b_3J_3\inv$,
$L_2c_2d_2M_2\inv$, and $L_3c_3d_3M_3\inv$ are four
edge-disjoint paths joining $\{a_1,a_4\}$ to $\{a_2,a_3\}$ in
$G_1$. As a consequence, $\tilde G_1$ is a cyclically
$4$-edge-connected snark of order $36$, and therefore $G\in
\mathcal{M}$. This final contradiction establishes the theorem.
\end{proof}

\section{Further computational results}
\label{sect:properties}

In addition to determining the complete set $\mathcal{M}$ of
snarks of oddness at least 4 with cyclic connectivity 4 and
minimum number of vertices, we have also generated a
considerable number of snarks of oddness at least $4$ with
orders ranging from 46 to 52. In order to produce as many
nonisomorphic snarks as possible we have applied the $4$-join
operation in all possible ways to any pair of cyclically
$4$-edge-connected snarks with at most 36 vertices as long as
the resulting graph had at most 52 vertices and computed its
oddness. To extend the set we have further applied I-extensions
and I-reductions in all possible ways to the snarks of oddness
at least 4 constructed by $4$-joins until no new snarks of
oddness at least 4 were found. An I-\textit{extension} of a
cubic graph subdivides two edges $e$ and $f$ of $G$ with a new
vertex $v_e$ and $v_f$, respectively, and adds a new edge
between $v_e$ and $v_f$; an I-\textit{reduction} is the inverse
of an I-extension. The combination of $4$-joins, I-extensions,
and I-reductions produced a set 887\,152 nonisomorphic snarks of
orders from 46 to 52, all of oddness $4$. None of the produced
snarks had oddness greater than~4 or cyclic connectivity
greater than $4$.

The counts of the number of snarks of oddness at least 4 which
our method yielded can be found in
Table~\ref{table:counts_snarks_oddness4}. While the set of 31
snarks of order 44 is complete by Theorem~\ref{thm:44}, it is
very unlikely that this is the case for any of the sets of
snarks of oddness $4$ of orders between 46 and $52$.

\begin{table}[htb!]
		\centering 
		 \renewcommand{\arraystretch}{1.1} 
		\begin{tabular}{crrr} 
			\toprule 
			Order & Girth 4  & Girth 5 & Total  \\ 
			\midrule 
44 & 0 & 31 & 31 \\
46 & $\geq$\,0 & $\geq$\,484 & $\geq$\,484 \\
48 & $\geq$\,1 112 & $\geq$\,4 793 & $\geq$\,5 905 \\
50 & $\geq$\,27 720 & $\geq$\,39 270 & $\geq$\,66 990 \\
52 & $\geq$\,457 285 & $\geq$\,356 488 & $\geq$\,813 773 \\
			\bottomrule 
		\end{tabular}
	\caption{Counts of the smallest snarks of oddness 4 and
             cyclic connectivity 4.}\label{table:counts_snarks_oddness4}
	\end{table} 	

In \cite[Theorem 12]{LMMS} it was shown that if we allow
trivial snarks, the smallest one with oddness greater than 2
has 28 vertices and oddness 4. There are exactly three such
snarks, one with cyclic connectivity 3 and two with cyclic
connectivity 2, all three having girth $5$ (see~\cite{G} and
\cite[Corrigendum]{LMMS}). Since small snarks with oddness $4$
-- irrespectively of their connectivity -- can be useful for a
better understanding of oddness, we include
Table~\ref{table:counts_triv_snarks_oddness4} which lists the
counts of all trivial snarks of oddness 4 with girth
at least $4$ of orders from 28 to 34. The graphs from
Table~\ref{table:counts_triv_snarks_oddness4} were generated by
the first author in~\cite{G}. Restricting to girth at least $4$
is reasonable because every snark with oddness $4$ that
contains a triangle or a digon must arise from a smaller snark
with the same oddness in a straightforward manner
\cite[Lemma~1]{LMMS}.


\begin{table}[htb!]
		\centering 
		\small
		 \renewcommand{\arraystretch}{1.1} 
		\begin{tabular}{c|rrr|rrrr} 
			\toprule 
		 \multirow{2}{*}{Order} & \multicolumn{3}{c}{Girth 4}  & \multicolumn{3}{c}{Girth 5}\\
		  & Connectivity 2 & Connectivity 3 & Total & Connectivity 2 & Connectivity 3 & Total \\
			\midrule 
28 & 0 & 0 & 0 & 2 & 1 & 3 & \\
30 & 0 & 0 & 0 & 9 & 4 & 13 & \\
32 & 24 & 11 & 35 & 33 & 21 & 54 & \\
34 & 315 & 175 & 490 & 139 & 138 & 277 & \\
			\bottomrule 
		\end{tabular}
	\caption{Counts of the smallest (trivial) snarks of oddness
4.}\label{table:counts_triv_snarks_oddness4}
	\end{table} 	

A glance at Table~\ref{table:counts_snarks_oddness4} reveals
that no snarks of oddness $4$ and girth $4$ on 46 vertices have
been found. A similar phenomenon occurs for the trivial snarks
in Table~\ref{table:counts_triv_snarks_oddness4}: there are no
snarks of oddness $4$ and girth $4$ of order 30. In contrast to
orders 44 and 28, we do not have any theoretical argument that
would exclude the existence of such snarks.

The graphs from Table~\ref{table:counts_snarks_oddness4} can be
downloaded from the \textit{House of Graphs}~\cite{BCGM} at
\url{http://hog.grinvin.org/Snarks} and the snarks of oddness 4
on 44 and 46 vertices from
Table~\ref{table:counts_snarks_oddness4} can be inspected at
the database of interesting graphs from the \textit{House of Graphs} by
searching for the keywords ``nontrivial snarks * oddness 4''.

\medskip

In the remainder of this section, we discuss several invariants
of the graphs from Tables~\ref{table:counts_snarks_oddness4}
and~\ref{table:counts_triv_snarks_oddness4}, again divided into
two groups -- uncolourability measures and general invariants.
All invariant values have been computed by two independent
programs or by programs which were already extensively tested
in earlier research. For example, the two independent programs
that we have used to compute the oddness of the graphs can be
downloaded from~\cite{oddness-site}. In the Appendix we
describe how some of the graphs from
Table~\ref{table:counts_snarks_oddness4} which appear
particularity interesting can be obtained.

\subsection{Resistance and other measures of uncolourability}

We begin our discussion with invariants which, in a certain
sense, measure uncolourability of cubic graphs. For all snarks
from Tables~\ref{table:counts_snarks_oddness4}
and~\ref{table:counts_triv_snarks_oddness4} we have determined
their resistance, perfect matching index, and the invariants
$\gamma_2$ and $\mu_3$. All of them have been defined earlier
in this paper and their values were discussed for the 31 snarks
in $\mathcal{M}$ in Section~\ref{sec:31}. We also know that all
snarks from Tables~\ref{table:counts_snarks_oddness4}
and~\ref{table:counts_triv_snarks_oddness4} have weak oddness
$4$ because this is true in general for all snarks  of oddness
$4$. The remaining invariants have been computed with the help
of a computer.

We start with the resistance of the snarks that we have
constructed.

\begin{observation}\label{obs:resistance4_52v}
All snarks from {\rm Table~\ref{table:counts_snarks_oddness4}}
and~{\rm \ref{table:counts_triv_snarks_oddness4}} have
resistance $3$, except for six snarks on $52$ vertices, which
have resistance $4$.
\end{observation}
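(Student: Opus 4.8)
The plan is to first restrict the possible values of the resistance by means of the elementary theory recalled in Section~\ref{sec:prelim}, and then to settle the remaining dichotomy by a direct computation on each of the graphs listed in the two tables.

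First I would note that every snark $G$ occurring in Table~\ref{table:counts_snarks_oddness4} or Table~\ref{table:counts_triv_snarks_oddness4} has oddness exactly~$4$: this holds by construction for the graphs obtained from $4$-joins, I-extensions and I-reductions, and for the trivial snarks of orders $28$ through $34$ it is recorded in~\cite{LMMS, G}. Since $\rho(G)\le\omega(G)$ for every bridgeless cubic graph, we get $\rho(G)\le 4$. On the other hand, the Parity Lemma (Theorem~\ref{lemma:parity}) forbids $\rho(G)=1$, and a standard Kempe-chain recolouring argument shows that $\rho(G)=2$ precisely when $\omega(G)=2$; as $\omega(G)=4$, this rules out $\rho(G)\in\{0,1,2\}$. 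Hence $\rho(G)\in\{3,4\}$ for every graph in the two tables, and the only question left is which of the two values is attained in each case.

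The remaining step is computational. For every graph $G$ in the tables one tests whether some set of three edges of $G$ can be deleted so that the resulting graph is $3$-edge-colourable: if such a triple exists then $\rho(G)=3$, and if no such triple exists then, by the dichotomy just established, $\rho(G)=4$. Carried out over all the graphs in Table~\ref{table:counts_snarks_oddness4} and Table~\ref{table:counts_triv_snarks_oddness4} -- and cross-checked by a second, independent implementation -- this test returns $\rho(G)=3$ in every instance except for exactly six graphs on $52$ vertices, where no deletion of three edges yields a colourable graph; those six therefore have $\rho(G)=4$. Because resistance can equivalently be measured by the number of deleted vertices~\cite[Theorem~2.7]{Steffen:class}, the same statement holds with ``edges'' replaced by ``vertices''.

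I expect the main obstacle to be the scale of the search rather than any mathematical subtlety: the two tables together contain nearly $900\,000$ graphs, so the colourability-after-deletion test has to be implemented efficiently -- iterating over the $\binom{|E(G)|}{3}$ edge triples with early termination as soon as a colourable residue appears, and confirming the six exceptional graphs by an independent run. No separate structural description of the six exceptions is required beyond verifying, again by computer, that none of their $3$-edge deletions is colourable.
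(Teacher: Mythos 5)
Your proposal is correct and follows essentially the same route as the paper: the observation is established by computer, with the values of $\rho$ determined by exhaustive colourability tests (run with two independent programs), and the theoretical bracketing $\rho(G)\in\{3,4\}$ you derive from $\rho\le\omega=4$, the Parity Lemma, and the equivalence $\rho=2\Leftrightarrow\omega=2$ is exactly the reduction the paper already records in Sections~2.3 and~3. The only difference is that you make this bracketing explicit as a preliminary step to limit the search, which is a sensible implementation detail rather than a different argument.
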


The first example of a cyclically $4$-edge-connected snark with
resistance $4$ on 52 vertices was constructed by Lukot\!'ka et
al.\ \cite[Section~7]{LMMS}. The same snark is depicted and
discussed by Jin and Steffen in~\cite[Figure~3]{LiSt}. Two more
examples of order 52 can be easily obtained if we replace one
or both copies of $\mathbf{H}_1$ contained in this snark with
$\mathbf{H}_2$; all three of them are included among the
mentioned six snarks. Since we have no guarantee that these are
the smallest nontrivial snarks with resistance $4$, we offer
the following problem.

\begin{problem}\label{problem:smallest_resist4}
What is the smallest order of a
snark with resistance $4$? What is the smallest order of a
nontrivial snark with resistance $4$?
\end{problem}

A snark with oddness $4$ and connectivity $2$ on $40$
vertices was constructed in \cite[Section~6]{LMMS}. We have
verified that its resistance is $4$, so the upper bound for the
first question in Problem~\ref{problem:smallest_resist4} is $40$. On the other hand,
Observation~\ref{obs:resistance4_52v} implies that the lower
bound is $36$.

All nontrivial snarks with resistance $3$ known to us have
weak oddness $4$ (and hence oddness $4$ as well). It is
therefore tempting to ask the following questions.

\begin{problem} 
Does there exist a nontrivial
snark with resistance $3$ and weak oddness greater than $4$? Can the
difference be arbitrarily large?
\end{problem}

Allie~\cite{Al} constructed nontrivial snarks
demonstrating that the difference $\omega-\omega'$ can be
arbitrarily large. Unfortunately, all of them have resistance
$3$ and weak oddness $4$ while their oddness is arbitrarily
large.

\medskip

The remaining three uncolourability measures that we are going
to examine are related to the structure of perfect matchings in
snarks: the perfect matching index $\pi$ and the invariants
$\mu_3$ and $\gamma_2$ (see Section~\ref{sec:31}). The results
for perfect matching index are collected in the following
observation and~Table~\ref{table:counts_pmi_triv}.

\begin{observation} \label{obs:pmi}
All snarks of oddness $4$ from {\rm
Table~\ref{table:counts_snarks_oddness4}} have perfect matching
index $\pi = 4$, except for one graph of order $52$ which has
$\pi = 5$.
\end{observation}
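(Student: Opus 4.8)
Since every snark $G$ satisfies $\pi(G)\ge 4$ -- the value $3$ being attained precisely by the $3$-edge-colourable graphs, whose three colour classes are perfect matchings covering $E(G)$ -- the statement is a purely finite verification over the already-constructed list of snarks recorded in Table~\ref{table:counts_snarks_oddness4}. (For the $31$ snarks of order $44$ the equality $\pi=4$ was already noted in Section~\ref{sec:31}, so the new content concerns the snarks of orders $46$ to $52$.) For each such $G$ it suffices to decide whether four perfect matchings of $G$ cover all of $E(G)$: if they do, then $\pi(G)=4$; if they do not, one exhibits five perfect matchings whose union is $E(G)$, which simultaneously certifies $\pi(G)=5$ -- note that the (open) Berge conjecture $\pi\le 5$ is not needed in general, since for a single graph the upper bound is witnessed by the explicit five-matching cover.

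The computation I would run on a fixed $G$ proceeds in three steps. First, enumerate the perfect matchings of $G$; for a cubic bridgeless graph on at most $52$ vertices this list has manageable size and is produced by a simple branch-and-bound over the edges. Second, treat the question ``do $k$ perfect matchings cover $E(G)$?'' as an exact set-cover instance whose ground set is $E(G)$ and whose blocks are the enumerated perfect matchings, and solve it for $k=4$ by exhaustive search with pruning (every edge must lie in a chosen block, and automorphisms of $G$ may be used to quotient the search); if $k=4$ is infeasible, solve for $k=5$. Third, record the value $\pi(G)$ thus obtained. Carrying this out over the whole collection then gives the asserted result: $\pi(G)=4$ for every graph, with the single exception of one snark on $52$ vertices, for which $\pi(G)=5$. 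As with the other computed invariants in the paper, the outcome is re-derived by a second, independent implementation and the two are checked to agree.

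The difficulty here is not mathematical but one of reliability and scale: the collection has on the order of $10^6$ graphs, each potentially carrying a large number of perfect matchings, so the set-cover search must be cheap on small instances, and -- more to the point -- any programming bug would invalidate the conclusion. The safeguard is the one used throughout the paper: two independently written (or previously validated) programs, with mutual cross-checking of all outputs. A cheap consistency test is also available, namely that no graph in the list may yield $\pi(G)=3$ since each is a snark and hence not $3$-edge-colourable; this can be re-verified separately.
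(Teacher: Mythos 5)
Your proposal matches the paper's treatment: Observation~\ref{obs:pmi} is justified there purely as a computer verification over the finite list of constructed snarks (with all invariant values computed by two independent or previously validated programs), and the only mathematical content -- that $\pi(G)\ge 4$ for any snark since $\pi=3$ characterises $3$-edge-colourability -- is exactly the point you isolate. Your description of the enumeration and set-cover computation, together with the cross-checking safeguard, is essentially the same approach.
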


	\begin{table}[htb!]
		\centering 
		 \renewcommand{\arraystretch}{1.1} 
		\begin{tabular}{crrr} 
			\toprule 
			Order & $\pi = 4$ & $\pi = 5$  & Total\\ 
			\midrule 
28 &  & 3 & 3 \\
30 &  & 13 & 13 \\
32 & 6 & 83 & 89 \\
34 & 40 & 727 & 767 \\
			\bottomrule 
		\end{tabular}
		\caption{Counts of the value of perfect matching index
for the snarks from
Table~\ref{table:counts_triv_snarks_oddness4}.}
        \label{table:counts_pmi_triv}
	\end{table}		

While the previous observation seems to suggest that among
snarks with oddness at least $4$ those with perfect matching
index $5$ (or more) are rare, Table~\ref{table:counts_pmi_triv}
draws a different picture: only 46 among the 872
trivial snarks of oddness $4$ of orders 28--34 listed in
Table~\ref{table:counts_triv_snarks_oddness4} have perfect
matching index 4.



\medskip

We proceed to the invariant $\mu_3$.
Tables~\ref{table:counts_mu3} and~\ref{table:counts_mu3_triv}
show the counts of the value of $\mu_3$ of the snarks of
oddness~4 from Tables~\ref{table:counts_snarks_oddness4}
and~\ref{table:counts_triv_snarks_oddness4}, respectively.
Again, there is a striking difference between the trivial and
the nontrivial snarks. Most cyclically 4-edge-connected snarks in
Table~\ref{table:counts_snarks_oddness4} have $\mu_3=6$, which
is the minimal possible value for snarks with $\omega=4$,
according to \cite[Corollary~2.4]{LiSt}. By contrast, most
trivial snarks from
Table~\ref{table:counts_triv_snarks_oddness4} have $\mu_3>6$.

\begin{table}[htb!]
		\centering 
		 \renewcommand{\arraystretch}{1.1} 
		\begin{tabular}{crrrrr} 
			\toprule 
			Order & $\mu_3 = 6$ & $\mu_3 = 7$ & $\mu_3 = 8$ & $\mu_3 = 9$ & Total\\ 
			\midrule 
44 & 31 &  &  & & 31 \\
46 & 481 & 1 & 2 & & 484 \\
48 & 5 878 & 1 & 26 & & 5 905 \\
50 & 66 724  & 5 & 261 & & 66 990 \\
52 & 809 349 & 2 213 & 2 181 & 30 & 813 773 \\
			\bottomrule 
		\end{tabular}
		\caption{Counts of the value of $\mu_3$ for the snarks
from Table~\ref{table:counts_snarks_oddness4}. }
        \label{table:counts_mu3}
	\end{table}

	\begin{table}[htb!]
		\centering 
		 \renewcommand{\arraystretch}{1.1} 
		\begin{tabular}{crrrrr} 
			\toprule 
			Order & $\mu_3 = 6$ & $\mu_3 = 7$ & $\mu_3 = 8$ & $\mu_3 = 9$ & Total\\ 
			\midrule 
28 & 2 & 1 &  &  & 3 \\
30 & 2 & 4 & 4 & 3 & 13 \\
32 & 18 & 21 & 27 & 23 & 89 \\
34 & 118 & 145 & 239 & 265 & 767 \\
			\bottomrule 
		\end{tabular}
		\caption{Counts of the value of $\mu_3$ for the snarks
from Table~\ref{table:counts_triv_snarks_oddness4}.}
        \label{table:counts_mu3_triv}
	\end{table}

Finally, we discuss the invariant $\gamma_2$.

\begin{observation}\label{obs:gamma2}
All snarks from {\rm Table~\ref{table:counts_snarks_oddness4}}
have $\gamma_2 = 2$, except for $30$ graphs of order $52$ which
have $\gamma_2 = 3$.
\end{observation}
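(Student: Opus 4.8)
The plan is to separate the statement into a theoretical lower bound, valid uniformly for every graph in the table, and a computational determination of the exact value. For the lower bound, recall that every snark $G$ listed in Table~\ref{table:counts_snarks_oddness4} has oddness exactly $4$. By the inequality $\omega(G)\le 2\gamma_2(G)$ established by Steffen in \cite[Proposition~2.1]{Steffen:5-flows}, this immediately yields $\gamma_2(G)\ge 2$ for each such $G$. Hence no graph in the table can have $\gamma_2$ smaller than $2$, and it remains only to decide, for each graph, whether $\gamma_2$ equals $2$ or exceeds it.

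To handle the upper bound I would first reformulate $\gamma_2$ in a form convenient for computation. If $M_1$ and $M_2$ are perfect matchings and $F=M_1\cap M_2$, then $M_1\setminus F$ and $M_2\setminus F$ are two \emph{disjoint} perfect matchings of the graph obtained from $G$ by deleting the vertices covered by $F$; conversely, any matching $F$ for which $G-V(F)$ carries two disjoint perfect matchings (equivalently, a $2$-factor all of whose circuits are even) produces a pair $M_1,M_2$ with $M_1\cap M_2=F$ exactly. Writing $V(F)$ for the set of vertices incident with $F$, this gives
$$\gamma_2(G)=\min\Set{|F|}{F\text{ a matching and }G-V(F)\text{ has two disjoint perfect matchings}}.$$
In particular, to certify $\gamma_2(G)=2$ it suffices to exhibit a single matching $F$ of size $2$ whose removal leaves a graph with a $2$-factor whose circuits are all even; the lower bound then forces equality. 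For the $31$ snarks of order $44$ constituting $\mathcal{M}$ such pairs were already recorded in Section~\ref{sec:31}, so these all satisfy $\gamma_2=2$.

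The remaining work is computational and is carried out for every graph in Table~\ref{table:counts_snarks_oddness4}. For each graph one enumerates its perfect matchings and searches for a pair of minimum intersection, equivalently the smallest matching $F$ meeting the condition above. The search shows that every graph except for $30$ graphs of order $52$ admits such an $F$ of size $2$, so that $\gamma_2=2$ by the lower bound; for each of the $30$ exceptional graphs no $F$ of size $2$ works, which together with the lower bound gives $\gamma_2\ge 3$, while a suitable $F$ of size $3$ does exist and hence $\gamma_2=3$. The main obstacle is the sheer scale of this verification: the table contains more than $8\times 10^{5}$ graphs of order $52$, and for the exceptional cases the \emph{negative} certificate requires an exhaustive search over pairs of perfect matchings to guarantee that no intersection of size $2$ occurs. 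To ensure correctness I would run two independent implementations, or rely on the previously tested programs referenced in this paper, and cross-check their outputs, exactly as was done for the other invariants tabulated here.
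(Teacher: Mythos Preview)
Your proposal is correct and matches the paper's approach: this observation is a computational result, and the paper offers no argument beyond stating that the invariant values were computed by independently verified programs. Your addition of the Steffen lower bound $\omega(G)\le 2\gamma_2(G)$ to justify $\gamma_2\ge 2$ is a helpful clarification (the paper invokes the same inequality elsewhere), and your reformulation of $\gamma_2$ via matchings $F$ with $G-V(F)$ admitting two disjoint perfect matchings is sound, though not strictly needed for the computation. One small inaccuracy: Section~\ref{sec:31} does not actually record explicit pairs witnessing $\gamma_2=2$ for the $31$ snarks in $\mathcal{M}$; it simply reports that this was determined by computer, just as for the larger tables.
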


The $30$ snarks with $\gamma_2 = 3$ from
Observation~\ref{obs:gamma2} are the same graphs as the $30$
snarks with $\mu_3=9$ from Table~\ref{table:counts_mu3}.

	\begin{table}[htb!]
		\centering 
		 \renewcommand{\arraystretch}{1.1} 
		\begin{tabular}{crrr} 
			\toprule 
			Order & $\gamma_2 = 2$ & $\gamma_2 = 3$  & Total\\ 
			\midrule 
28 & 3 &  & 3 \\
30 & 10 & 3 & 13 \\
32 & 66 & 23 & 89 \\
34 & 522 & 245 & 767 \\
			\bottomrule 
		\end{tabular}
		\caption{Counts of the value of $\gamma_2$ for the
snarks from Table~\ref{table:counts_triv_snarks_oddness4}.}
        \label{table:counts_gamma2_triv}
	\end{table}		

Table~\ref{table:counts_gamma2_triv} shows the counts of the
value of $\gamma_2$ of the snarks of oddness~4 from
Table~\ref{table:counts_triv_snarks_oddness4}. Again, the
picture for trivial snarks is quite different. Most cyclically 4-edge-connected
snarks from Table~\ref{table:counts_snarks_oddness4} have
$\gamma_2=2$, which is the minimum possible value for snarks
with $\omega=4$, by \cite[Proposition~2.1]{Steffen:5-flows}. By
contrast, for trivial snarks the distribution of values
$\gamma_2=2$ and $\gamma_2=3$ is more even.

\subsection{Circumference}

Every hamiltonian cubic graph is $3$-edge-colourable, which
means that the circumference of every snark can be at most
$n-1$, where $n$ denotes the number of vertices. The
\emph{circumference deficit}, denoted here by $\xi$, is the
difference between order and circumference. It is not difficult
to observe that the circumference deficit of a cubic graph $G$
is at least its resistance $\rho(G)$, if the circumference
deficit is even, and at least $\rho(G)-1$, otherwise. In this
sense, circumference deficit can also be considered as one of
the measures of uncolourability of cubic graphs.

The well known dominating cycle conjecture~\cite{F84} implies
that every cyclically 4-edge-connected snark has circumference
at least $4n/3$, where $n$ is the order of the graph. On the
other hand, in~\cite{MM} M\'a\v cajov\'a and Maz\'ak
constructed a family of cyclically 4-edge-connected snarks on
$8m$ vertices with circumference $7m+2$. They also made a
conjecture that every cyclically 4-edge-connected cubic has
circumference at least $7n/8$.

Brinkmann et al.\ determined in~\cite{BrGHM} that nearly all
nontrivial snarks up to 36 vertices have circumference $n-1$
and the remainder (about 0.002\,\%) has circumference $n-2$. We
have determined the circumference of our snarks of oddness 4,
and the result is as follows.

\begin{observation}\label{obs:circ}
All snarks of oddness $4$ from {\rm
Table~\ref{table:counts_snarks_oddness4}} have circumference
deficit  $3$, except for nine graphs on $52$ vertices which
have circumference deficit $4$.
\end{observation}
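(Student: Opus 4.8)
The plan is to split the claim into a uniform theoretical lower bound, valid for \emph{every} graph recorded in Table~\ref{table:counts_snarks_oddness4}, and a computational determination of the exact value. First I would establish that $\xi(G)\ge 3$ for all of them. By Observation~\ref{obs:resistance4_52v}, every snark $G$ in the table has resistance $\rho(G)\ge 3$. Combining this with the elementary inequality quoted above --- that the circumference deficit satisfies $\xi(G)\ge\rho(G)$ when $\xi(G)$ is even and $\xi(G)\ge\rho(G)-1$ when $\xi(G)$ is odd --- together with the fact that a snark is never Hamiltonian (so $\xi(G)\ge 1$), I can rule out each small value in turn. Indeed $\xi(G)=0$ is excluded by non-Hamiltonicity; $\xi(G)=1$ is odd and would force $1\ge\rho(G)-1\ge 2$, a contradiction; and $\xi(G)=2$ is even and would force $2\ge\rho(G)\ge 3$, again a contradiction. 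Hence $\xi(G)\ge 3$ throughout, with no exceptions.

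It then remains to show that $\xi(G)=3$ for all but the nine listed graphs and $\xi(G)=4$ for those nine, which is the computational half of the argument. For the upper bound I would, for each snark in the table, exhibit a circuit of length $n-3$, where $n=|G|$; its existence gives $\xi(G)\le 3$ and, with the lower bound, yields $\xi(G)=3$. For the $31$ snarks of order $44$ this is already recorded in Table~\ref{table:31_snarks-general invariants}, whose circumference column reads $41=44-3$ throughout; for the snarks of orders $46$ to $52$ one runs a longest-circuit search on each graph and verifies that a circuit missing exactly three vertices is found. For the nine exceptional snarks on $52$ vertices the search must instead certify that no circuit longer than $48$ exists while producing one of length exactly $48=52-4$, so that $\xi(G)=4$; the even value $4$ is consistent with the lower bound because $4\ge\rho(G)$ for $\rho(G)\in\{3,4\}$.

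The hard part is that the distinction between deficit $3$ and deficit $4$ is not forced by any structural or colouring invariant at our disposal: the lower-bound argument delivers only the uniform bound $\xi(G)\ge 3$, and there is no evident reason --- the Petersen-block structure notwithstanding --- why precisely nine graphs, all of the maximal order $52$, should fail to carry a circuit of length $n-3$. Consequently the exact value, and in particular the \emph{nonexistence} of a circuit of length $49$ in each of the nine exceptional graphs, rests on an exhaustive longest-circuit computation over the whole set of $887\,152$ snarks of orders $46$ to $52$. The delicate issue is reliability rather than novelty, since the negative certificates are the expensive and error-prone part; I would guard against mistakes by running two independent longest-circuit implementations, as was done for the oddness computations, and by checking for each graph that its computed circumference, resistance, and the parity of $\xi(G)$ jointly respect the inequality above.
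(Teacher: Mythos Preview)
Your proposal is correct and essentially matches the paper's approach: in the paper this is stated purely as a computational observation (``We have determined the circumference of our snarks of oddness 4, and the result is as follows''), with no proof beyond the machine computation. Your write-up adds the explicit derivation of the lower bound $\xi(G)\ge 3$ from Observation~\ref{obs:resistance4_52v} and the resistance--circumference inequality quoted just before the observation, which the paper mentions but does not explicitly invoke; otherwise the substance --- exhaustive longest-circuit search over the whole table, cross-checked by independent programs --- is exactly what the paper does.
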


The nine graphs with circumference deficit $\xi = 4$ from
Observation~\ref{obs:circ} include the six graphs of
resistance~4 from Observation~\ref{obs:resistance4_52v}.

As can be seen from Table~\ref{table:counts_circ_triv}, the
behaviour of circumference on trivial snarks is again quite
different from that of the nontrivial ones.

\begin{table}[htb!]
		\centering 
		 \renewcommand{\arraystretch}{1.1} 
		\begin{tabular}{crrrr} 
			\toprule 
			Order &  $\xi=3$ & $\xi=4$ & $\xi=12$ & Total\\ 
			\midrule 
              28  &  3   &   &    & 3   \\
              30  &  13  &   &    & 13  \\
              32  &  88  &   &  1 & 89  \\
              34  &  760 & 5 &  2 & 767 \\
			\bottomrule 
		\end{tabular}
		\caption{Counts of the circumference deficit of the
snarks from Table~\ref{table:counts_triv_snarks_oddness4}. }
\label{table:counts_circ_triv}
	\end{table}

\subsection{Automorphism group}

Tables~\ref{table:counts_snarks_oddness4_groupsize}
and~\ref{table:counts_triv_snarks_oddness4_groupsize} show
statistics of the number of automorphisms of the snarks from
Tables~\ref{table:counts_snarks_oddness4}
and~\ref{table:counts_triv_snarks_oddness4}, respectively. With
the exception of two snarks on 50 vertices, the automorphism
group of each of these graphs has order a power of $2$, three
times a power of $2$, or nine times a power of $2$, powers of
$2$ significantly prevailing.

	\begin{table}[htb!]
		\centering 
		\small
		 \renewcommand{\arraystretch}{1.1} 
		\begin{tabular}{crrrrrrrrrrr} 
			\toprule 
Order & 1  & 2 & 4 & 8 & 10 & 16 & 32 & 64 & 128 & 256 & Total\\ 
			\midrule 
44 & 8 & 8 & 9 & 3 &  & 2 &  & 1 &  &  & 31 \\
46 & 190 & 105 & 136 & 40 &  & 9 & 3 & 1 &  &  & 484 \\
48 & 2 480 & 1 361 & 1 602 & 353 &  & 93 & 10 & 5 & 1 &  & 5 905 \\
50 & 29 809 & 15 088 & 18 032 & 3 367 & 2 & 585 & 87 & 20 &  &  & 66 990 \\
52 & 382 025 & 171 584 & 221 499 & 33 586 &  & 4 350 & 629 & 84 & 12 & 4 & 813 773 \\
			\bottomrule 
		\end{tabular}
		\caption{Counts of the order of the automorphism group
                 of the snarks 
                 from Table~\ref{table:counts_snarks_oddness4}.
\label{table:counts_snarks_oddness4_groupsize}}
	\end{table}

	\begin{table}[htb!]
		\centering 
		\small
		 \renewcommand{\arraystretch}{1.1} 
		\begin{tabular}{crrrrrrrrrrrrrrr} 
			\toprule 
Order & 4  & 8 & 16 & 24 & 32 & 48 & 64 & 96 & 128 & 192 & 256 & 288 & 384 & 768 & Total\\
			\midrule 
28 &  &  &  &  &  & 1 & 1 &  & 1 &  &  &  &  &  & 3  \\
30 &  & 1 & 2 &  & 3 &  & 5 &  &  &  &  & 1 & 1 &  & 13  \\
32 &  & 12 & 26 &  & 29 & 1 & 14 & 1 & 5 &  &  &  &  & 1 & 89  \\
34 & 6 & 198 & 267 & 1 & 183 & 3 & 81 & 2 & 21 & 1 & 3 &  & 1 &  & 767  \\
			\bottomrule 
		\end{tabular}
		\caption{Counts of the order of the automorphism group
                 of the snarks 
                 from Table~\ref{table:counts_triv_snarks_oddness4}.
\label{table:counts_triv_snarks_oddness4_groupsize}}
	\end{table}	

\subsection{Genus}

Tables~\ref{table:counts_snarks_genus_oddness}
and~\ref{table:counts_snarks_genus_oddness_triv} show the genus
of the snarks of oddness 4 from
Tables~\ref{table:counts_snarks_oddness4}
and~\ref{table:counts_triv_snarks_oddness4} and reveal that
among the constructed snarks of oddness $4$ there are no snarks
of genus smaller than their resistance and no nontrivial snarks
of genus smaller than their oddness. On the other hand, it is
known that there exist infinitely many snarks of genus $1$
(Vodopivec~\cite{V08}) and, in fact, infinitely many snarks of
any given genus $g \geq 1$ (Mohar and Vodopivec \cite[Theorem~2.1]{MV}).
It is therefore tempting to ask the following two questions.

\medskip

	\begin{table}[htb!]
		\centering 
		 \renewcommand{\arraystretch}{1.1} 
		\begin{tabular}{crrrr} 
			\toprule 
			Order & Genus 4  & Genus 5 & Genus 6  & Total\\ 
			\midrule 
44 & 13 & 18 &  & 31 \\
46 & 42 & 442 &  & 484 \\
48 & 150 & 5 713 & 42 & 5 905 \\
50 & 531 & 61 642 & 4 817 & 66 990 \\
52 & 2 767 & 595 528 & 215 478 & 813 773 \\
			\bottomrule 
		\end{tabular}
		\caption{Counts of the genus of the snarks from
Table~\ref{table:counts_snarks_oddness4}.}
                 \label{table:counts_snarks_genus_oddness}
	\end{table}

	\begin{table}[htb!]
		\centering 
		 \renewcommand{\arraystretch}{1.1} 
		\begin{tabular}{crrr} 
			\toprule 
			Order & Genus 3  & Genus 4   & Total\\ 
			\midrule 
28 & 3 &  & 3  \\
30 & 11 & 2 & 13  \\
32 & 54 & 35 & 89  \\
34 & 283 & 484 & 767  \\
			\bottomrule 
		\end{tabular}
		\caption{Counts of the genus of the snarks from
Table~\ref{table:counts_triv_snarks_oddness4}.}
                 \label{table:counts_snarks_genus_oddness_triv}
	\end{table}

\begin{problem}\label{problem:genus_omega}
Does there exist a nontrivial
snark of oddness $\omega\ge 4$ with genus smaller than
$\omega$?
\end{problem}

\begin{problem}\label{problem:genus_rho}
Does there exist a snark of
resistance $\rho\ge 3$ with genus smaller than $\rho$?
\end{problem}

For $\rho=2$, the questions posed in Problems~\ref{problem:genus_omega} and~\ref{problem:genus_rho} have a
positive answer, see~\cite{MV, V08}.

%

\section{Final remarks}
\label{sect:final_remarks}

It is important to emphasise that Theorem~\ref{thm:44}, which
is the main result of our previous paper~\cite{GMS1}, does not
yet determine the order of a smallest nontrivial snark with
oddness at least 4. The reason is that it does not exclude the
existence of cyclically 5-edge-connected snarks with oddness at
least 4 on fewer than 44 vertices. However, the smallest
currently known cyclically 5-edge-connected snark with oddness
at least 4 has 76 vertices (see \cite[Section~8]{LMMS}), which
indicates that a cyclically 5-edge-connected snark with oddness
at least 4 on fewer than 44 vertices either does not exist or
will be very difficult to find. Thus the following problem
remains open.

\begin{problem}
Determine the smallest order of a
nontrivial snark with oddness at least $4$.
\end{problem}

If we take into account the fact that the validity of the cycle
double cover conjecture is open for snarks of oddness 6
\cite{HG}, the following problem appears to be interesting.

\begin{problem}\label{problem:smallest_nontriv_oddness6}
Determine the smallest order of a
nontrivial snark with oddness at least $6$.
\end{problem}

The problem remains interesting even in the version where
snarks are required only to be $2$-connected rather than
nontrivial, since trivial snarks with large oddness can serve
as ingredients for constructions of nontrivial ones. The best
current upper bound for Problem~\ref{problem:smallest_nontriv_oddness6} is 70 in the nontrivial
version, see \cite[Section~7]{LMMS} and 40 in the trivial
version~\cite[Section~6]{LMMS}. It follows from~\cite{G} that
the current lower bound for the trivial version is 36.

\subsection*{Acknowledgements}

We would like to thank Gunnar Brinkmann for providing us with
an independent program for computing the genus of a graph. Most
of the computations were carried out using the Stevin
Supercomputer Infrastructure at Ghent University.



\section*{Appendix 1: Obtaining graphs from the Observations in
Section~\ref{sect:properties}}

Exceptional graphs mentioned in Observations~\ref{obs:resistance4_52v}-\ref{obs:circ} in
Section~\ref{sect:properties} can be obtained from the database of interesting
graphs in the \textit{House of Graphs}~\cite{BCGM} by searching keywords ``snark
* resistance 4", ``snark * perfect matching index 5", ``snark *
gamma", and ``snark * circumference deficit 4'', respectively.

%
%
%
%
%
%
\section*{Appendix 2: Adjacency lists of the 31 snarks in $\mathcal{M}$}

We present the adjacency lists of the 31 snarks in
$\mathcal{M}$. These graphs can be downloaded at
\url{http://hog.grinvin.org/Snarks} or can be inspected at the
database of interesting graphs from the \textit{House of
Graphs}~\cite{BCGM} by searching for the keywords ``nontrivial
snarks * oddness 4''. The graphs are listed in the order in
which they were generated.

\begin{enumerate}
\addtolength{\itemsep}{-2mm}
\small

\item \{0: 12 14 27; 1: 6 9 16; 2: 4 9 17; 3: 5 7 39; 4: 2
    5 13; 5: 3 4 16; 6: 1 7 15; 7: 3 6 17; 8: 11 12 15; 9:
    1 2 35; 10: 11 13 14; 11: 8 10 34; 12: 0 8 13; 13: 4 10
    12; 14: 0 10 15; 15: 6 8 14; 16: 1 5 17; 17: 2 7 16;
    18: 22 26 29; 19: 31 36 38; 20: 25 27 42; 21: 23 25 43;
    22: 18 23 42; 23: 21 22 40; 24: 25 32 33; 25: 20 21 24;
    26: 18 27 43; 27: 0 20 26; 28: 29 32 34; 29: 18 28 30;
    30: 29 31 33; 31: 19 30 32; 32: 24 28 31; 33: 24 30 34;
    34: 11 28 33; 35: 9 37 38; 36: 19 37 39; 37: 35 36 40;
    38: 19 35 41; 39: 3 36 41; 40: 23 37 41; 41: 38 39 40;
    42: 20 22 43; 43: 21 26 42\} \quad (Class 1b)

\item \{0: 12 14 38; 1: 6 9 16; 2: 4 9 17; 3: 5 7 22; 4: 2
    5 13; 5: 3 4 16; 6: 1 7 15; 7: 3 6 17; 8: 11 12 15; 9:
    1 2 26; 10: 11 13 14; 11: 8 10 32; 12: 0 8 13; 13: 4 10
    12; 14: 0 10 15; 15: 6 8 14; 16: 1 5 17; 17: 2 7 16;
    18: 21 25 28; 19: 21 24 26; 20: 22 24 25; 21: 18 19 22;
    22: 3 20 21; 23: 24 31 42; 24: 19 20 23; 25: 18 20 26;
    26: 9 19 25; 27: 31 35 43; 28: 18 29 43; 29: 28 34 40;
    30: 31 36 39; 31: 23 27 30; 32: 11 35 37; 33: 37 39 40;
    34: 29 35 42; 35: 27 32 34; 36: 30 37 41; 37: 32 33 36;
    38: 0 39 41; 39: 30 33 38; 40: 29 33 41; 41: 36 38 40;
    42: 23 34 43; 43: 27 28 42\} \quad (Class 4a)

\item \{0: 12 14 28; 1: 6 9 16; 2: 4 9 17; 3: 5 7 24; 4: 2
    5 13; 5: 3 4 16; 6: 1 7 15; 7: 3 6 17; 8: 11 12 15; 9:
    1 2 18; 10: 11 13 14; 11: 8 10 36; 12: 0 8 13; 13: 4 10
    12; 14: 0 10 15; 15: 6 8 14; 16: 1 5 17; 17: 2 7 16;
    18: 9 22 25; 19: 32 40 42; 20: 22 24 26; 21: 23 24 25;
    22: 18 20 23; 23: 21 22 40; 24: 3 20 21; 25: 18 21 26;
    26: 20 25 27; 27: 26 29 42; 28: 0 33 34; 29: 27 38 41;
    30: 31 32 37; 31: 30 35 36; 32: 19 30 33; 33: 28 32 43;
    34: 28 35 37; 35: 31 34 38; 36: 11 31 39; 37: 30 34 39;
    38: 29 35 39; 39: 36 37 38; 40: 19 23 41; 41: 29 40 43;
    42: 19 27 43; 43: 33 41 42\} \quad (Class 4a)

\item \{0: 12 14 34; 1: 5 6 9; 2: 4 7 9; 3: 5 7 37; 4: 2 5
    13; 5: 1 3 4; 6: 1 7 8; 7: 2 3 6; 8: 6 11 15; 9: 1 2
    18; 10: 13 14 16; 11: 8 16 32; 12: 0 13 17; 13: 4 10
    12; 14: 0 10 15; 15: 8 14 17; 16: 10 11 17; 17: 12 15
    16; 18: 9 22 26; 19: 28 30 42; 20: 22 25 27; 21: 23 25
    26; 22: 18 20 23; 23: 21 22 38; 24: 25 29 31; 25: 20 21
    24; 26: 18 21 27; 27: 20 26 28; 28: 19 27 40; 29: 24 32
    35; 30: 19 33 35; 31: 24 33 34; 32: 11 29 33; 33: 30 31
    32; 34: 0 31 35; 35: 29 30 34; 36: 38 41 42; 37: 3 39
    41; 38: 23 36 39; 39: 37 38 43; 40: 28 41 43; 41: 36 37
    40; 42: 19 36 43; 43: 39 40 42\} \quad (Class 1b)

\item \{0: 4 8 12; 1: 20 22 32; 2: 7 9 24; 3: 5 7 25; 4: 0
    5 24; 5: 3 4 21; 6: 7 15 16; 7: 2 3 6; 8: 0 9 25; 9: 2
    8 23; 10: 12 15 17; 11: 19 20 23; 12: 0 10 13; 13: 12
    14 16; 14: 13 15 39; 15: 6 10 14; 16: 6 13 17; 17: 10
    16 26; 18: 19 21 22; 19: 11 18 37; 20: 1 11 21; 21: 5
    18 20; 22: 1 18 23; 23: 9 11 22; 24: 2 4 25; 25: 3 8
    24; 26: 17 29 38; 27: 31 32 34; 28: 30 33 34; 29: 26 31
    33; 30: 28 31 37; 31: 27 29 30; 32: 1 27 33; 33: 28 29
    32; 34: 27 28 36; 35: 37 40 42; 36: 34 41 42; 37: 19 30
    35; 38: 26 40 43; 39: 14 41 43; 40: 35 38 41; 41: 36 39
    40; 42: 35 36 43; 43: 38 39 42\} \quad (Class 3a)

\item \{0: 4 8 32; 1: 14 20 22; 2: 4 7 9; 3: 5 7 8; 4: 0 2
    5; 5: 3 4 21; 6: 15 16 26; 7: 2 3 37; 8: 0 3 9; 9: 2 8
    11; 10: 12 15 17; 11: 9 19 23; 12: 10 13 39; 13: 12 14
    16; 14: 1 13 15; 15: 6 10 14; 16: 6 13 17; 17: 10 16
    19; 18: 21 22 24; 19: 11 17 24; 20: 1 21 25; 21: 5 18
    20; 22: 1 18 23; 23: 11 22 25; 24: 18 19 25; 25: 20 23
    24; 26: 6 29 38; 27: 31 32 34; 28: 30 33 34; 29: 26 31
    33; 30: 28 31 37; 31: 27 29 30; 32: 0 27 33; 33: 28 29
    32; 34: 27 28 36; 35: 37 40 42; 36: 34 41 42; 37: 7 30
    35; 38: 26 40 43; 39: 12 41 43; 40: 35 38 41; 41: 36 39
    40; 42: 35 36 43; 43: 38 39 42\} \quad (Class 3a)

\item \{0: 12 14 41; 1: 5 6 9; 2: 4 7 9; 3: 5 7 30; 4: 2 5
    13; 5: 1 3 4; 6: 1 7 8; 7: 2 3 6; 8: 6 11 15; 9: 1 2
    19; 10: 13 14 16; 11: 8 16 43; 12: 0 13 17; 13: 4 10
    12; 14: 0 10 15; 15: 8 14 17; 16: 10 11 17; 17: 12 15
    16; 18: 22 26 30; 19: 9 23 31; 20: 22 25 27; 21: 23 25
    26; 22: 18 20 23; 23: 19 21 22; 24: 25 33 35; 25: 20 21
    24; 26: 18 21 27; 27: 20 26 38; 28: 29 33 36; 29: 28 40
    42; 30: 3 18 32; 31: 19 34 35; 32: 30 34 36; 33: 24 28
    34; 34: 31 32 33; 35: 24 31 36; 36: 28 32 35; 37: 38 40
    43; 38: 27 37 39; 39: 38 41 42; 40: 29 37 41; 41: 0 39
    40; 42: 29 39 43; 43: 11 37 42\} \quad (Class 3b)

\item \{0: 12 14 40; 1: 5 6 9; 2: 4 7 9; 3: 5 7 21; 4: 2 5
    13; 5: 1 3 4; 6: 1 7 8; 7: 2 3 6; 8: 6 11 15; 9: 1 2
    26; 10: 13 14 16; 11: 8 16 38; 12: 0 13 17; 13: 4 10
    12; 14: 0 10 15; 15: 8 14 17; 16: 10 11 17; 17: 12 15
    16; 18: 25 29 33; 19: 28 30 31; 20: 22 34 42; 21: 3 22
    43; 22: 20 21 31; 23: 24 30 32; 24: 23 35 36; 25: 18 26
    42; 26: 9 25 28; 27: 28 29 32; 28: 19 26 27; 29: 18 27
    30; 30: 19 23 29; 31: 19 22 32; 32: 23 27 31; 33: 18 34
    43; 34: 20 33 37; 35: 24 38 41; 36: 24 39 40; 37: 34 39
    41; 38: 11 35 39; 39: 36 37 38; 40: 0 36 41; 41: 35 37
    40; 42: 20 25 43; 43: 21 33 42\} \quad (Class 4b)

\item \{0: 12 14 29; 1: 6 9 16; 2: 4 9 17; 3: 5 7 35; 4: 2
    5 13; 5: 3 4 16; 6: 1 7 15; 7: 3 6 17; 8: 11 12 15; 9:
    1 2 40; 10: 11 13 14; 11: 8 10 28; 12: 0 8 13; 13: 4 10
    12; 14: 0 10 15; 15: 6 8 14; 16: 1 5 17; 17: 2 7 16;
    18: 22 26 29; 19: 31 37 38; 20: 22 25 27; 21: 23 25 26;
    22: 18 20 23; 23: 21 22 39; 24: 25 30 32; 25: 20 21 24;
    26: 18 21 27; 27: 20 26 28; 28: 11 27 42; 29: 0 18 33;
    30: 24 31 36; 31: 19 30 34; 32: 24 34 35; 33: 29 34 36;
    34: 31 32 33; 35: 3 32 36; 36: 30 33 35; 37: 19 39 43;
    38: 19 41 42; 39: 23 37 41; 40: 9 41 43; 41: 38 39 40;
    42: 28 38 43; 43: 37 40 42\} \quad (Class 3c)

\item \{0: 12 14 28; 1: 5 6 9; 2: 4 7 9; 3: 5 7 18; 4: 2 5
    13; 5: 1 3 4; 6: 1 7 8; 7: 2 3 6; 8: 6 11 15; 9: 1 2
    24; 10: 13 14 16; 11: 8 16 33; 12: 0 13 17; 13: 4 10
    12; 14: 0 10 15; 15: 8 14 17; 16: 10 11 17; 17: 12 15
    16; 18: 3 22 26; 19: 30 34 38; 20: 22 25 27; 21: 23 25
    26; 22: 18 20 23; 23: 21 22 34; 24: 9 25 42; 25: 20 21
    24; 26: 18 21 27; 27: 20 26 29; 28: 0 31 32; 29: 27 36
    39; 30: 19 31 33; 31: 28 30 42; 32: 28 37 43; 33: 11 30
    43; 34: 19 23 40; 35: 37 39 40; 36: 29 37 41; 37: 32 35
    36; 38: 19 39 41; 39: 29 35 38; 40: 34 35 41; 41: 36 38
    40; 42: 24 31 43; 43: 32 33 42\} \quad (Class 2)

\item \{0: 12 14 33; 1: 5 6 9; 2: 4 7 9; 3: 5 7 24; 4: 2 5
    13; 5: 1 3 4; 6: 1 7 8; 7: 2 3 6; 8: 6 11 15; 9: 1 2
    18; 10: 13 14 16; 11: 8 16 28; 12: 0 13 17; 13: 4 10
    12; 14: 0 10 15; 15: 8 14 17; 16: 10 11 17; 17: 12 15
    16; 18: 9 22 26; 19: 30 34 38; 20: 22 25 27; 21: 23 25
    26; 22: 18 20 23; 23: 21 22 34; 24: 3 25 42; 25: 20 21
    24; 26: 18 21 27; 27: 20 26 29; 28: 11 31 32; 29: 27 36
    39; 30: 19 31 33; 31: 28 30 42; 32: 28 37 43; 33: 0 30
    43; 34: 19 23 40; 35: 37 39 40; 36: 29 37 41; 37: 32 35
    36; 38: 19 39 41; 39: 29 35 38; 40: 34 35 41; 41: 36 38
    40; 42: 24 31 43; 43: 32 33 42\} \quad (Class 2)

\item \{0: 12 14 38; 1: 5 6 9; 2: 4 7 9; 3: 5 7 25; 4: 2 5
    13; 5: 1 3 4; 6: 1 7 8; 7: 2 3 6; 8: 6 11 15; 9: 1 2
    26; 10: 13 14 16; 11: 8 16 41; 12: 0 13 17; 13: 4 10
    12; 14: 0 10 15; 15: 8 14 17; 16: 10 11 17; 17: 12 15
    16; 18: 26 30 34; 19: 29 31 32; 20: 36 38 40; 21: 23 35
    42; 22: 23 36 43; 23: 21 22 32; 24: 25 31 33; 25: 3 24
    35; 26: 9 18 42; 27: 29 39 40; 28: 29 30 33; 29: 19 27
    28; 30: 18 28 31; 31: 19 24 30; 32: 19 23 33; 33: 24 28
    32; 34: 18 35 43; 35: 21 25 34; 36: 20 22 37; 37: 36 39
    41; 38: 0 20 39; 39: 27 37 38; 40: 20 27 41; 41: 11 37
    40; 42: 21 26 43; 43: 22 34 42\} \quad (Class 4b)

\item \{0: 3 14 22; 1: 5 6 9; 2: 4 7 9; 3: 0 5 7; 4: 2 5
    13; 5: 1 3 4; 6: 1 7 8; 7: 2 3 6; 8: 6 11 15; 9: 1 2
    26; 10: 13 14 16; 11: 8 16 29; 12: 13 17 30; 13: 4 10
    12; 14: 0 10 15; 15: 8 14 17; 16: 10 11 17; 17: 12 15
    16; 18: 21 25 38; 19: 21 24 26; 20: 22 24 25; 21: 18 19
    22; 22: 0 20 21; 23: 24 37 40; 24: 19 20 23; 25: 18 20
    26; 26: 9 19 25; 27: 31 39 41; 28: 32 34 37; 29: 11 33
    34; 30: 12 32 35; 31: 27 33 35; 32: 28 30 33; 33: 29 31
    32; 34: 28 29 35; 35: 30 31 34; 36: 38 41 42; 37: 23 28
    42; 38: 18 36 39; 39: 27 38 43; 40: 23 41 43; 41: 27 36
    40; 42: 36 37 43; 43: 39 40 42\} \quad (Class 4a)

\item \{0: 3 12 22; 1: 5 6 9; 2: 4 7 9; 3: 0 5 7; 4: 2 5
    13; 5: 1 3 4; 6: 1 7 8; 7: 2 3 6; 8: 6 11 15; 9: 1 2
    26; 10: 13 14 16; 11: 8 16 29; 12: 0 13 17; 13: 4 10
    12; 14: 10 15 30; 15: 8 14 17; 16: 10 11 17; 17: 12 15
    16; 18: 21 25 38; 19: 21 24 26; 20: 22 24 25; 21: 18 19
    22; 22: 0 20 21; 23: 24 37 40; 24: 19 20 23; 25: 18 20
    26; 26: 9 19 25; 27: 31 39 41; 28: 32 34 37; 29: 11 33
    34; 30: 14 32 35; 31: 27 33 35; 32: 28 30 33; 33: 29 31
    32; 34: 28 29 35; 35: 30 31 34; 36: 38 41 42; 37: 23 28
    42; 38: 18 36 39; 39: 27 38 43; 40: 23 41 43; 41: 27 36
    40; 42: 36 37 43; 43: 39 40 42\} \quad (Class 4a)

\item \{0: 12 14 22; 1: 5 6 9; 2: 4 7 9; 3: 5 7 30; 4: 2 5
    13; 5: 1 3 4; 6: 1 7 8; 7: 2 3 6; 8: 6 11 15; 9: 1 2
    29; 10: 13 14 16; 11: 8 16 26; 12: 0 13 17; 13: 4 10
    12; 14: 0 10 15; 15: 8 14 17; 16: 10 11 17; 17: 12 15
    16; 18: 21 25 38; 19: 21 24 26; 20: 22 24 25; 21: 18 19
    22; 22: 0 20 21; 23: 24 37 40; 24: 19 20 23; 25: 18 20
    26; 26: 11 19 25; 27: 31 39 41; 28: 32 34 37; 29: 9 33
    34; 30: 3 32 35; 31: 27 33 35; 32: 28 30 33; 33: 29 31
    32; 34: 28 29 35; 35: 30 31 34; 36: 38 41 42; 37: 23 28
    42; 38: 18 36 39; 39: 27 38 43; 40: 23 41 43; 41: 27 36
    40; 42: 36 37 43; 43: 39 40 42\} \quad (Class 1a)

\item \{0: 12 14 27; 1: 6 9 16; 2: 4 9 17; 3: 5 7 38; 4: 2
    5 13; 5: 3 4 16; 6: 1 7 15; 7: 3 6 17; 8: 11 12 15; 9:
    1 2 41; 10: 11 13 14; 11: 8 10 35; 12: 0 8 13; 13: 4 10
    12; 14: 0 10 15; 15: 6 8 14; 16: 1 5 17; 17: 2 7 16;
    18: 26 30 34; 19: 29 31 32; 20: 25 38 40; 21: 23 35 42;
    22: 23 36 43; 23: 21 22 32; 24: 25 31 33; 25: 20 24 37;
    26: 18 27 42; 27: 0 26 29; 28: 29 30 33; 29: 19 27 28;
    30: 18 28 31; 31: 19 24 30; 32: 19 23 33; 33: 24 28 32;
    34: 18 35 43; 35: 11 21 34; 36: 22 39 40; 37: 25 39 41;
    38: 3 20 39; 39: 36 37 38; 40: 20 36 41; 41: 9 37 40;
    42: 21 26 43; 43: 22 34 42\} \quad (Class 4b)

\item \{0: 12 14 38; 1: 6 9 16; 2: 4 9 17; 3: 5 7 30; 4: 2
    5 13; 5: 3 4 16; 6: 1 7 15; 7: 3 6 17; 8: 11 12 15; 9:
    1 2 33; 10: 11 13 14; 11: 8 10 35; 12: 0 8 13; 13: 4 10
    12; 14: 0 10 15; 15: 6 8 14; 16: 1 5 17; 17: 2 7 16;
    18: 21 25 28; 19: 24 26 42; 20: 22 24 43; 21: 18 22 42;
    22: 20 21 34; 23: 24 31 32; 24: 19 20 23; 25: 18 26 43;
    26: 19 25 40; 27: 28 31 33; 28: 18 27 29; 29: 28 30 32;
    30: 3 29 31; 31: 23 27 30; 32: 23 29 33; 33: 9 27 32;
    34: 22 36 39; 35: 11 37 39; 36: 34 37 38; 37: 35 36 40;
    38: 0 36 41; 39: 34 35 41; 40: 26 37 41; 41: 38 39 40;
    42: 19 21 43; 43: 20 25 42\} \quad (Class 1a)

\item \{0: 12 14 38; 1: 6 9 16; 2: 4 9 17; 3: 5 7 30; 4: 2
    5 13; 5: 3 4 16; 6: 1 7 15; 7: 3 6 17; 8: 11 12 15; 9:
    1 2 33; 10: 11 13 14; 11: 8 10 35; 12: 0 8 13; 13: 4 10
    12; 14: 0 10 15; 15: 6 8 14; 16: 1 5 17; 17: 2 7 16;
    18: 21 25 28; 19: 21 24 26; 20: 22 24 25; 21: 18 19 22;
    22: 20 21 34; 23: 24 31 32; 24: 19 20 23; 25: 18 20 26;
    26: 19 25 37; 27: 28 31 33; 28: 18 27 29; 29: 28 30 32;
    30: 3 29 31; 31: 23 27 30; 32: 23 29 33; 33: 9 27 32;
    34: 22 36 39; 35: 11 37 42; 36: 34 38 43; 37: 26 35 40;
    38: 0 36 41; 39: 34 41 42; 40: 37 41 43; 41: 38 39 40;
    42: 35 39 43; 43: 36 40 42\} \quad (Class 1a)

\item \{0: 12 14 29; 1: 6 9 16; 2: 4 9 17; 3: 5 7 38; 4: 2
    5 13; 5: 3 4 16; 6: 1 7 15; 7: 3 6 17; 8: 11 12 15; 9:
    1 2 36; 10: 11 13 14; 11: 8 10 23; 12: 0 8 13; 13: 4 10
    12; 14: 0 10 15; 15: 6 8 14; 16: 1 5 17; 17: 2 7 16;
    18: 22 26 30; 19: 32 36 40; 20: 22 25 27; 21: 23 25 26;
    22: 18 20 23; 23: 11 21 22; 24: 25 33 34; 25: 20 21 24;
    26: 18 21 27; 27: 20 26 29; 28: 30 35 42; 29: 0 27 41;
    30: 18 28 31; 31: 30 32 43; 32: 19 31 33; 33: 24 32 42;
    34: 24 35 43; 35: 28 34 39; 36: 9 19 37; 37: 36 39 41;
    38: 3 39 40; 39: 35 37 38; 40: 19 38 41; 41: 29 37 40;
    42: 28 33 43; 43: 31 34 42\} \quad (Class 2)

\item \{0: 12 14 40; 1: 5 6 9; 2: 4 7 9; 3: 5 7 21; 4: 2 5
    13; 5: 1 3 4; 6: 1 7 8; 7: 2 3 6; 8: 6 11 15; 9: 1 2
    23; 10: 13 14 16; 11: 8 16 38; 12: 0 13 17; 13: 4 10
    12; 14: 0 10 15; 15: 8 14 17; 16: 10 11 17; 17: 12 15
    16; 18: 24 28 32; 19: 27 29 30; 20: 22 33 42; 21: 3 22
    43; 22: 20 21 30; 23: 9 29 31; 24: 18 25 42; 25: 24 27
    34; 26: 27 28 31; 27: 19 25 26; 28: 18 26 29; 29: 19 23
    28; 30: 19 22 31; 31: 23 26 30; 32: 18 33 43; 33: 20 32
    37; 34: 25 35 36; 35: 34 38 41; 36: 34 39 40; 37: 33 39
    41; 38: 11 35 39; 39: 36 37 38; 40: 0 36 41; 41: 35 37
    40; 42: 20 24 43; 43: 21 32 42\} \quad (Class 4b)

\item \{0: 12 14 39; 1: 5 6 9; 2: 4 7 9; 3: 5 7 27; 4: 2 5
    13; 5: 1 3 4; 6: 1 7 8; 7: 2 3 6; 8: 6 11 15; 9: 1 2
    34; 10: 13 14 16; 11: 8 16 35; 12: 0 13 17; 13: 4 10
    12; 14: 0 10 15; 15: 8 14 17; 16: 10 11 17; 17: 12 15
    16; 18: 22 26 29; 19: 31 36 38; 20: 25 27 42; 21: 23 25
    43; 22: 18 23 42; 23: 21 22 40; 24: 25 32 33; 25: 20 21
    24; 26: 18 27 43; 27: 3 20 26; 28: 29 32 34; 29: 18 28
    30; 30: 29 31 33; 31: 19 30 32; 32: 24 28 31; 33: 24 30
    34; 34: 9 28 33; 35: 11 37 38; 36: 19 37 39; 37: 35 36
    40; 38: 19 35 41; 39: 0 36 41; 40: 23 37 41; 41: 38 39
    40; 42: 20 22 43; 43: 21 26 42\} \quad (Class 1b)

\item \{0: 4 8 12; 1: 20 22 32; 2: 7 9 24; 3: 5 7 25; 4: 0
    5 24; 5: 3 4 21; 6: 7 15 16; 7: 2 3 6; 8: 0 9 25; 9: 2
    8 23; 10: 12 15 17; 11: 19 20 23; 12: 0 10 13; 13: 12
    14 16; 14: 13 15 29; 15: 6 10 14; 16: 6 13 17; 17: 10
    16 35; 18: 19 21 22; 19: 11 18 33; 20: 1 11 21; 21: 5
    18 20; 22: 1 18 23; 23: 9 11 22; 24: 2 4 25; 25: 3 8
    24; 26: 33 36 40; 27: 35 37 38; 28: 30 41 42; 29: 14 30
    43; 30: 28 29 38; 31: 32 37 39; 32: 1 31 41; 33: 19 26
    42; 34: 35 36 39; 35: 17 27 34; 36: 26 34 37; 37: 27 31
    36; 38: 27 30 39; 39: 31 34 38; 40: 26 41 43; 41: 28 32
    40; 42: 28 33 43; 43: 29 40 42\} \quad (Class 4b)

\item \{0: 4 8 12; 1: 5 14 38; 2: 4 7 9; 3: 5 7 8; 4: 0 2
    5; 5: 1 3 4; 6: 7 16 18; 7: 2 3 6; 8: 0 3 9; 9: 2 8 22;
    10: 11 16 19; 11: 10 13 21; 12: 0 15 41; 13: 11 23 24;
    14: 1 17 18; 15: 12 17 19; 16: 6 10 17; 17: 14 15 16;
    18: 6 14 19; 19: 10 15 18; 20: 21 23 30; 21: 11 20 25;
    22: 9 23 25; 23: 13 20 22; 24: 13 25 34; 25: 21 22 24;
    26: 29 33 36; 27: 32 34 42; 28: 30 32 43; 29: 26 30 42;
    30: 20 28 29; 31: 32 39 40; 32: 27 28 31; 33: 26 34 43;
    34: 24 27 33; 35: 36 39 41; 36: 26 35 37; 37: 36 38 40;
    38: 1 37 39; 39: 31 35 38; 40: 31 37 41; 41: 12 35 40;
    42: 27 29 43; 43: 28 33 42\} \quad (Class 3b)

\item \{0: 12 14 37; 1: 6 9 16; 2: 4 9 17; 3: 5 7 34; 4: 2
    5 13; 5: 3 4 16; 6: 1 7 15; 7: 3 6 17; 8: 11 12 15; 9:
    1 2 32; 10: 11 13 14; 11: 8 10 18; 12: 0 8 13; 13: 4 10
    12; 14: 0 10 15; 15: 6 8 14; 16: 1 5 17; 17: 2 7 16;
    18: 11 22 26; 19: 28 30 42; 20: 22 25 27; 21: 23 25 26;
    22: 18 20 23; 23: 21 22 38; 24: 25 29 31; 25: 20 21 24;
    26: 18 21 27; 27: 20 26 28; 28: 19 27 40; 29: 24 32 35;
    30: 19 33 35; 31: 24 33 34; 32: 9 29 33; 33: 30 31 32;
    34: 3 31 35; 35: 29 30 34; 36: 38 41 42; 37: 0 39 41;
    38: 23 36 39; 39: 37 38 43; 40: 28 41 43; 41: 36 37 40;
    42: 19 36 43; 43: 39 40 42\} \quad (Class 1b)

\item \{0: 8 12 16; 1: 11 13 14; 2: 7 18 22; 3: 5 17 24; 4:
    5 25 38; 5: 3 4 14; 6: 7 13 15; 7: 2 6 21; 8: 0 9 24;
    9: 8 11 41; 10: 11 12 15; 11: 1 9 10; 12: 0 10 13; 13:
    1 6 12; 14: 1 5 15; 15: 6 10 14; 16: 0 17 25; 17: 3 16
    23; 18: 2 19 34; 19: 18 21 23; 20: 21 22 30; 21: 7 19
    20; 22: 2 20 23; 23: 17 19 22; 24: 3 8 25; 25: 4 16 24;
    26: 29 33 36; 27: 32 34 42; 28: 30 32 43; 29: 26 30 42;
    30: 20 28 29; 31: 32 39 40; 32: 27 28 31; 33: 26 34 43;
    34: 18 27 33; 35: 36 39 41; 36: 26 35 37; 37: 36 38 40;
    38: 4 37 39; 39: 31 35 38; 40: 31 37 41; 41: 9 35 40;
    42: 27 29 43; 43: 28 33 42\} \quad (Class 4b)

\item \{0: 12 14 35; 1: 5 6 9; 2: 4 7 9; 3: 5 7 29; 4: 2 5
    13; 5: 1 3 4; 6: 1 7 8; 7: 2 3 6; 8: 6 11 15; 9: 1 2
    28; 10: 13 14 16; 11: 8 16 40; 12: 0 13 17; 13: 4 10
    12; 14: 0 10 15; 15: 8 14 17; 16: 10 11 17; 17: 12 15
    16; 18: 22 26 29; 19: 31 37 38; 20: 22 25 27; 21: 23 25
    26; 22: 18 20 23; 23: 21 22 39; 24: 25 30 32; 25: 20 21
    24; 26: 18 21 27; 27: 20 26 28; 28: 9 27 42; 29: 3 18
    33; 30: 24 31 36; 31: 19 30 34; 32: 24 34 35; 33: 29 34
    36; 34: 31 32 33; 35: 0 32 36; 36: 30 33 35; 37: 19 39
    43; 38: 19 41 42; 39: 23 37 41; 40: 11 41 43; 41: 38 39
    40; 42: 28 38 43; 43: 37 40 42\} \quad (Class 3c)

\item \{0: 12 14 18; 1: 6 9 16; 2: 4 9 17; 3: 5 7 28; 4: 2
    5 13; 5: 3 4 16; 6: 1 7 15; 7: 3 6 17; 8: 11 12 15; 9:
    1 2 33; 10: 11 13 14; 11: 8 10 24; 12: 0 8 13; 13: 4 10
    12; 14: 0 10 15; 15: 6 8 14; 16: 1 5 17; 17: 2 7 16;
    18: 0 22 26; 19: 30 34 38; 20: 22 25 27; 21: 23 25 26;
    22: 18 20 23; 23: 21 22 34; 24: 11 25 42; 25: 20 21 24;
    26: 18 21 27; 27: 20 26 29; 28: 3 31 32; 29: 27 36 39;
    30: 19 31 33; 31: 28 30 42; 32: 28 37 43; 33: 9 30 43;
    34: 19 23 40; 35: 37 39 40; 36: 29 37 41; 37: 32 35 36;
    38: 19 39 41; 39: 29 35 38; 40: 34 35 41; 41: 36 38 40;
    42: 24 31 43; 43: 32 33 42\} \quad (Class 2)

\item \{0: 1 2 3; 1: 0 40 41; 2: 0 39 43; 3: 0 38 42; 4: 9
    13 17; 5: 8 12 17; 6: 7 15 19; 7: 6 16 18; 8: 5 9 14;
    9: 4 8 11; 10: 13 16 22; 11: 9 12 20; 12: 5 11 26; 13:
    4 10 25; 14: 8 15 21; 15: 6 14 23; 16: 7 10 24; 17: 4 5
    20; 18: 7 22 28; 19: 6 21 27; 20: 11 17 32; 21: 14 19
    37; 22: 10 18 36; 23: 15 27 37; 24: 16 28 36; 25: 13 30
    35; 26: 12 29 34; 27: 19 23 32; 28: 18 24 33; 29: 26 31
    35; 30: 25 31 34; 31: 29 30 33; 32: 20 27 40; 33: 28 31
    41; 34: 26 30 39; 35: 25 29 39; 36: 22 24 38; 37: 21 23
    38; 38: 3 36 37; 39: 2 34 35; 40: 1 32 43; 41: 1 33 42;
    42: 3 41 43; 43: 2 40 42\} \quad (Class 5a)

\item \{0: 1 2 3; 1: 0 40 41; 2: 0 39 43; 3: 0 38 42; 4: 9
    13 17; 5: 8 12 17; 6: 7 15 19; 7: 6 16 18; 8: 5 9 14;
    9: 4 8 11; 10: 13 16 22; 11: 9 12 20; 12: 5 11 26; 13:
    4 10 25; 14: 8 15 21; 15: 6 14 23; 16: 7 10 24; 17: 4 5
    20; 18: 7 22 28; 19: 6 21 27; 20: 11 17 32; 21: 14 19
    37; 22: 10 18 36; 23: 15 27 37; 24: 16 28 36; 25: 13 31
    35; 26: 12 29 33; 27: 19 23 32; 28: 18 24 34; 29: 26 30
    35; 30: 29 31 34; 31: 25 30 33; 32: 20 27 40; 33: 26 31
    41; 34: 28 30 39; 35: 25 29 39; 36: 22 24 38; 37: 21 23
    38; 38: 3 36 37; 39: 2 34 35; 40: 1 32 43; 41: 1 33 42;
    42: 3 41 43; 43: 2 40 42\} \quad (Class 6)

\item \{0: 1 2 3; 1: 0 38 39; 2: 0 41 43; 3: 0 40 42; 4: 5
    6 37; 5: 4 30 36; 6: 4 29 35; 7: 8 9 22; 8: 7 15 21; 9:
    7 14 20; 10: 14 18 31; 11: 15 19 32; 12: 14 17 34; 13:
    15 16 33; 14: 9 10 12; 15: 8 11 13; 16: 13 26 32; 17:
    12 25 31; 18: 10 25 34; 19: 11 26 33; 20: 9 21 24; 21:
    8 20 23; 22: 7 23 24; 23: 21 22 28; 24: 20 22 27; 25:
    17 18 28; 26: 16 19 27; 27: 24 26 30; 28: 23 25 29; 29:
    6 28 36; 30: 5 27 35; 31: 10 17 39; 32: 11 16 39; 33:
    13 19 43; 34: 12 18 42; 35: 6 30 38; 36: 5 29 38; 37: 4
    40 41; 38: 1 35 36; 39: 1 31 32; 40: 3 37 43; 41: 2 37
    42; 42: 3 34 41; 43: 2 33 40\} \quad (Class 5b)

\item \{0: 1 2 3; 1: 0 39 42; 2: 0 38 43; 3: 0 40 41; 4: 7
    10 18; 5: 8 9 15; 6: 9 12 16; 7: 4 13 17; 8: 5 10 12;
    9: 5 6 11; 10: 4 8 11; 11: 9 10 21; 12: 6 8 21; 13: 7
    14 30; 14: 13 19 22; 15: 5 20 24; 16: 6 26 29; 17: 7 22
    27; 18: 4 25 28; 19: 14 20 23; 20: 15 19 32; 21: 11 12
    35; 22: 14 17 33; 23: 19 31 35; 24: 15 31 36; 25: 18 29
    37; 26: 16 28 37; 27: 17 30 36; 28: 18 26 34; 29: 16 25
    34; 30: 13 27 33; 31: 23 24 32; 32: 20 31 38; 33: 22 30
    39; 34: 28 29 39; 35: 21 23 41; 36: 24 27 38; 37: 25 26
    40; 38: 2 32 36; 39: 1 33 34; 40: 3 37 43; 41: 3 35 42;
    42: 1 41 43; 43: 2 40 42\} \quad (Class 6)

\end{enumerate}

\end{document}